\newtheorem{thm}{Theorem}[section]
\newtheorem{cor}[thm]{Corollary}  
\newtheorem{prop}[thm]{Proposition}
\newtheorem{lemma}[thm]{Lemma}
\newtheorem{note}[thm]{Note}
\newtheorem{definition}[thm]{Definition}
\newtheorem{assump}[thm]{Assumption}
\renewenvironment{proof}{\medskip\noindent{\emph {Proof:}\ }}{\qed \medskip} 
\title{ \bf Some $q$-exponential formulas involving the double lowering operator $\psi$ for a tridiagonal pair}
\author{Sarah Bockting-Conrad\thanks{This research was partially supported by a grant from the College of Science and Health at DePaul University.}}
\date{} 
\begin{document}

\maketitle

\vspace{-.15 in}
\begin{abstract}
Let $\mathbb{K}$ denote an algebraically closed field and let $V$ denote a vector space over $\mathbb{K}$ with finite positive dimension.  
We consider an ordered pair of linear transformations $A: V\to V$ and $A^*: V \to V$ that satisfy the following four conditions:  
(i) Each of $A,A^*$ is diagonalizable;
(ii) there exists an ordering $\{ V_i\}_{i=0}^d$ of the eigenspaces of $A$ such that 
$A^* V_i \subseteq V_{i-1}+V_i+V_{i+1}
$ for $0\leq i\leq d$, where $V_{-1}=0$ and $V_{d+1}=0$;
(iii) there exists an ordering $\{V^*_i\}_{i=0}^{\delta}$ of the eigenspaces of $A^*$ such that 
$A V^*_i \subseteq V^*_{i-1}+V^*_i+V^*_{i+1}
$ for $0\leq i\leq \delta$, where $V^*_{-1}=0$ and $V^*_{\delta+1}=0$;
(iv) there does not exist a subspace $W$ of $V$ such that $AW \subseteq W$, $A^*W\subseteq W$, $W\neq 0$, $W\neq V$.  
We call such a pair a {\it tridiagonal pair} on $V$.  
We assume that $A,A^*$ belongs to a family of tridiagonal pairs said to have 
$q$-Racah type.  
Let $\{U_i\}_{i=0}^d$ and $\{U_i^\Downarrow\}_{i=0}^{d}$ denote the first and second split decompositions of $V$.  
In an earlier paper we introduced a double lowering operator $\psi:V\to V$ with the notable feature that both 
$\psi U_i\subseteq U_{i-1}$ 
and 
$\psi U_i^\Downarrow\subseteq U_{i-1}^\Downarrow$ for $0\leq i\leq d$, where $U_{-1}=0$ and $U_{-1}^\Downarrow=0$.  
In the same paper, we showed that there exists a unique linear transformation $\Delta:V\to V$ such that $\Delta(U_i)\subseteq U_i^{\Downarrow}$ and 
$(\Delta -I)U_i\subseteq U_0+U_1+\cdots +U_{i-1}$
for $0\leq i \leq d$.  
In the present paper, we show that $\Delta$ can be expressed as a product of two linear transformations; one is a $q$-exponential in $\psi$ and the other is a $q^{-1}$-exponential in $\psi$. We view $\Delta$ as a transition matrix from the first split decomposition of $V$ to the second. Consequently, we view the $q^{-1}$-exponential in $\psi$ as a transition matrix from the first split decomposition to a decomposition of $V$ which we interpret as a kind of halfway point.
This halfway point turns out to be the eigenspace decomposition of a certain linear transformation $\mathcal{M}$. We discuss the eigenspace decomposition of $\mathcal{M}$ and give the actions of various operators on this decomposition.

\bigskip
\noindent
{\bf Keywords}. Tridiagonal pair, Leonard pair, quantum group $U_q(\mathfrak{sl}_2)$, quantum algebra, $q$-Racah polynomial, $q$-Serre relations, $q$-exponential function.
 \hfil\break
\noindent {\bf 2010 Mathematics Subject Classification}. Primary: 15A21.  Secondary: 05E30.
\end{abstract}

%%%%%%%%%%%%%%%
%%%%%%%%%%%%%%%
%%%%%%%%%%%%%%%
\section{Introduction}
%%%%%%%%%%%%%%%
%%%%%%%%%%%%%%%
%%%%%%%%%%%%%%%

Throughout this paper, $\mathbb{K}$ denotes an algebraically closed field.
We begin by recalling the notion of a tridiagonal pair.
We will use the following terms.  Let $V$ denote a vector space over $\mathbb{K}$ with finite positive dimension.  For a linear transformation $A:V\to V$ and a subspace $W\subseteq V$, we say that $W$ is an {\it eigenspace} of $A$ whenever $W\neq 0$ and there exists $\theta\in\mathbb{K}$ such that $W=\{ v\in V | Av=\theta v\}$.  In this case, $\theta$ is called the {\it eigenvalue} of $A$ associated with $W$.  We say that $A$ is {\it diagonalizable} whenever $V$ is spanned by the eigenspaces of $A$.

\begin{definition}  {\rm \cite[Definition 1.1]{Somealg}}. \label{def:tdp}
{\rm Let $V$ denote a vector space over $\mathbb{K}$ with finite positive dimension. 
By a {\em tridiagonal pair} (or {\em TD pair}) on $V$ we mean an ordered pair of linear 
transformations $A:V \to V$ and $A^*:V \to V$ that satisfy the following 
four conditions.
\begin{enumerate}
\item[{\rm (i)}] 
Each of $A,A^*$ is diagonalizable.
\item[{\rm (ii)}] 
There exists an ordering $\{V_i\}_{i=0}^d$ of the eigenspaces of $A$ 
such that 
\begin{equation}               \label{eq:t1}
A^* V_i \subseteq V_{i-1} + V_i+ V_{i+1} \qquad \qquad (0 \leq i \leq d),
\end{equation}
where $V_{-1} = 0$ and $V_{d+1}= 0$.
\item[{\rm (iii)}]
There exists an ordering $\{V^*_i\}_{i=0}^{\delta}$ of the eigenspaces of 
$A^*$ such that 
\begin{equation}                \label{eq:t2}
A V^*_i \subseteq V^*_{i-1} + V^*_i+ V^*_{i+1} 
\qquad \qquad (0 \leq i \leq \delta),
\end{equation}
where $V^*_{-1} = 0$ and $V^*_{\delta+1}= 0$.
\item [{\rm (iv)}]
There does not exist a subspace $W$ of $V$ such  that $AW\subseteq W$,
$A^*W\subseteq W$, $W\not=0$, $W\not=V$.
\end{enumerate}
We say the pair $A,A^*$ is {\em over $\mathbb{K}$}.}
\end{definition}

\begin{note}   \label{note:star}        \samepage
{\rm According to a common notational convention $A^*$ denotes 
the conjugate-transpose of $A$. We are not using this convention.
In a TD pair $A,A^*$ the linear transformations $A$ and $A^*$
are arbitrary subject to (i)--(iv) above.}
\end{note}

Referring to the TD pair in Definition \ref{def:tdp}, by  \cite[Lemma 4.5]{Somealg} 
the scalars 
 $d$ and $\delta $ are equal.
We call this common value the {\it diameter} of $A,A^*$.
To avoid trivialities, throughout this paper we assume that the diameter is at least one.\\

TD pairs first arose in the study of $Q$-polynomial distance-regular graphs and provided a way to study the irreducible modules of the Terwilliger algebra associated with such a graph.  Since their introduction, TD pairs have been found to appear naturally in a variety of other contexts including representation theory \cite{TDfamily, neubauer, td-uqsl2, qtetalgebra, IT:qRacah, Koornwinder, Koornwinder2, aw}, orthogonal polynomials \cite{Askeyscheme, 2LT-PA}, partially ordered sets \cite{LPintro}, statistical mechanical models \cite{baseilhac, dolangrady, Onsager}, and other areas of physics \cite{LPcm, odake}.  As a result, TD pairs have become an area of interest in their own right.  
Among the above papers on representation theory, there are several works that connect TD pairs to quantum groups \cite{TDfamily, bockting2, neubauer, td-uqsl2,   IT:qRacah}.  
These papers consider certain special classes of TD pairs.  
We call particular attention to \cite{bockting2}, in which the present author describes a new relationship between TD pairs in the $q$-Racah class and quantum groups.  The present paper builds off of this work. \\

In the present paper, we give a new relationship between the maps $\Delta,\psi:V\to V$ introduced in \cite{bockting1}, as well as 
describe a new decomposition of the underlying vector space that, in some sense, lies between the first and second split decompositions associated with a TD pair.
In order to motivate our results, we now recall some basic facts concerning TD pairs.  
For the rest of this section, let $A,A^*$ denote a TD pair on $V$, as in Definition \ref{def:tdp}.  Fix an ordering $\{V_i\}_{i=0}^d$ (resp. $\{V_i^*\}_{i=0}^d$) of the eigenspaces of $A$ (resp. $A^*$) which satisfies (\ref{eq:t1}) (resp. (\ref{eq:t2})).  For $0\leq i\leq d$ let $\theta_i$ (resp. $\theta_i^*$) denote the eigenvalue of $A$ (resp. $A^*$) corresponding to $V_i$ (resp. $V_i^*$).   
By \cite[Theorem 11.1]{Somealg} the ratios
\begin{equation*}
\frac{\theta_{i-2}-\theta_{i+1}}{\theta_{i-1}-\theta_i},\qquad\qquad \frac{\theta_{i-2}^*-\theta_{i+1}^*}{\theta_{i-1}^*-\theta_i^*}
\end{equation*}
are equal and independent of $i$ for $2\leq i\leq d-1$.  This gives two recurrence relations, whose solutions can be written in closed form.  
There are several cases \cite[Theorem 11.2]{Somealg}.  
The most general case is called the $q$-Racah case \cite[Section 1]{IT:qRacah}.  We will discuss this case shortly.\\

We now recall the split decompositions of $V$ \cite{Somealg}.  
For $0\leq i\leq d$ define
\begin{align*}
U_i&= (V^*_0+V^*_1+\cdots + V^*_i)\cap (V_i+V_{i+1}+\cdots + V_d),\\
U_i^{\Downarrow} &= (V^*_0+V^*_1+\cdots + V^*_i)\cap (V_0+V_1+\cdots + V_{d-i}).
\end{align*}
By \cite[Theorem 4.6]{Somealg},
both the sums $V=\sum_{i=0}^d U_i$ and $V=\sum_{i=0}^d U_i^{\Downarrow}$ are direct.  We call $\{U_i\}_{i=0}^d$ (resp. $\{U_i^{\Downarrow}\}_{i=0}^d$) the first split decomposition (resp.  second split decomposition) of $V$.  
In \cite{Somealg}, the authors showed that $A,A^*$ act on the first and second split decomposition in a particularly attractive way.  This will be described in more detail in Section \ref{section:U}. \\

We now describe the $q$-Racah case.  
We say that the TD pair $A,A^*$ has {\it $q$-Racah type} whenever
 there exist nonzero scalars $q,a,b\in\mathbb{K}$ 
 such that $q^4\neq 1$ and 
\begin{equation*}
\theta_i=aq^{d-2i}+a^{-1}q^{2i-d},\qquad\qquad
\theta_i^*=bq^{d-2i}+b^{-1}q^{2i-d}
\end{equation*}
for $0\leq i\leq d$.
For the rest of this section assume that $A,A^*$ has $q$-Racah type.\\

We recall the maps $K$ and $B$ \cite[Section 1.1]{augTDalg}.  
Let $K:V\to V$ denote the linear transformation such that for $0\leq i \leq d$, $U_i$ is an eigenspace of $K$ with eigenvalue $q^{d-2i}$.  Let $B:V\to V$ denote the linear transformation such that for $0\leq i \leq d$, $U_i^{\Downarrow}$ is an eigenspace of $B$ with eigenvalue $q^{d-2i}$.  
The relationship between $K$ and $B$ is discussed in considerable detail in \cite{bockting2}.  \\

We now bring in the linear transformation $\Psi:V\to V$ \cite[Lemma 11.1]{bockting1}.  As in \cite{bockting2}, we work with the normalization $\psi=(q-q^{-1})(q^d-q^{-d})\Psi$.  
A key feature of $\psi$ is that by 
\cite[Lemma 11.2, Corollary 15.3]{bockting1}, 
\begin{equation*}
\psi U_i\subseteq U_{i-1},\qquad\qquad \psi U_i^\Downarrow\subseteq U_{i-1}^\Downarrow
\end{equation*}
for $1\leq i\leq d$ and both $\psi U_0=0$ and $\psi U_0^\Downarrow=0$.  In \cite{bockting2}, it is shown how $\psi$ is related to several maps, including the maps $K,B$, as well as the map $\Delta$ which we now recall.  By \cite[Lemma 9.5]{bockting1}, 
there exists a unique linear transformation $\Delta:V\to V$ such that
\begin{align*}
&\Delta U_i\subseteq U_i^{\Downarrow} &\ (0\leq i \leq d),\\
&(\Delta -I)U_i\subseteq U_0+U_1+\cdots +U_{i-1} &\ (0\leq i \leq d).
\end{align*}
In \cite[Theorem 17.1]{bockting1}, the present author showed that both
\begin{align*}
\Delta &= \sum_{i=0}^d \left(\prod_{j=1}^i \frac{aq^{j-1}-a^{-1}q^{1-j}}{q^j-q^{-j}}\right)\psi^i, \qquad\qquad
\Delta^{-1} &=\sum_{i=0}^d \left(\prod_{j=1}^i \frac{a^{-1}q^{j-1}-aq^{1-j}}{q^j-q^{-j}}\right)\psi^i. 
\end{align*}
The primary goal of this paper is to provide factorizations of these power series in $\psi$ and to investigate the consequences of these factorizations.  
We accomplish this goal using a linear transformation $\mathcal{M}:V\to V$ given by
\begin{equation*}
	\mathcal{M} =\frac{aK-a^{-1}B}{a-a^{-1}}.
\end{equation*}
By construction, $\mathcal{M}^\Downarrow=\mathcal{M}$.  One can quickly check that $\mathcal{M}$ is invertible. 
We show that the map $\mathcal{M}$ is equal to each of 
\begin{align*}
(I-a^{-1}q\psi)^{-1}K, \qquad K(I-a^{-1}q^{-1}\psi)^{-1}, \qquad (I-aq\psi)^{-1}B, \qquad B(I-aq^{-1}\psi)^{-1}.
\end{align*}
We give a number of different relations involving the maps $\mathcal{M},K,B,\psi$, the most significant of which are 
the following:
\begin{align*}
&K \exp_q\left(\frac{a^{-1}}{q-q^{-1}}\psi\right)=\exp_q\left(\frac{a^{-1}}{q-q^{-1}}\psi\right) \mathcal{M},\\
&B \exp_q\left(\frac{a}{q-q^{-1}}\psi\right)=\exp_q\left(\frac{a}{q-q^{-1}}\psi\right) \mathcal{M}.
\end{align*}
Using these equations, we obtain our main result which is that both
\begin{align*}
\Delta&=\exp_{q}\left(\frac{a}{q-q^{-1}}\psi\right)\exp_{q^{-1}}\left(-\frac{a^{-1}}{q-q^{-1}}\psi\right),\\
\Delta^{-1}&=\exp_{q}\left(\frac{a^{-1}}{q-q^{-1}}\psi\right)\exp_{q^{-1}}\left(-\frac{a}{q-q^{-1}}\psi\right).
\end{align*}

Due to its important role in the factorization of $\Delta$, we explore the map $\mathcal{M}$ further.  
We show that $\mathcal{M}$ is diagonalizable with eigenvalues $q^{d},q^{d-2},q^{d-4},\mathellipsis,q^{-d}$.  
For $0\leq i\leq d$, let $W_i$ denote the eigenspace of $\mathcal{M} $ corresponding to the eigenvalue $q^{d-2i}$.  We show that 
for $0\leq i\leq d$, 
\begin{align*}
&U_i=\exp_q\left(\frac{a^{-1}}{q-q^{-1}}\psi\right) W_i,&&U_i^\Downarrow=\exp_q\left(\frac{a}{q-q^{-1}}\psi\right) W_i,\\
&W_i=\exp_{q^{-1}}\left(-\frac{a^{-1}}{q-q^{-1}}\psi\right) U_i, 
&&W_i=\exp_{q^{-1}}\left(-\frac{a}{q-q^{-1}}\psi\right) U_i^\Downarrow.
\end{align*}
In light of this result, we interpret the decomposition $\{W_i\}_{i=0}^d$ as a sort of halfway point between the first and second split decompositions.  
We explore this decomposition further and give the actions of $\psi, K,B,\Delta,A,A^*$ on $\{W_i\}_{i=0}^d$.  
We then give the actions of $\mathcal{M}^{\pm 1}$ on $\{U_i\}_{i=0}^d,\{U_i^\Downarrow\}_{i=0}^d, \{V_i\}_{i=0}^d, \{V_i^*\}_{i=0}^d$.  We conclude the paper with a discussion of the special case when $A,A^*$ is a Leonard pair. \\

The present paper is organized as follows.  
In Section \ref{section:prelim} we discuss some preliminary facts concerning TD pairs and TD systems.  
In Sections \ref{section:U} we discuss the split decompositions of $V$ as well as the maps $K$ and $B$.  
In Section \ref{section:psi} we discuss the map $\psi$.
In Section \ref{section:delta} we recall the map $\Delta$ and give $\Delta$ as a power series in $\psi$. 
In Section \ref{section:M} we introduce the map $\mathcal{M}$ and describe its relationship with $A,K,B,\psi$. 
In Section \ref{section:factorization} we express $\Delta$ as a product of two linear transformations; one is a $q$-exponential in $\psi$ and the other is a $q^{-1}$-exponential in $\psi$.
In Section \ref{section:W} we describe the eigenvalues and eigenspaces of $\mathcal{M}$ and discuss how the eigenspace decomposition of $\mathcal{M}$ is related to the first and second split decompositions. 
In Section \ref{section:actionW} we discuss the actions of $\psi, K,B,\Delta,A,A^*$ on the eigenspace decomposition of $\mathcal{M}$. 
In Section \ref{section:Maction} we describe the action of $\mathcal{M} $ on the first and second split decompositions of $V$, as well as on the eigenspace decompositions of $A,A^*$.
In Section \ref{section:LS} we consider the case when $A,A^*$ is a Leonard pair.

%%%%%%%%%%%%%%%
%%%%%%%%%%%%%%%
%%%%%%%%%%%%%%%
\section{Preliminaries}\label{section:prelim}
%%%%%%%%%%%%%%%
%%%%%%%%%%%%%%%
%%%%%%%%%%%%%%%

When working with a tridiagonal pair, it is useful to consider a closely related object called a tridiagonal system. In order to define this object, we first recall some facts from elementary linear algebra \cite[Section 2]{Somealg}.\\

We use the following conventions.  When we discuss an algebra, we mean a unital associative algebra. When we discuss a subalgebra, we assume that it has the same unit as the parent algebra.\\

Let $V$ denote a vector space over $\mathbb{K}$ with finite positive dimension. 
By a {\it decomposition} of $V,$ we mean a sequence of nonzero subspaces whose direct sum is $V$.  
Let $ {\rm End} (V)$ denote the $\mathbb{K}$-algebra consisting of all linear transformations from $V$ to $V$.
Let $A$ denote a diagonalizable element in ${\rm End}(V)$.  
Let $\{V_i\}_{i=0}^d$ denote an ordering of the eigenspaces of $A$.
For $0\leq i\leq d$ let $\theta_i$ be the eigenvalue of $A$ corresponding to $V_i$.
Define $E_i\in {\rm End}(V)$ by
$ (E_i-I)V_i=0$ and 
$E_iV_j=0$ if $j\neq i$  $(0\leq j\leq d)$.
In other words, $E_i$ is the projection map from $V$ onto $V_i$.  
We refer to $E_i$ as the {\it primitive idempotent} of
$A$ associated with $\theta_i$.
By elementary linear algebra, 
(i) $AE_i = E_iA = \theta_iE_i$   $(0 \leq i \leq d)$;
(ii) $E_iE_j = \delta_{ij}E_i$   $(0 \leq i,j\leq d)$;
(iii) $V_i=E_iV$   $(0 \leq i \leq d)$;
(iv) $I=\sum_{i=0}^d E_i$.
Moreover 
\begin{align*}
E_i = \prod_{\genfrac{}{}{0 pt}{}{0 \leq  j \leq d}{j\not=i}} \frac{A-\theta_j I}{\theta_i-\theta_j}\qquad \qquad (0 \leq i \leq d).\label{EA} 
\end{align*}

Let $M$ denote the subalgebra of ${\rm End}(V)$ generated by $A$.  
Note that each of $\{ A^i\}_{i=0}^d$, $\{E_i\}_{i=0}^d$ is a basis for the $\mathbb{K}$-vector space $M$.\\  

Let $A,A^*$ denote a TD pair on $V$.  An ordering of the eigenspaces of $A$ (resp. $A^*$) is said to be {\it standard} whenever it satisfies (\ref{eq:t1}) (resp. (\ref{eq:t2})).  
Let $\{V_i\}_{i=0}^d$ denote a standard ordering of the eigenspaces of $A$.  By \cite[Lemma 2.4]{Somealg}, the ordering $\{V_{d-i}\}_{i=0}^d$ is standard and no further ordering of the eigenspaces of $A$ is standard.  A similar result holds for the eigenspaces of $A^*$.  An ordering of the primitive idempotents of $A$ (resp. $A^*$) is said to be {\it standard} whenever the corresponding ordering of the eigenspaces of $A$ (resp. $A^*$) is standard.

\begin{definition}{\rm \cite[Definition 2.1]{TDclass}.} \label{def:TDsys} {\rm
Let $V$ denote a vector space over $\mathbb{K}$ with finite positive dimension.  
By a {\it tridiagonal system} (or {\em TD system}) on $V,$  we mean a 
sequence 
\begin{equation*}
\Phi = (A; \{ E_i\}_{i=0}^d; A^*;\{ E_i^*\}_{i=0}^d)
\end{equation*}
 that satisfies  (i)--(iii) below.
\begin{enumerate}
\item[{\rm (i)}] $A, A^*$ is a tridiagonal pair on $V$. 
\item [{\rm (ii)}]$\{ E_i\}_{i=0}^d$ is a standard ordering of the primitive 
idempotents of $\;A$.
\item[{\rm (iii)}] $\{ E_i^*\}_{i=0}^d$ is a standard ordering of the primitive 
idempotents of $\;A^*$.
\end{enumerate}
We call $d$ the {\it diameter} of $\Phi$,  
and say $\Phi$ is {\it over } $\mathbb{K}$. 
 For notational convenience, set $E_{-1}=0$, $E_{d+1}=0$, $
E^*_{-1}=0$, $E^*_{d+1}=0$.}
\end{definition}

In Definition \ref{def:TDsys} we do not assume that the primitive idempotents $\{ E_i\}_{i=0}^d,\{ E_i^*\}_{i=0}^d$ all 
have rank 1.  A TD system for which each of these primitive idempotents has rank 1 is called a Leonard system \cite{2LT}.
The Leonard systems  are classified up to isomorphism \cite[Theorem 1.9]{2LT}.\\

For the rest of this paper, fix a TD system $\Phi$ on $V$ as in Definition \ref{def:TDsys}.
Our TD system $\Phi$ can be modified in a number of ways to get a new TD system \cite[Section 3]{Somealg}.  For example, the sequence
\begin{equation*}
\Phi^{\Downarrow} = (A;\{ E_{d-i}\}_{i=0}^d;A^*;\{ E_i^*\}_{i=0}^d)
\end{equation*}
is a TD system on $V$. 
Following  {\rm \cite[Section 3]{Somealg}}, we call $\Phi^{\Downarrow}$ the {\it second inversion} of $\Phi$.
When discussing $\Phi^{\Downarrow}$, we use the following notational convention.
For any object $f$ associated with $\Phi$, let $f^{\Downarrow}$ denote the corresponding object associated with $\Phi^{\Downarrow}$.

\begin{definition}\label{def:main}{\rm
For $0\leq i\leq d$ let $\theta_i$ (resp. $\theta_i^*$) denote the eigenvalue of $A$ (resp. $A^*$) associated with $E_i$ (resp. $E_i^*$).
We refer to $\{\theta_i\}_{i=0}^d$ (resp. $\{\theta_i^*\}_{i=0}^d$) as the {\it eigenvalue sequence} (resp. {\it dual eigenvalue sequence}) of $\Phi$.  
}\end{definition}

By construction $\{\theta_i\}_{i=0}^d$ are mutually distinct and $\{\theta_i^*\}_{i=0}^d$ are mutually distinct.
  By \cite[Theorem 11.1]{Somealg}, the scalars
\begin{equation*}
\frac{\theta_{i-2}-\theta_{i+1}}{\theta_{i-1}-\theta_i},\qquad\qquad \frac{\theta_{i-2}^*-\theta_{i+1}^*}{\theta_{i-1}^*-\theta_i^*}\label{eq:intro-ratio}
\end{equation*}
are equal and independent of $i$ for $2\leq i\leq d-1$.  
For this restriction, the solutions have been found in closed form \cite[Theorem 11.2]{Somealg}.  
The most general solution is called $q$-Racah \cite[Section 1]{IT:qRacah}.  This solution is described as follows.  

\begin{definition}\label{def:racah}
{\rm Let $\Phi$ denote a TD system on $V$ as in Definition \ref{def:TDsys}.  
We say that  $\Phi$ has \emph{$q$-Racah type} whenever there exist nonzero scalars $q,a,b\in\mathbb{K}$ such that 
such that $q^4\neq 1$ and 
\begin{equation}
\theta_i=aq^{d-2i}+a^{-1}q^{2i-d},\qquad\qquad
\theta_i^*=bq^{d-2i}+b^{-1}q^{2i-d}
\label{eq:theta-assump}
\end{equation}
for $0\leq i\leq d$.  }
\end{definition}

\begin{note}{\rm
Referring to Definition \ref{def:racah}, the scalars $q,a,b$ are not uniquely defined by $\Phi$.  If $q,a,b$ is one solution, then their inverses give another solution.  
}\end{note}

For the rest of the paper, we make the following assumption.
\begin{assump}\label{assump:main}
We assume that our TD system $\Phi$ has $q$-Racah type. We fix $q,a,b$ as in Definition \ref{def:racah}.
\end{assump}

\begin{lemma}{\rm \cite[Lemma 2.4]{bockting2}.}\label{note:main}
With reference to Assumption \ref{assump:main}, the following hold.
\begin{itemize}
\item[{\rm (i)}] Neither of $a^2$, $b^2$ is among $q^{2d-2},q^{2d-4},\mathellipsis, q^{2-2d}$.
\item[{\rm (ii)}] $q^{2i}\neq 1$ for $1\leq i\leq d$.
\end{itemize}
\end{lemma}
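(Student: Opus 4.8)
The plan is to derive both parts directly from the mutual distinctness of the eigenvalue sequence $\{\theta_i\}_{i=0}^d$ and the dual eigenvalue sequence $\{\theta_i^*\}_{i=0}^d$, which holds by construction. The key computation is to factor the difference of two eigenvalues. Using the $q$-Racah formula (\ref{eq:theta-assump}), I would show that for $0 \leq i, j \leq d$,
\begin{equation*}
\theta_i - \theta_j = (q^{-2i} - q^{-2j})\,(aq^d - a^{-1}q^{2i+2j-d}),
\end{equation*}
which follows by writing $\theta_i = aq^d q^{-2i} + a^{-1}q^{-d}q^{2i}$, using $q^{2i} - q^{2j} = -q^{2i+2j}(q^{-2i}-q^{-2j})$, and factoring out $q^{-2i}-q^{-2j}$. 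An identical computation with $b$ in place of $a$ gives the analogous factorization of $\theta_i^* - \theta_j^*$.

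Next I would fix $i \neq j$. Since the $\theta_i$ are mutually distinct, the left-hand side above is nonzero, so both factors on the right are nonzero. The first factor $q^{-2i} - q^{-2j}$ vanishes exactly when $q^{2(j-i)} = 1$; as $(i,j)$ ranges over pairs with $0 \leq i < j \leq d$ the difference $j-i$ realizes every value in $\{1, 2, \dots, d\}$, so nonvanishing of the first factor across all such pairs yields $q^{2k} \neq 1$ for $1 \leq k \leq d$, which is part (ii). The second factor $aq^d - a^{-1}q^{2i+2j-d}$ vanishes exactly when $a^2 = q^{2(i+j)-2d}$; as $0 \leq i < j \leq d$ the sum $i+j$ realizes every integer in $\{1, 2, \dots, 2d-1\}$, so the exponent $2(i+j)-2d$ runs through $\{2-2d, 4-2d, \dots, 2d-2\}$. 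Hence nonvanishing of the second factor shows that $a^2$ avoids all of $q^{2d-2}, q^{2d-4}, \dots, q^{2-2d}$, giving part (i) for $a^2$. Applying the same reasoning to the factorization of $\theta_i^* - \theta_j^*$, now using the distinctness of $\{\theta_i^*\}_{i=0}^d$, yields the statement for $b^2$.

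The only point requiring care is the bookkeeping over index ranges: one must verify that $j-i$ realizes all of $\{1,\dots,d\}$ and that $i+j$ realizes all of $\{1,\dots,2d-1\}$ as $(i,j)$ ranges over pairs with $0 \leq i < j \leq d$. Both are immediate — take $(0,k)$ for the former, and take $(0,s)$ for $1 \leq s \leq d$ together with $(s-d,d)$ for $d+1 \leq s \leq 2d-1$ for the latter. I do not expect a genuine obstacle here: the entire content of the lemma is captured by the single factorization identity, and the remainder is elementary once the distinctness of the (dual) eigenvalues is invoked. Accordingly I would present the factorization first and then read off (ii) and (i) in turn.
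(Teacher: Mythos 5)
Your proposal is correct and is exactly the argument the paper intends: the paper's proof is the one-line remark that the result "follows from the comment below Definition \ref{def:main}" (i.e., from the mutual distinctness of $\{\theta_i\}_{i=0}^d$ and $\{\theta_i^*\}_{i=0}^d$), and your factorization $\theta_i-\theta_j=(q^{-2i}-q^{-2j})(aq^d-a^{-1}q^{2i+2j-d})$ together with the index bookkeeping is precisely the computation being left to the reader. No gaps.
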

\begin{proof}
The result follows from the comment below Definition \ref{def:main}.
\end{proof}

%%%%%%%%%%%%%%%
%%%%%%%%%%%%%%%
%%%%%%%%%%%%%%%
\section{The first and second split decomposition of $V$}\label{section:U}
%%%%%%%%%%%%%%%
%%%%%%%%%%%%%%%
%%%%%%%%%%%%%%%

Recall the TD system $\Phi$ from Assumption \ref{assump:main}.  
In this section we consider two decompositions of $V$ associated with $\Phi$, called the first and second split decomposition.\\    

For $0\leq i\leq d$ define 
\begin{align*}
U_i = (E^*_0V+E^*_1V+\cdots + E^*_iV)\cap (E_iV+E_{i+1}V+\cdots + E_dV).
\end{align*}
For notational convenience, define $U_{-1}=0$ and $U_{d+1}=0$.
Note that for $0\leq i\leq d$, 
\begin{equation*}
U_i^{\Downarrow}=(E_0^*V+E_1^*V +\cdots + E_i^* V)\cap (E_0V+E_1V+\cdots + E_{d-i} V).
\end{equation*}
By \cite[Theorem 4.6]{Somealg},  
the sequence $\{U_i\}_{i=0}^d$  (resp. $\{U_i^\Downarrow\}_{i=0}^d$) is a decomposition of $V$.  Following \cite{Somealg}, we refer to $\{U_i\}_{i=0}^d$  (resp. $\{U_i^\Downarrow\}_{i=0}^d$) as the {\it first split decomposition} (resp. {\it second split decomposition}) of $V$ with respect to $\Phi$.  
By \cite[Corollary 5.7]{Somealg}, for $0\leq i\leq d$ the dimensions of
$E_iV$, $E_i^*V$, $U_i$, $U_i^\Downarrow$ coincide; we denote the common dimension by $\rho_i$.  
By \cite[Theorem 4.6]{Somealg}, 
\begin{align}
E_iV+E_{i+1}V+\cdots+E_dV&=U_{i}+U_{i+1}+\cdots+U_d,\label{eq:EU}\\
E_0V+E_1V+\cdots+E_iV&=U_{d-i}^\Downarrow+U_{d-i+1}^\Downarrow+\cdots+U_d^\Downarrow,\label{eq:EUdd}\\
E_0^*V+E_1^*V+\cdots E_i^*V=U_0+U_1&+\cdots +U_i=U_0^\Downarrow+U_1^\Downarrow+\cdots +U_i^\Downarrow. \label{eq:UUdd}
\end{align}
By \cite[Theorem 4.6]{Somealg}, $A$ and $A^*$ act on the first split decomposition in the following way: 
\begin{align*}
&(A-\theta_i I)U_i\subseteq U_{i+1} &(0\leq i \leq d-1 ), \qquad &(A-\theta_d I)U_d=0,\\
&(A^*-\theta_i^* I)U_i\subseteq U_{i-1} &(1\leq i\leq d ), \qquad &(A^*-\theta_0^* I)U_0=0.
\end{align*}
By \cite[Theorem 4.6]{Somealg}, $A$ and $A^*$ act on the second split decomposition in the following way:
\begin{align*}
&(A-\theta_{d-i} I)U_i^{\Downarrow}\subseteq U_{i+1}^{\Downarrow}&(0\leq i\leq d-1 ), \qquad &(A-\theta_0 I)U_d^{\Downarrow}=0,\\
&(A^*-\theta_i^* I)U_i^{\Downarrow}\subseteq U_{i-1}^{\Downarrow}&(1\leq i\leq d ), \qquad &(A^*-\theta_0^* I)U_0^{\Downarrow}=0.
\end{align*}

\begin{definition}\label{def:KB}{\rm \cite[Definitions 3.1 and 3.2]{bockting2}. Define $K,B\in {\rm End}(V)$ such that for $0\leq i\leq d$, $U_i$ (resp. $U_i^\Downarrow$) is the eigenspace of $K$ (resp. $B$) with eigenvalue $q^{d-2i}$.  In other words,
\begin{align}
(K-q^{d-2i}I)U_i=0, \qquad \qquad(B-q^{d-2i}I)U_i^\Downarrow=0 \qquad\qquad (0\leq i\leq d).\label{eq:Kdef}
\end{align}
Observe that $B=K^{\Downarrow}$.  
}\end{definition}

By construction each of $K, B$ is invertible and diagonalizable on $V$.  \\

We now describe how $K$ and $B$ act on the eigenspaces of the other one.

\begin{lemma}{\rm \cite[Lemma 3.3]{bockting2}.}\label{lemma:KUdd}
For $0\leq i\leq d$,
\begin{align}
(B-q^{d-2i}I)U_i&\subseteq U_0+U_1+\cdots +U_{i-1},\label{eq:BU}\\
(K-q^{d-2i}I)U_i^\Downarrow &\subseteq U_0^\Downarrow+U_1^\Downarrow+\cdots +U_{i-1}^\Downarrow.\label{eq:KUdd}
\end{align}
\end{lemma}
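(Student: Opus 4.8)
The plan is to prove the first containment (\ref{eq:BU}) directly and then obtain the second containment (\ref{eq:KUdd}) for free by applying the second inversion $\Downarrow$. These two statements are exchanged under $\Downarrow$: since $B=K^{\Downarrow}$ by Definition \ref{def:KB}, since $U_i^{\Downarrow}$ is precisely the object playing the role of $U_i$ for the TD system $\Phi^{\Downarrow}$, and since $\Downarrow$ is an involution (applying it twice returns $\Phi$), the containment (\ref{eq:KUdd}) is nothing but (\ref{eq:BU}) read off for $\Phi^{\Downarrow}$. So essentially all of the work is in (\ref{eq:BU}).

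For (\ref{eq:BU}), the key tool is the common flag relation (\ref{eq:UUdd}), which asserts that $E_0^*V+\cdots+E_i^*V = U_0+\cdots+U_i = U_0^{\Downarrow}+\cdots+U_i^{\Downarrow}$ for each $i$. First I would note that, by its definition as an intersection, $U_i\subseteq E_0^*V+\cdots+E_i^*V$, and hence by (\ref{eq:UUdd}) every $v\in U_i$ decomposes as $v=\sum_{j=0}^{i} v_j$ with $v_j\in U_j^{\Downarrow}$. Since each $U_j^{\Downarrow}$ is an eigenspace of $B$ with eigenvalue $q^{d-2j}$ by Definition \ref{def:KB}, applying $B-q^{d-2i}I$ gives $(B-q^{d-2i}I)v=\sum_{j=0}^{i-1}(q^{d-2j}-q^{d-2i})v_j$, where the $j=i$ summand vanishes. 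The right-hand side lies in $U_0^{\Downarrow}+\cdots+U_{i-1}^{\Downarrow}$, which by (\ref{eq:UUdd}) applied at level $i-1$ equals $U_0+\cdots+U_{i-1}$. This establishes (\ref{eq:BU}).

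The argument is essentially a two-line computation once the common flag relation is in hand, so there is no serious analytic obstacle; the point I would be most careful about is the bookkeeping of the $\Downarrow$ symmetry. Specifically, one should verify explicitly that the object $U_i^{\Downarrow}$ defined in Section \ref{section:U} coincides with the ``$U_i$'' built from $\Phi^{\Downarrow}$, using $E_i^{\Downarrow}=E_{d-i}$ and $E_i^{*\Downarrow}=E_i^*$, so that applying $\Downarrow$ to (\ref{eq:BU}) legitimately yields (\ref{eq:KUdd}). If one prefers to sidestep $\Downarrow$ altogether, the second containment admits the mirror-image proof: start from $U_i^{\Downarrow}\subseteq E_0^*V+\cdots+E_i^*V$, decompose in the first split decomposition $\{U_j\}_{j=0}^d$ via (\ref{eq:UUdd}), and use that $K$ acts on $U_j$ with eigenvalue $q^{d-2j}$ to conclude that the $q^{d-2i}$-eigencomponent drops out.
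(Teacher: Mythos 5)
Your proof is correct. The paper itself does not prove this lemma---it is quoted from \cite[Lemma 3.3]{bockting2} without argument---but your reasoning is the natural one: decompose $v\in U_i$ inside $E_0^*V+\cdots+E_i^*V=U_0^\Downarrow+\cdots+U_i^\Downarrow$ using \eqref{eq:UUdd}, kill the $j=i$ component with $B-q^{d-2i}I$, and convert back via \eqref{eq:UUdd} at level $i-1$; the $\Downarrow$-symmetry (or the mirror computation) then gives \eqref{eq:KUdd}. There is no gap.
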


Next we describe how $A,K,B$ are related. 

\begin{lemma}{\rm \cite[Section 1.1]{augTDalg}}.\label{lemma:AKq-Weyl}
Both
\begin{equation}
\frac{qKA-q^{-1}AK}{q-q^{-1}}=aK^2+a^{-1}I,\qquad\qquad 
\frac{qBA-q^{-1}AB}{q-q^{-1}}=a^{-1}B^2+aI. \label{eq:AKq-Weyl}
\end{equation}
\end{lemma}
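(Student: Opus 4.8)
The plan is to prove the two $q$-Weyl-type relations in \eqref{eq:AKq-Weyl} by exploiting the action of $A$, $K$, $B$ on the first and second split decompositions, together with the relations \eqref{eq:EU}--\eqref{eq:UUdd} that connect these decompositions to the eigenspaces of $A$. I would establish the first relation $\frac{qKA-q^{-1}AK}{q-q^{-1}}=aK^2+a^{-1}I$ directly on $V$ by checking that both sides agree on each $U_i$; since $\{U_i\}_{i=0}^d$ is a decomposition of $V$, this suffices. The second relation follows from the first by applying the $\Downarrow$ operation: since $B=K^{\Downarrow}$ and the second split decomposition is the first split decomposition of $\Phi^{\Downarrow}$, the map $a$ in the $q$-Racah parametrization \eqref{eq:theta-assump} transforms appropriately under inversion, which is exactly the source of the $a\leftrightarrow a^{-1}$ swap between the two displayed relations.

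To carry out the first computation, I would fix $0\le i\le d$ and a vector $u\in U_i$, recalling that $Ku=q^{d-2i}u$. The main input is the action of $A$ on the first split decomposition, namely $(A-\theta_i I)U_i\subseteq U_{i+1}$. So I would write $Au=\theta_i u + u'$ where $u'\in U_{i+1}$, hence $Ku'=q^{d-2(i+1)}u'=q^{d-2i-2}u'$. First I would compute $KAu = K(\theta_i u + u')=\theta_i q^{d-2i}u + q^{d-2i-2}u'$ and $AKu = q^{d-2i}Au = q^{d-2i}(\theta_i u + u')$. Then I would form the combination
\begin{equation*}
\frac{qKA-q^{-1}AK}{q-q^{-1}}u
=\frac{q(\theta_i q^{d-2i}u+q^{d-2i-2}u')-q^{-1}q^{d-2i}(\theta_i u+u')}{q-q^{-1}}.
\end{equation*}
The coefficient of $u'$ is $\frac{q\,q^{d-2i-2}-q^{-1}q^{d-2i}}{q-q^{-1}}=\frac{q^{d-2i-1}-q^{d-2i-1}}{q-q^{-1}}=0$, so the $U_{i+1}$ component cancels. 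This is the crucial simplification: the off-diagonal contribution vanishes, leaving a scalar multiple of $u$.

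What remains is to verify that the surviving coefficient of $u$ equals the eigenvalue of $aK^2+a^{-1}I$ on $U_i$, which is $aq^{2(d-2i)}+a^{-1}$. The coefficient of $u$ is $\theta_i q^{d-2i}\cdot\frac{q-q^{-1}}{q-q^{-1}}=\theta_i q^{d-2i}$. Substituting $\theta_i=aq^{d-2i}+a^{-1}q^{2i-d}$ from \eqref{eq:theta-assump} gives $\theta_i q^{d-2i}=aq^{2(d-2i)}+a^{-1}$, which is precisely the eigenvalue of $aK^2+a^{-1}I$ on $U_i$. Since this holds for all $i$ and the $U_i$ span $V$, the first relation holds on all of $V$.

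The main obstacle is bookkeeping rather than conceptual: one must handle the boundary case $i=d$ correctly, where $(A-\theta_d I)U_d=0$ so $u'=0$, but the scalar computation above is unaffected and still yields the correct eigenvalue. For the second relation, the cleanest route is to observe that applying $\Downarrow$ to the first relation replaces $K$ by $B$, leaves $A$ and $I$ fixed (since $A^{\Downarrow}=A$), and replaces $a$ by $a^{-1}$ in the parametrization \eqref{eq:theta-assump} of $\theta_i^{\Downarrow}=\theta_{d-i}$; I would verify this eigenvalue substitution explicitly by repeating the diagonal computation using $(A-\theta_{d-i}I)U_i^{\Downarrow}\subseteq U_{i+1}^{\Downarrow}$ and $Bu=q^{d-2i}u$ for $u\in U_i^{\Downarrow}$, which produces the coefficient $\theta_{d-i}q^{d-2i}=a^{-1}q^{2(d-2i)}+a$, matching the eigenvalue of $a^{-1}B^2+aI$ on $U_i^{\Downarrow}$.
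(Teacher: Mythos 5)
Your proof is correct. The paper offers no argument of its own for this lemma---it simply cites \cite[Section 1.1]{augTDalg}---and your direct verification on the split decompositions (checking that the off-diagonal $U_{i+1}$-contribution cancels and the diagonal coefficient $\theta_i q^{d-2i}$ matches $aq^{2(d-2i)}+a^{-1}$, then handling the second relation via $\Phi^{\Downarrow}$ and the $a\leftrightarrow a^{-1}$ swap) is exactly the standard computation one would expect the cited source to contain.
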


\begin{lemma}{\rm \cite[Theorem 9.9]{bockting2}}.\label{thm:KBquad}
We have
\begin{equation}
aK^2-\frac{a^{-1}q-aq^{-1}}{q-q^{-1}}\ KB-\frac{aq-a^{-1}q^{-1}}{q-q^{-1}}\ BK+a^{-1}B^2=0.\label{eq:KBquad1}
\end{equation}
\end{lemma}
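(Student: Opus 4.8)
The plan is to verify the operator identity (\ref{eq:KBquad1}) by evaluating its left-hand side, which I denote $\mathcal{Q}$, on the first split decomposition $\{U_i\}_{i=0}^d$; since this is a decomposition of $V$, it suffices to show $\mathcal{Q}U_i=0$ for $0\le i\le d$. The two structural inputs I would use are the diagonal action of $K$ on $\{U_i\}$ from Definition \ref{def:KB}, namely $KU_i=q^{d-2i}U_i$, and the lower-triangular action of $B$ relative to the same decomposition supplied by Lemma \ref{lemma:KUdd}, namely $(B-q^{d-2i}I)U_i\subseteq U_0+\cdots+U_{i-1}$.

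First I would dispose of the diagonal contribution. Writing $\alpha=\frac{a^{-1}q-aq^{-1}}{q-q^{-1}}$ and $\beta=\frac{aq-a^{-1}q^{-1}}{q-q^{-1}}$, a short computation gives $\alpha+\beta=a+a^{-1}$. Working modulo $U_0+\cdots+U_{i-1}$, both $K$ and $B$ act on $U_i$ as the scalar $q^{d-2i}$ (for $B$ this is exactly Lemma \ref{lemma:KUdd}, and each of $K,B$ preserves $U_0+\cdots+U_{i-1}$), so modulo this subspace $\mathcal{Q}$ acts on $U_i$ as $(a+a^{-1}-\alpha-\beta)q^{2(d-2i)}=0$. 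Hence $\mathcal{Q}U_i\subseteq U_0+\cdots+U_{i-1}$ for all $i$; equivalently $\mathcal{Q}$ is strictly lower-triangular with respect to $\{U_i\}$, and its associated-graded (diagonal) part vanishes.

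The main obstacle is then to show that the strictly lower-triangular part of $\mathcal{Q}$ also vanishes, since the preceding computation only controls the diagonal. Here the two $q$-Weyl relations of Lemma \ref{lemma:AKq-Weyl} become essential, but they must be used carefully: eliminating $A$ between them naively is circular, because substituting $aK^2+a^{-1}I=(qKA-q^{-1}AK)/(q-q^{-1})$ and the analogous expression for $a^{-1}B^2+aI$ back into $\mathcal{Q}$ merely returns $\mathcal{Q}$. So the argument must invoke the raising structure of $A$. Concretely, I would write $A=aK+a^{-1}K^{-1}+R$, where (\ref{eq:AKq-Weyl}) forces $KR=q^{-2}RK$ and the split-decomposition action of $A$ forces $RU_i\subseteq U_{i+1}$; dually $A=a^{-1}B+aB^{-1}+S$ with $BS=q^{-2}SB$ and $SU_i^{\Downarrow}\subseteq U_{i+1}^{\Downarrow}$. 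Subtracting gives $R-S=a^{-1}B+aB^{-1}-aK-a^{-1}K^{-1}$, and I would feed this back into $KR=q^{-2}RK$ and $BS=q^{-2}SB$ to pin down the genuine off-diagonal entries of $B$ on each $U_i$ and close an induction on $i$ establishing $\mathcal{Q}U_i=0$.

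I expect this last step—converting the graded information together with the $q$-Weyl relations into control of the genuine (not merely associated-graded) action of $B$ on $U_i$—to be the crux, since it is precisely where the noncommutativity of $K$ and $B$ enters and where the exact coefficients $\alpha,\beta$ are needed. Once the off-diagonal part is shown to vanish on each $U_i$, combining with the diagonal vanishing yields $\mathcal{Q}U_i=0$ for all $i$, hence $\mathcal{Q}=0$ on $V=\sum_{i=0}^d U_i$, which is the desired relation (\ref{eq:KBquad1}).
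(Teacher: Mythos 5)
Your first step is correct but is the easy half of the problem. With $\alpha=\frac{a^{-1}q-aq^{-1}}{q-q^{-1}}$ and $\beta=\frac{aq-a^{-1}q^{-1}}{q-q^{-1}}$ one indeed has $\alpha+\beta=a+a^{-1}$, and since $K$ acts as $q^{d-2i}$ on $U_i$ while $B$ does so modulo $U_0+\cdots+U_{i-1}$ (Lemma \ref{lemma:KUdd}), the left-hand side $\mathcal{Q}$ of \eqref{eq:KBquad1} maps $U_i$ into $U_0+\cdots+U_{i-1}$. However, this carries almost no information: the same conclusion would hold for \emph{any} operator in place of $B$ that acts on each $U_i$ as $q^{d-2i}$ plus something strictly flag-lowering, and \eqref{eq:KBquad1} is certainly false for a generic such operator. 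All of the content of the lemma lives in the strictly lower-triangular part, and that is precisely the part you do not prove. Your second half is a plan rather than an argument: you never formulate the induction hypothesis, and you never show how $KR=q^{-2}RK$, $BS=q^{-2}SB$, and $R-S=a^{-1}B+aB^{-1}-aK-a^{-1}K^{-1}$ actually determine the off-diagonal action of $B$ on $U_i$. It is not clear that they can, since $S$ is adapted to the decomposition $\{U_j^{\Downarrow}\}_{j=0}^d$ and the only control you have on $U_i$ is the weak containment $SU_i\subseteq U_0+\cdots+U_{i+1}$. So the proof is incomplete at its acknowledged crux.

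For comparison: the paper itself offers no proof (the lemma is quoted from the source it cites), but the machinery for a short complete argument is already present in Section \ref{section:psi}. By Theorem \ref{thm:BK} one may write $B=Kg(\psi)$ with $g(\psi)=(I-aq^{-1}\psi)(I-a^{-1}q^{-1}\psi)^{-1}$, and Lemma \ref{lemma:Kpsi} gives $f(\psi)K=Kf(q^{-2}\psi)$ for any polynomial or rational function $f$. Pushing every $K$ to the left turns $\mathcal{Q}$ into $K^{2}\bigl(aI-\alpha\, g(\psi)-\beta\, g(q^{-2}\psi)+a^{-1}g(q^{-2}\psi)g(\psi)\bigr)$, and after clearing the commuting invertible denominators the claim reduces to the polynomial identity
\begin{align*}
&a(I-a^{-1}q^{-1}\psi)(I-a^{-1}q^{-3}\psi)-\alpha(I-aq^{-1}\psi)(I-a^{-1}q^{-3}\psi)\\
&\qquad-\beta(I-aq^{-3}\psi)(I-a^{-1}q^{-1}\psi)+a^{-1}(I-aq^{-1}\psi)(I-aq^{-3}\psi)=0,
\end{align*}
whose coefficients of $\psi^{0},\psi^{1},\psi^{2}$ all vanish by direct computation using $\alpha+\beta=a+a^{-1}$. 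This exact, global identity is the kind of input your qualitative flag argument would need in order to close; without it (or an equivalent), the off-diagonal vanishing remains unproved.
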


%%%%%%%%%%%%%%%
%%%%%%%%%%%%%%%
%%%%%%%%%%%%%%%
\section{The linear transformation $\psi$}\label{section:psi}
%%%%%%%%%%%%%%%
%%%%%%%%%%%%%%%
%%%%%%%%%%%%%%%

We continue to discuss the situation of Assumption \ref{assump:main}.   
 In \cite[Section 11]{bockting1} we introduced an element $\Psi\in {\rm End}(V)$.  
In \cite{bockting2} we used the normalization $\psi=(q-q^{-1})(q^d-q^{-d})\Psi$.
In \cite[Theorem 9.8]{bockting2}, we showed that $\psi$ is equal to some rational expressions involving $K,B$.  We now recall this result. 
We start with a comment.

\begin{lemma}{\rm \cite[Lemma 9.7]{bockting2}.}\label{lemma:invertible1}
Each of the following is invertible:
\begin{align}
&aI-a^{-1}BK^{-1}, &a^{-1}I-aKB^{-1},\label{eq:invertible1a}\\
&aI-a^{-1}K^{-1}B, &a^{-1}I-aB^{-1}K.
\end{align}
\end{lemma}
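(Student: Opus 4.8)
The plan is to show that each of the four operators is block lower triangular with respect to one of the two split decompositions of $V$, with a single nonzero scalar along every diagonal block, whence invertibility is immediate. The only decompositions needed are $\{U_i\}_{i=0}^d$ (for the two operators built from $K^{-1}$) and $\{U_i^\Downarrow\}_{i=0}^d$ (for the two operators built from $B^{-1}$).

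Take the first operator $aI-a^{-1}BK^{-1}$ and work with $\{U_i\}_{i=0}^d$ and the flag $F_i=U_0+U_1+\cdots+U_i$. By Definition \ref{def:KB} the subspace $U_i$ is a $K$-eigenspace with eigenvalue $q^{d-2i}$, so $K^{-1}$ acts on $U_i$ as the scalar $q^{2i-d}$ and preserves each $F_i$. By \eqref{eq:BU} we have $BU_i\subseteq q^{d-2i}U_i+F_{i-1}$, so $B$ preserves each $F_i$ and induces the scalar $q^{d-2i}$ on the quotient $F_i/F_{i-1}\cong U_i$. Composing, $BK^{-1}$ preserves the flag $\{F_i\}_{i=0}^d$ and induces on $F_i/F_{i-1}$ the scalar $q^{d-2i}q^{2i-d}=1$. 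Hence $aI-a^{-1}BK^{-1}$ preserves $\{F_i\}_{i=0}^d$ and induces the constant scalar $a-a^{-1}$ on every graded piece; in a basis adapted to the flag its matrix is lower triangular with all diagonal entries $a-a^{-1}$. The same computation with the order of $K^{-1}$ and $B$ reversed shows $aI-a^{-1}K^{-1}B$ is flag triangular with the same diagonal scalar $a-a^{-1}$. For the remaining two operators one repeats the argument verbatim with $\{U_i^\Downarrow\}_{i=0}^d$ in place of $\{U_i\}_{i=0}^d$ and \eqref{eq:KUdd} in place of \eqref{eq:BU}; here $KB^{-1}$ and $B^{-1}K$ induce the scalar $1$ on each graded piece, so $a^{-1}I-aKB^{-1}$ and $a^{-1}I-aB^{-1}K$ are flag triangular with constant diagonal scalar $a^{-1}-a$. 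One could instead obtain the right-hand operator in each row from the left-hand one by applying the second inversion $\Downarrow$, which interchanges $K$ and $B$, replaces $a$ by $a^{-1}$, and sends $\{U_i\}_{i=0}^d$ to $\{U_i^\Downarrow\}_{i=0}^d$.

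It then remains only to check that the common diagonal scalar is nonzero, i.e.\ $a-a^{-1}\neq 0$, equivalently $a^2\neq 1$. This is the one place where a hypothesis is used: since the diameter is at least one, the exponent $0$ lies in the list $2d-2,2d-4,\ldots,2-2d$, so $1=q^0$ is among $q^{2d-2},q^{2d-4},\ldots,q^{2-2d}$, and Lemma \ref{note:main}(i) forbids $a^2$ from this list. Thus $a-a^{-1}\neq 0$ (and likewise $a^{-1}-a\neq 0$), so each of the four triangular matrices has nonzero determinant and the corresponding operator is invertible. The argument has no real obstacle: the triangularity is immediate from Lemma \ref{lemma:KUdd}, and the only substantive points are the observation that the product of the two diagonal scalars collapses to $1$ and the verification that $a^2\neq 1$.
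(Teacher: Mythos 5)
Your argument is correct. Note that the paper itself gives no proof of this lemma: it is imported by citation from \cite[Lemma~9.7]{bockting2}, so there is no in-paper proof to compare against. What you have written is the natural self-contained argument, and every step checks out: $K^{-1}$ acts as the scalar $q^{2i-d}$ on $U_i$ by Definition \ref{def:KB}, Lemma \ref{lemma:KUdd} gives $BU_i\subseteq q^{d-2i}U_i+U_0+\cdots+U_{i-1}$, so $BK^{-1}$ and $K^{-1}B$ preserve the flag $U_0+\cdots+U_i$ and induce the identity on each graded quotient, making $aI-a^{-1}BK^{-1}$ and $aI-a^{-1}K^{-1}B$ triangular with constant diagonal scalar $a-a^{-1}$; the mirror argument with $\{U_i^\Downarrow\}_{i=0}^d$ and the other half of Lemma \ref{lemma:KUdd} handles the remaining two operators. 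You also correctly isolate the one nontrivial hypothesis, namely $a^2\neq 1$, and your derivation of it from Lemma \ref{note:main}(i) together with $d\geq 1$ (so that $q^0$ occurs in the forbidden list $q^{2d-2},\ldots,q^{2-2d}$) is exactly right. The closing remark about obtaining the second column from the first via $\Downarrow$ (which swaps $K$ and $B$ and replaces $a$ by $a^{-1}$) is a correct and economical alternative. No gaps.
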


\begin{lemma}{\rm \cite[Theorem 9.8]{bockting2}.} \label{thm:psiequations}
The following four expressions coincide:
\begin{align}
& \frac{I-BK^{-1}}{q(aI-a^{-1}BK^{-1})},  &&\frac{I-KB^{-1}}{q(a^{-1}I-aKB^{-1})},\label{eq:psiequations1}\\ \medskip
& \frac{q(I-K^{-1}B)}{aI-a^{-1}K^{-1}B}, &&\frac{q(I-B^{-1}K)}{a^{-1}I-aB^{-1}K}.\label{eq:psiequations2}
\end{align}
In {\rm (\ref{eq:psiequations1})}, {\rm (\ref{eq:psiequations2})} the denominators are invertible by Lemma \ref{lemma:invertible1}.
\end{lemma}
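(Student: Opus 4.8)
The plan is to first make the four fractions meaningful, dispose of two of the three equalities by a reciprocal trick, and then prove the single remaining equality by clearing denominators and invoking the quadratic relation (\ref{eq:KBquad1}). Throughout I would write $P = BK^{-1}$ and $Q = K^{-1}B$, so that $KB^{-1} = P^{-1}$ and $B^{-1}K = Q^{-1}$. The point to record at the outset is that in each of the four expressions the numerator and denominator are polynomials in a single operator: the first two in $P$ and $P^{-1}$ respectively, the last two in $Q$ and $Q^{-1}$. Hence within each expression numerator and denominator commute, so the fraction is unambiguous (the left and right inverse conventions agree), and the denominators are invertible by Lemma \ref{lemma:invertible1}.

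First I would show that the first expression equals the second and the third equals the fourth. Since $KB^{-1} = P^{-1}$, right-multiplying the numerator and denominator of the second expression by $P$ leaves its value unchanged, and the resulting numerator and denominator simplify (everything being a polynomial in $P$) to those of the first expression; the identical manipulation with $B^{-1}K = Q^{-1}$ carries the fourth expression to the third. These steps are purely formal and use no relation between $K$ and $B$.

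The substance of the lemma is the equality of the first and third expressions, and this is where (\ref{eq:KBquad1}) enters. Write the first expression as $N_1 D_1^{-1}$ with $N_1 = I - P$, $D_1 = q(aI - a^{-1}P)$, and the third as $D_3^{-1} N_3$ with $N_3 = q(I - Q)$, $D_3 = aI - a^{-1}Q$. Left-multiplying by $D_3$ and right-multiplying by $D_1$ shows that $N_1 D_1^{-1} = D_3^{-1} N_3$ is equivalent to the polynomial identity $D_3 N_1 = N_3 D_1$, that is,
\begin{equation*}
(aI - a^{-1}Q)(I - P) = q^2 (I - Q)(aI - a^{-1}P).
\end{equation*}
To obtain a usable relation between $P$ and $Q$, I would conjugate (\ref{eq:KBquad1}) by $K^{-1}$ (left- and right-multiply by $K^{-1}$), which collapses the four terms to
\begin{equation*}
aI - \alpha P - \beta Q + a^{-1}QP = 0, \qquad \alpha = \tfrac{a^{-1}q - aq^{-1}}{q - q^{-1}}, \quad \beta = \tfrac{aq - a^{-1}q^{-1}}{q - q^{-1}};
\end{equation*}
the key point here is that the outer term $a^{-1}K^{-1}B^2K^{-1}$ becomes exactly $a^{-1}QP$, matching the order of the cross term $QP$ that appears on both sides of the displayed identity. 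Expanding that identity and comparing the coefficients of $I, P, Q, QP$, I expect the difference of its two sides to equal $(1-q^2)$ times the left-hand side of the conjugated relation, hence to vanish. This gives $D_3 N_1 = N_3 D_1$, and since $D_1, D_3$ are invertible (Lemma \ref{lemma:invertible1}) we recover $N_1 D_1^{-1} = D_3^{-1} N_3$, completing the chain of equalities.

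The main obstacle is the noncommutativity of $K$ and $B$: the first and second expressions live in the commutative subalgebra generated by $P$, the third and fourth in the one generated by $Q$, and these two subalgebras are genuinely different. The quadratic relation is precisely the bridge between them, and the delicate part is that its conjugate produces the cross term in the order $QP$ required for the coefficient comparison to close up; a wrong ordering, or a slip in the coefficients $\alpha, \beta$, would break the match. The remaining work — the two reciprocal reductions and the coefficient comparison — is routine once the correct identity $D_3 N_1 = N_3 D_1$ has been isolated.
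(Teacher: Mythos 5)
Your argument is correct, and all the algebra checks out: the reciprocal trick does reduce the second expression to the first and the fourth to the third (multiplying numerator and denominator of $N D^{-1}$ on the right by $P$ is legitimate since $(DP)^{-1}=P^{-1}D^{-1}$), conjugating \eqref{eq:KBquad1} by $K^{-1}$ does yield $aI-\alpha P-\beta Q+a^{-1}QP=0$ with the cross term in the order $QP$, and the difference of the two sides of $D_3N_1=N_3D_1$ is exactly $(1-q^2)$ times that relation, using $1-q^2=-q(q-q^{-1})$. Note, however, that the paper does not prove this lemma at all: it is imported verbatim from \cite[Theorem~9.8]{bockting2}, where $\psi$ is first defined by other means and the four rational expressions are then shown to equal it. So there is no internal proof to compare against; what you have produced is a self-contained derivation of the statement from Lemma \ref{thm:KBquad} together with the invertibility facts in Lemma \ref{lemma:invertible1}, which is a genuinely independent (and arguably cleaner) route. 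The one caveat worth stating explicitly is the direction of logical dependency: in the source paper the quadratic relation \eqref{eq:KBquad1} (Theorem~9.9 there) is presumably derived downstream of the $\psi$ identities, so your argument reverses that order; within the present paper's presentation this is harmless, since Lemma \ref{thm:KBquad} is quoted as an independent fact before Lemma \ref{thm:psiequations}, but if one traced everything back to \cite{bockting2} one would need to confirm that Theorem~9.9 there does not itself rest on Theorem~9.8. You might also record that $q^2\neq 1$ (Lemma \ref{note:main}(ii)) is not actually needed for your final step, since you only multiply by $1-q^2$ rather than divide by it.
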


\begin{definition}{\rm \label{def:psi}
Define $\psi\in {\rm End}(V)$ to be the common value of the four expressions in Lemma \ref{thm:psiequations}.
}\end{definition}

We now recall some facts concerning $\psi$.  

\begin{lemma}{\rm\cite[Lemma 5.4]{bockting2}.}\label{lemma:Kpsi}
Both
\begin{equation}
K\psi =q^2\psi K, \qquad\qquad B\psi =q^2\psi B.\label{eq:Kpsi}
\end{equation}
\end{lemma}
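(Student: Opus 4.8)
The plan is to prove both commutation relations by exploiting the action of $\psi$, $K$, and $B$ on the split decompositions, rather than by manipulating the rational expressions that define $\psi$. The needed inputs are already available. By Definition \ref{def:KB}, the operator $K$ acts on $U_i$ as the scalar $q^{d-2i}$ and $B$ acts on $U_i^\Downarrow$ as the scalar $q^{d-2i}$, for $0\le i\le d$. Moreover, as recalled in the introduction from \cite[Lemma 11.2, Corollary 15.3]{bockting1}, the map $\psi$ satisfies $\psi U_i\subseteq U_{i-1}$ and $\psi U_i^\Downarrow\subseteq U_{i-1}^\Downarrow$ for $1\le i\le d$, together with $\psi U_0=0$ and $\psi U_0^\Downarrow=0$. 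The idea is that $\psi$ lowers the index on the first split decomposition while $K$ reads off that index through its eigenvalue, so the two operators commute up to the scalar $q^2$; the same holds for $B$ on the second split decomposition.

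For the first relation, since $V=\sum_{i=0}^d U_i$ it suffices to verify $K\psi=q^2\psi K$ on each summand $U_i$. Fix $i$ and take $v\in U_i$. On one side, $Kv=q^{d-2i}v$ gives $q^2\psi K v=q^{d-2i+2}\psi v$. On the other side, $\psi v\in U_{i-1}$, on which $K$ acts as the scalar $q^{d-2(i-1)}=q^{d-2i+2}$, so $K\psi v=q^{d-2i+2}\psi v$. Hence $K\psi v=q^2\psi K v$. The boundary case $i=0$ is immediate since $\psi U_0=0$ makes both sides vanish. As this holds on every $U_i$, we conclude $K\psi=q^2\psi K$.

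For the second relation, the identical argument applies to the second split decomposition: for $w\in U_i^\Downarrow$ we have $Bw=q^{d-2i}w$ and $\psi w\in U_{i-1}^\Downarrow$, on which $B$ acts as $q^{d-2i+2}$, so $B\psi w=q^{d-2i+2}\psi w=q^2\psi B w$, and $V=\sum_{i=0}^d U_i^\Downarrow$ finishes it. Alternatively one may deduce this relation as the $\Downarrow$-image of the first: under the second inversion one has $K^\Downarrow=B$ (Definition \ref{def:KB}) and $a\mapsto a^{-1}$, under which the four expressions of Lemma \ref{thm:psiequations} are permuted among themselves (for instance the first expression in (\ref{eq:psiequations1}) maps to the second), so their common value satisfies $\psi^\Downarrow=\psi$; applying $\Downarrow$ to $K\psi=q^2\psi K$ then yields $B\psi=q^2\psi B$. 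I anticipate no serious obstacle here: the only points requiring care are the bookkeeping of eigenvalue exponents (that lowering $i$ by one raises the exponent by $2$) and the boundary summands, both routine. The single load-bearing external fact is the containment $\psi U_i\subseteq U_{i-1}$; if one instead insisted on staying within the rational-function description of $\psi$, one would need a transparent commutation rule between $K$ and $BK^{-1}$, which is less convenient, so the split-decomposition route above is the cleaner path.
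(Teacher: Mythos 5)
Your proof is correct. Note that the paper itself offers no argument for this lemma: it is imported verbatim as a citation to an earlier work, so there is no internal proof to compare against. Your derivation is the natural self-contained one, and every input you use is available in the paper: the eigenspace description of $K$ and $B$ from Definition \ref{def:KB}, and the lowering containments $\psi U_i\subseteq U_{i-1}$, $\psi U_i^{\Downarrow}\subseteq U_{i-1}^{\Downarrow}$ from Lemma \ref{lemma:psiU}. The eigenvalue bookkeeping checks out ($\psi v\in U_{i-1}$ forces $K\psi v=q^{d-2i+2}\psi v=q^2\psi Kv$, with the $i=0$ case trivial), and summing over the split decomposition finishes it. Two small remarks. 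First, since $K$ is diagonalizable with $U_i$ as its $q^{d-2i}$-eigenspace, the relation $K\psi=q^2\psi K$ is not merely implied by but \emph{equivalent to} the containments $\psi U_i\subseteq U_{i-1}$ ($0\le i\le d$, with $U_{-1}=0$), so your argument is really the observation of this equivalence; that is fine, because Lemma \ref{lemma:psiU} is itself an independently established external fact (from the reference \cite{bockting1}, which predates the introduction of $K$), so no circularity arises even though Lemma \ref{lemma:psiU} happens to be stated after Lemma \ref{lemma:Kpsi} in this paper. Second, your alternate route for the $B$ relation via the second inversion is also sound: under $\Downarrow$ one has $K\leftrightarrow B$ and $a\mapsto a^{-1}$, the four expressions of Lemma \ref{thm:psiequations} are permuted among themselves, hence $\psi^{\Downarrow}=\psi$ and the $B$ relation is the $\Downarrow$-image of the $K$ relation; but the direct argument on $\{U_i^{\Downarrow}\}_{i=0}^d$ is just as short and avoids having to justify $\psi^{\Downarrow}=\psi$.
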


\begin{lemma}{\rm \cite[Lemma 11.2, Corollary 15.3]{bockting1}}. \label{lemma:psiU}
We have 
\begin{equation}
\psi U_i\subseteq U_{i-1},\qquad\qquad \psi U_i^\Downarrow\subseteq U_{i-1}^\Downarrow\qquad\qquad (1\leq i\leq d)
\end{equation}
and also $\psi U_0=0$ and $\psi U_0^\Downarrow=0$.   
Moreover $\psi^{d+1}=0$.
\end{lemma}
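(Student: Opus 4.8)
The plan is to exploit the commutation relation $K\psi=q^2\psi K$ from Lemma \ref{lemma:Kpsi} together with the fact that the eigenvalues $q^{d-2j}$ of $K$ are mutually distinct, and to dispose of the boundary terms $\psi U_0,\psi U_0^\Downarrow$ separately using the closed forms for $\psi$ in Lemma \ref{thm:psiequations}.

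First I would prove $\psi U_i\subseteq U_{i-1}$ for $1\leq i\leq d$. Fix such an $i$ and let $v\in U_i$, so $Kv=q^{d-2i}v$ by Definition \ref{def:KB}. Applying Lemma \ref{lemma:Kpsi},
\[ K\psi v=q^2\psi Kv=q^{d-2(i-1)}\psi v, \]
so $\psi v$ lies in the $q^{d-2(i-1)}$-eigenspace of $K$ (or is zero). Since $1\leq i\leq d$, the scalar $q^{d-2(i-1)}$ is precisely the $K$-eigenvalue attached to $U_{i-1}$; and because $q^{2k}\neq 1$ for $1\leq k\leq d$ by Lemma \ref{note:main}(ii), the eigenvalues $\{q^{d-2j}\}_{j=0}^d$ are mutually distinct, so that eigenspace is exactly $U_{i-1}$. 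Hence $\psi v\in U_{i-1}$. The identical argument using $B\psi=q^2\psi B$ and the $B$-eigenspaces $U_i^\Downarrow$ yields $\psi U_i^\Downarrow\subseteq U_{i-1}^\Downarrow$ for $1\leq i\leq d$.

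Next I would handle the boundary cases. By \eqref{eq:BU} at $i=0$ the right-hand sum is empty, so $(B-q^dI)U_0=0$, i.e.\ $B$ acts as $q^d$ on $U_0$; since $K$ also acts as $q^d$ on $U_0$, the operator $BK^{-1}$ restricts to the identity on $U_0$, whence $(I-BK^{-1})U_0=0$. Now write $\psi$ in the form $\frac{I-BK^{-1}}{q(aI-a^{-1}BK^{-1})}$ from Lemma \ref{thm:psiequations}: the denominator is a polynomial in $BK^{-1}$, hence commutes with the numerator and is invertible by Lemma \ref{lemma:invertible1}, so $\psi=[q(aI-a^{-1}BK^{-1})]^{-1}(I-BK^{-1})$ and therefore $\psi U_0=0$. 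Symmetrically, using $(K-q^dI)U_0^\Downarrow=0$ from \eqref{eq:KUdd} and the form $\frac{I-KB^{-1}}{q(a^{-1}I-aKB^{-1})}$, I obtain $\psi U_0^\Downarrow=0$.

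Finally, $\psi^{d+1}=0$ is immediate: the containments just proved give $\psi U_i\subseteq U_{i-1}$ for all $0\leq i\leq d$ with $U_{-1}=0$, so $\psi^{d+1}U_i\subseteq U_{i-d-1}=0$ for every $i$, and since $V=\sum_{i=0}^d U_i$ is direct it follows that $\psi^{d+1}=0$. The point requiring care is the refinement from a sum of components down to the single component $U_{i-1}$: arguing through the triangular structure of $\psi$ relative to the flag $U_0+\cdots+U_i$ would only yield containment in $U_0+\cdots+U_{i-1}$, whereas the commutation relation pins the image to one eigenspace at once, provided the $K$-eigenvalues are distinct. The boundary index $i=0$ is exactly where this eigenvalue argument can break down, since the candidate eigenvalue $q^{d+2}$ need not lie outside the spectrum (Lemma \ref{note:main}(ii) does not exclude $q^{2(d+1)}=1$); this is why it must be treated separately through the explicit formula for $\psi$.
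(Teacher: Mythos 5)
Your argument is correct, but note that the paper does not actually prove this lemma: it is imported wholesale from \cite[Lemma~11.2, Corollary~15.3]{bockting1}, where $\Psi$ is constructed by entirely different means and the containments $\psi U_i\subseteq U_{i-1}$ are established there from that construction. What you have done instead is give a self-contained derivation from the machinery this paper sets up \emph{before} the lemma is stated: the eigenspace definitions of $K,B$ (Definition \ref{def:KB}), the relations $K\psi=q^2\psi K$, $B\psi=q^2\psi B$ (Lemma \ref{lemma:Kpsi}), the distinctness of the eigenvalues $q^{d-2j}$ (Lemma \ref{note:main}(ii)), and the rational expressions for $\psi$ in $BK^{-1}$, $KB^{-1}$ (Lemma \ref{thm:psiequations}). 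Each step checks out: the commutation relation pins $\psi U_i$ to the single eigenspace $U_{i-1}$ rather than merely to the flag $U_0+\cdots+U_{i-1}$; your separate treatment of $\psi U_0$ via $(B-q^dI)U_0=0$ and the factorization $\psi=[q(aI-a^{-1}BK^{-1})]^{-1}(I-BK^{-1})$ is sound (the two factors commute, being polynomials in $BK^{-1}$); and your closing observation that the eigenvalue argument genuinely fails at $i=0$ because $q^{2d+2}=1$ is not excluded by Lemma \ref{note:main}(ii) is exactly the right reason the boundary case needs the explicit formula. The trade-off is that your route leans on Lemma \ref{lemma:Kpsi} and Lemma \ref{thm:psiequations}, which this paper also only cites from \cite{bockting2}; so your proof is self-contained relative to this paper's stated prerequisites, whereas the original proof in \cite{bockting1} works directly from the definition of $\Psi$. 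As a derivation within the present paper's logical ordering it is valid and arguably more illuminating, since it explains the lowering property as a direct consequence of the $q^2$-commutation with $K$ and $B$.
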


In Lemma \ref{thm:psiequations} we obtained $\psi$ as a rational expression in $BK^{-1}$ or $K^{-1}B$.  Next we solve for $BK^{-1}$ and $K^{-1}B$ as a rational function in $\psi$.  In order to state the answer, we will need the following result. 

\begin{lemma}{\rm \cite[Lemma 9.2]{bockting2}}.\label{lemma:invertible2}
Each of the following is invertible:
\begin{equation}
I-aq\psi,\qquad I-a^{-1}q\psi, \qquad I-aq^{-1}\psi,\qquad I-a^{-1}q^{-1}\psi.\label{eq:invertible2}
\end{equation}
Their inverses are as follows:
\begin{align}
(I-aq\psi)^{-1}&=\sum_{i=0}^da^iq^i\psi^i,\qquad &(I-a^{-1}q\psi)^{-1}&=\sum_{i=0}^d a^{-i}q^i\psi^i,\label{eq:psiinv1'}\\
(I-aq^{-1}\psi)^{-1}&=\sum_{i=0}^d a^iq^{-i}\psi^i,\qquad &(I-a^{-1}q^{-1}\psi)^{-1}&=\sum_{i=0}^d a^{-i}q^{-i}\psi^i.\label{eq:psiinv4'}
\end{align}
\end{lemma}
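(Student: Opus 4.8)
The plan is to prove that each of the four expressions in (\ref{eq:invertible2}) is invertible and that their inverses are given by the stated geometric-type series. The key structural fact I would exploit is the nilpotency $\psi^{d+1}=0$ from Lemma \ref{lemma:psiU}, which immediately trivializes the analysis: for a nilpotent operator, $I - c\psi$ is automatically invertible for any scalar $c$, since the formal geometric series $\sum_{i\geq 0} c^i \psi^i$ terminates after the term $i=d$ and hence is a genuine element of ${\rm End}(V)$.

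First I would establish invertibility. Fix one of the four cases, say $I - aq\psi$ (the other three are identical with $aq$ replaced by $a^{-1}q$, $aq^{-1}$, $a^{-1}q^{-1}$). Set $c = aq$ and define $S = \sum_{i=0}^{d} c^i \psi^i \in {\rm End}(V)$, a finite sum which is well defined precisely because $\psi^{d+1}=0$. The core computation is the telescoping identity
\begin{equation*}
(I - c\psi)\,S = \sum_{i=0}^{d} c^i\psi^i - \sum_{i=0}^{d} c^{i+1}\psi^{i+1} = I - c^{d+1}\psi^{d+1} = I,
\end{equation*}
where the last step uses $\psi^{d+1}=0$. An identical computation gives $S(I-c\psi) = I$, since $\psi$ commutes with itself and $c$ is a scalar. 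Hence $I - c\psi$ is invertible with two-sided inverse $S$, which is exactly the claimed formula. Running this verbatim for the four values $c \in \{aq,\ a^{-1}q,\ aq^{-1},\ a^{-1}q^{-1}\}$ yields both the invertibility assertion and the four explicit inverse formulas (\ref{eq:psiinv1'}), (\ref{eq:psiinv4'}) simultaneously.

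There is essentially no serious obstacle here: the only thing to verify carefully is that the series terminates, and that is handed to us by $\psi^{d+1}=0$. The nonzero scalars $a$ and $q$ guarantee $c \neq 0$ in each case, though invertibility in fact holds for any scalar since the argument never divides by $c$. I would present the single telescoping argument once and then remark that the remaining three cases follow by substituting the appropriate value of $c$, rather than repeating the computation four times. This keeps the proof to a few lines and makes transparent that the whole statement is a formal consequence of nilpotency.
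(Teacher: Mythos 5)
Your proof is correct. The paper itself gives no argument for this lemma --- it simply cites \cite[Lemma 9.2]{bockting2} --- so there is nothing to compare line by line; your derivation supplies the standard self-contained justification. The two ingredients you use are exactly the right ones: $\psi^{d+1}=0$ from Lemma \ref{lemma:psiU} (which is stated before Lemma \ref{lemma:invertible2}, so there is no circularity), and the telescoping identity $(I-c\psi)\sum_{i=0}^{d}c^i\psi^i = I - c^{d+1}\psi^{d+1} = I$, together with the observation that the sum is a polynomial in $\psi$ and hence commutes with $I-c\psi$, giving a two-sided inverse. Your remark that the argument works for an arbitrary scalar $c$ and never uses $c\neq 0$ is also accurate. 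This is almost certainly the same argument as in the cited reference, and no gap remains.
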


The next result is an immediate consequence of Lemma \ref{thm:psiequations}, Definition \ref{def:psi}, and Lemma \ref{lemma:invertible2}.

\begin{thm}{\rm \cite[Theorem 9.4]{bockting2}.}\label{thm:BK}
The following hold:
\begin{align}
BK^{-1}&=\frac{I-aq\psi}{I-a^{-1}q\psi}, \qquad &&KB^{-1}=\frac{I-a^{-1}q\psi}{I-aq\psi},\label{eq:BK-1}\\
K^{-1}B&=\frac{I-aq^{-1}\psi}{I-a^{-1}q^{-1}\psi}, \qquad  &&B^{-1}K=\frac{I-a^{-1}q^{-1}\psi}{I-aq^{-1}\psi}.\label{eq:BK-2}
\end{align}
In {\rm (\ref{eq:BK-1})}, {\rm (\ref{eq:BK-2})} the denominators are invertible by Lemma \ref{lemma:invertible2}. 
\end{thm}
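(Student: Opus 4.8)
The plan is to prove Theorem~\ref{thm:BK} by recognizing that the four expressions in Lemma~\ref{thm:psiequations} are each of the form (linear in some product $X$)$/$(linear in the same $X$), and that $\psi$ equals each of these. The strategy is therefore to take a single one of the four defining equations for $\psi$, treat it as an equation relating $\psi$ to the relevant product ($BK^{-1}$, $KB^{-1}$, $K^{-1}B$, or $B^{-1}K$), and algebraically solve for that product in terms of $\psi$. For concreteness, consider the first expression in (\ref{eq:psiequations1}), namely
\begin{equation*}
\psi=\frac{I-BK^{-1}}{q(aI-a^{-1}BK^{-1})}.
\end{equation*}
Since $BK^{-1}$ and $\psi$ are rational functions of one another (and in particular commute), I can clear the denominator without worrying about operator-ordering issues.

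First I would multiply both sides on the right by $q(aI-a^{-1}BK^{-1})$, which is invertible by Lemma~\ref{lemma:invertible1}, to obtain $q\psi(aI-a^{-1}BK^{-1})=I-BK^{-1}$. Expanding and collecting the $BK^{-1}$ terms on one side gives $aq\psi\, I - I = (a^{-1}q\psi - I)BK^{-1}$, that is, $(I-aq\psi)=(I-a^{-1}q\psi)BK^{-1}$ after multiplying through by $-1$. Then I would left-multiply by $(I-a^{-1}q\psi)^{-1}$, whose invertibility and explicit power-series form are guaranteed by Lemma~\ref{lemma:invertible2}, to arrive at
\begin{equation*}
BK^{-1}=(I-a^{-1}q\psi)^{-1}(I-aq\psi)=\frac{I-aq\psi}{I-a^{-1}q\psi},
\end{equation*}
which is exactly the first equation of (\ref{eq:BK-1}). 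The other three formulas follow by the identical manipulation applied to the other three expressions in Lemma~\ref{thm:psiequations}: the second expression yields $KB^{-1}$, the third yields $K^{-1}B$, and the fourth yields $B^{-1}K$, each time solving a linear fractional relation for the product in question.

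Alternatively, one can halve the work by noting the reciprocal relationships: once $BK^{-1}$ is known, $KB^{-1}=(BK^{-1})^{-1}$ is immediate, and similarly $B^{-1}K=(K^{-1}B)^{-1}$; both inverses are again legitimate fractions in $\psi$ by Lemma~\ref{lemma:invertible2}. This reduces the computation to two substantive cases, one from (\ref{eq:psiequations1}) and one from (\ref{eq:psiequations2}).

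I do not expect a genuine obstacle here, since the statement is advertised in the excerpt as ``an immediate consequence'' of the earlier lemmas; the content is entirely the invertibility inputs from Lemma~\ref{lemma:invertible1} and Lemma~\ref{lemma:invertible2}, which license clearing denominators and inverting $I-a^{\pm1}q^{\pm1}\psi$. The one point requiring a word of care is the commutativity used when clearing denominators: one must observe that $\psi$ and each of $BK^{-1}$, $KB^{-1}$, $K^{-1}B$, $B^{-1}K$ lie in the commutative subalgebra of $\mathrm{End}(V)$ generated by a single element, so that writing the result as a quotient $\frac{I-aq\psi}{I-a^{-1}q\psi}$ is unambiguous (left and right quotients agree). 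This commutativity is built into the very definition of $\psi$ as a rational expression in these products, so the only thing to verify explicitly is that the denominators appearing in (\ref{eq:BK-1}) and (\ref{eq:BK-2}) are invertible, which is precisely the content of the closing sentence of the theorem and is supplied by Lemma~\ref{lemma:invertible2}.
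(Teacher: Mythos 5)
Your proposal is correct and follows essentially the same route as the paper, which presents Theorem \ref{thm:BK} as an immediate consequence of Lemma \ref{thm:psiequations}, Definition \ref{def:psi}, and Lemma \ref{lemma:invertible2} — i.e., exactly the denominator-clearing computation you carry out, with the invertibility inputs from Lemmas \ref{lemma:invertible1} and \ref{lemma:invertible2} and the observation that $\psi$ commutes with each product since it is a rational expression in that product.
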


\begin{lemma}{\rm \cite[Equation (22)]{bockting2}}.\label{lemma:Apsi}
We have
\begin{equation}
\frac{\psi A-A\psi}{q-q^{-1}}=\left(I-aq\psi\right)K-\left(I-a^{-1}q^{-1}\psi\right)K^{-1}.
\end{equation}
\end{lemma}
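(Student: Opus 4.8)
The plan is to treat the ordinary commutator $\psi A-A\psi$ by exploiting the fact that, by Lemma~\ref{thm:psiequations} and Theorem~\ref{thm:BK}, $\psi$ and $BK^{-1}$ are rational functions of one another and in particular commute. Writing $Z=BK^{-1}$ and $C=\psi A-A\psi$, I would reduce the entire identity to the single commutator $[A,Z]$ together with the defining relation $q(aI-a^{-1}Z)\psi=I-Z$ coming from Definition~\ref{def:psi}. The advantage of this route is that $A$ only ever has to be pushed past the one combination $Z=BK^{-1}$.

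First I would compute $[A,Z]=[A,BK^{-1}]$ directly from the two $q$-Weyl relations in Lemma~\ref{lemma:AKq-Weyl}. Solving the $B$-relation for $AB$ and the conjugated $K$-relation for $AK^{-1}$, then substituting into $ABK^{-1}-BK^{-1}A$, one finds that every occurrence of $A$ cancels and
\[
[A,BK^{-1}]=q(q-q^{-1})\bigl(aB-aK^{-1}+a^{-1}BK^{-2}-a^{-1}B^2K^{-1}\bigr).
\]
This cancellation of $A$ is the heart of the argument: it works precisely because the $q$-twists in the $K$-relation and in the $B$-relation are matched to the combination $BK^{-1}$. I expect this step to be the main obstacle, since it is the one place where the precise coefficients of Lemma~\ref{lemma:AKq-Weyl} must conspire, and any sign or power-of-$q$ slip destroys the cancellation.

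Next I would apply $[A,\,\cdot\,]$ to the defining relation $q(aI-a^{-1}Z)\psi=I-Z$. Using $[\psi,Z]=0$, the Leibniz rule, and $[A,\psi]=-C$, this collapses to $q(aI-a^{-1}Z)\,C=[A,Z]\,(I-a^{-1}q\psi)$. The factor $aI-a^{-1}BK^{-1}$ is invertible by Lemma~\ref{lemma:invertible1}; better still, substituting $BK^{-1}=(I-aq\psi)(I-a^{-1}q\psi)^{-1}$ from Theorem~\ref{thm:BK} gives the clean evaluation $aI-a^{-1}BK^{-1}=(a-a^{-1})(I-a^{-1}q\psi)^{-1}$. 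Inverting and inserting the formula for $[A,Z]$ yields
\[
C=\tfrac{q-q^{-1}}{a-a^{-1}}\,(I-a^{-1}q\psi)\,G_0\,(I-a^{-1}q\psi),\qquad G_0=aB-aK^{-1}+a^{-1}BK^{-2}-a^{-1}B^2K^{-1}.
\]

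Finally I would eliminate $B$ by writing $B=fK$ with $f=(I-aq\psi)(I-a^{-1}q\psi)^{-1}$, so that $G_0=f(a-a^{-1}\hat f)K-(a-a^{-1}f)K^{-1}$, where $\hat f=KfK^{-1}$ is obtained from $f$ by the substitution $\psi\mapsto q^2\psi$ via Lemma~\ref{lemma:Kpsi}. The simplification then rests on two elementary M\"obius cancellations, $(I-a^{-1}q\psi)f=I-aq\psi$ and $(a-a^{-1}\hat f)(I-a^{-1}q^3\psi)=(a-a^{-1})I$, together with $K\psi=q^2\psi K$. Carrying the outer factors $(I-a^{-1}q\psi)$ through $G_0$ and commuting $\psi$ across the powers of $K$ collapses the bracket to $(a-a^{-1})\bigl[(I-aq\psi)K-(I-a^{-1}q^{-1}\psi)K^{-1}\bigr]$, and the prefactor $\tfrac{q-q^{-1}}{a-a^{-1}}$ then gives exactly the claimed formula. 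As a consistency check one can heuristically set $\psi=0$, which forces $B=K$ and reduces both sides to $(q-q^{-1})(K-K^{-1})$.
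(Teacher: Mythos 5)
Your proof is correct, and it takes a genuinely different route from the paper: the paper offers no internal derivation at all, disposing of this lemma in one line as ``a reformulation of \cite[Equation (22)]{bockting2} using \cite[Equation (14)]{bockting2}.'' You instead derive the identity entirely from results stated in the present paper. I checked the two delicate steps. First, eliminating $A$ from $[A,BK^{-1}]$ via the two relations of Lemma \ref{lemma:AKq-Weyl} (rewritten as $qAK^{-1}-q^{-1}K^{-1}A=(q-q^{-1})(aI+a^{-1}K^{-2})$ and $AB=q^{2}BA-q(q-q^{-1})(a^{-1}B^{2}+aI)$) does produce $[A,BK^{-1}]=q(q-q^{-1})(aB-aK^{-1}+a^{-1}BK^{-2}-a^{-1}B^{2}K^{-1})$ exactly as you claim. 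Second, applying $[A,\cdot\,]$ to $q(aI-a^{-1}Z)\psi=I-Z$ and using $aI-a^{-1}Z=(a-a^{-1})(I-a^{-1}q\psi)^{-1}$, $(I-a^{-1}q\psi)f=I-aq\psi$, $(aI-a^{-1}\hat f)(I-a^{-1}q^{3}\psi)=(a-a^{-1})I$, and $K^{\pm1}\psi=q^{\mp2}\psi K^{\pm1}$ collapses everything to $(q-q^{-1})\bigl[(I-aq\psi)K-(I-a^{-1}q^{-1}\psi)K^{-1}\bigr]$ as required. What your approach buys is a self-contained verification inside the algebra generated by $K,B,\psi,A$, making the lemma independent of the external reference; what the paper's approach buys is brevity, since the hard work was already done in \cite{bockting2}. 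One caveat: your closing ``consistency check'' at $\psi=0$ is not valid --- there the left side vanishes while the right side is $(q-q^{-1})(K-K^{-1})\neq 0$; the specialization $\psi=0$ forces $B=K$ and is incompatible with the relations in play, so it tests nothing. Strike that remark; the proof itself does not depend on it.
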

\begin{proof}
This result is a reformulation of \cite[Equation (22)]{bockting2} using \cite[Equation (14)]{bockting2}.
\end{proof}

%%%%%%%%%%%%%%%
%%%%%%%%%%%%%%%
%%%%%%%%%%%%%%%
\section{The linear transformation $\Delta$}\label{section:delta} 
%%%%%%%%%%%%%%%
%%%%%%%%%%%%%%%
%%%%%%%%%%%%%%%

We continue to discuss the situation of Assumption \ref{assump:main}.   
 In \cite[Section 9]{bockting1} we introduced an invertible element $\Delta\in {\rm End}(V)$.  In \cite{bockting1} we showed that $\Delta,\psi$ commute and in fact both $\Delta,\Delta^{-1}$ are power series in $\psi$.   These power series will be the central focus of this paper. We will show that each of those power series factors as a product of two power series, each of which is a quantum exponential in $\psi$.

\begin{lemma}{\rm \cite[Lemma 9.5]{bockting1}.}\label{lemma:DD'}
There exists a unique $\Delta\in {\rm End}(V)$ such that
\begin{align}
&\Delta U_i\subseteq U_i^{\Downarrow} &\ (0\leq i \leq d),\label{eq:DeltaU21}\\
&(\Delta -I)U_i\subseteq U_0+U_1+\cdots +U_{i-1} &\ (0\leq i \leq d).\label{eq:DeltaU22}
\end{align}
\end{lemma}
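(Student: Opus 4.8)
The plan is to establish existence and uniqueness separately, exploiting the decomposition $V=\sum_{i=0}^d U_i$ and the flag structure it carries. First I would introduce the notation $U_{[i]}:=U_0+U_1+\cdots+U_i$ for the increasing flag associated with the first split decomposition, and similarly $U^{\Downarrow}_{[i]}$ for the second; note that by \eqref{eq:UUdd} these two flags agree, since $U_{[i]}=E^*_0V+\cdots+E^*_iV=U^{\Downarrow}_{[i]}$. This observation is the crucial structural input: it tells us that the condition $\Delta U_i\subseteq U^{\Downarrow}_i$ together with $(\Delta-I)U_i\subseteq U_{[i-1]}$ is really a statement about a single filtration of $V$, which is what makes the triangular bookkeeping work.

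For \emph{uniqueness}, suppose $\Delta,\Delta'$ both satisfy \eqref{eq:DeltaU21} and \eqref{eq:DeltaU22}, and set $D=\Delta-\Delta'$. Then $DU_i\subseteq U^{\Downarrow}_i$ for all $i$, while from \eqref{eq:DeltaU22} applied to each of $\Delta,\Delta'$ we get $DU_i\subseteq U_{[i-1]}$. Since $U^{\Downarrow}_i\cap U_{[i-1]}=0$ — which follows because $U^{\Downarrow}_i$ is a single summand in the direct sum $V=\sum_j U^{\Downarrow}_j$ while $U_{[i-1]}=U^{\Downarrow}_{[i-1]}$ by the flag identity above, and these are complementary — we conclude $DU_i=0$ for every $i$, hence $D=0$. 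The heart of the uniqueness argument is thus the transversality $U^{\Downarrow}_i\cap U^{\Downarrow}_{[i-1]}=0$, which is immediate from directness of the second split decomposition.

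For \emph{existence}, I would build $\Delta$ by specifying its action on each $U_i$ and checking consistency. The natural construction is to let $\Delta$ act on $U_i$ as the composite of the inclusion $U_i\hookrightarrow U_{[i]}=U^{\Downarrow}_{[i]}$ followed by the projection $U^{\Downarrow}_{[i]}\to U^{\Downarrow}_i$ along $U^{\Downarrow}_{[i-1]}$. Concretely, for $v\in U_i$ write $v$ in the second split decomposition as $v=\sum_{j\le i}v^{\Downarrow}_j$ with $v^{\Downarrow}_j\in U^{\Downarrow}_j$ (possible since $U_i\subseteq U_{[i]}=U^{\Downarrow}_{[i]}$), and define $\Delta v=v^{\Downarrow}_i$. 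Then $\Delta U_i\subseteq U^{\Downarrow}_i$ by construction, giving \eqref{eq:DeltaU21}. For \eqref{eq:DeltaU22} one computes $(\Delta-I)v=v^{\Downarrow}_i-v=-\sum_{j<i}v^{\Downarrow}_j\in U^{\Downarrow}_{[i-1]}=U_{[i-1]}$, which is exactly the required containment. Extending linearly over the direct sum $V=\sum_i U_i$ yields a well-defined $\Delta\in\mathrm{End}(V)$.

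The step I expect to be the main obstacle — really the only subtle point — is verifying that the two flags coincide and deducing the transversality $U^{\Downarrow}_i\cap U_{[i-1]}=0$ cleanly; everything else is routine triangular linear algebra once that is in place. One must be careful that $\Delta$ is defined consistently as a single endomorphism rather than just a collection of maps on the summands, but this is automatic because $V=\sum_i U_i$ is a direct sum, so specifying $\Delta$ on each $U_i$ determines it uniquely on all of $V$. I would remark that this construction makes transparent why $\Delta$ is invertible: it is unipotent-triangular with respect to the common flag, with identity diagonal blocks after identifying $U_i$ and $U^{\Downarrow}_i$ via the flag, so $\Delta-I$ is nilpotent and $\Delta$ is invertible with $\Delta^{-1}-I$ also strictly lower-triangular.
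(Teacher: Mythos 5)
Your proof is correct and complete: the flag identity $U_0+\cdots+U_i=U_0^{\Downarrow}+\cdots+U_i^{\Downarrow}$ from \eqref{eq:UUdd} is exactly the structural input needed, and your construction of $\Delta$ on $U_i$ as projection onto $U_i^{\Downarrow}$ along $U_0^{\Downarrow}+\cdots+U_{i-1}^{\Downarrow}$, together with the transversality argument for uniqueness, is the same argument given in the cited reference (where $\Delta$ is defined via the projections associated with the two split decompositions). Nothing is missing.
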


\begin{lemma}{\rm \cite[Lemmas 9.3 and 9.6]{bockting1}.}\label{lemma:Delta^(-1)}
The map $\Delta$ is invertible.  Moreover
$\Delta^{-1}=\Delta^{\Downarrow}$ and
\begin{align}
(\Delta^{-1} -I)U_i\subseteq U_0+U_1+\cdots +U_{i-1} \qquad\qquad (0\leq i\leq d). \label{eq:Delta^{-1}U}
\end{align}
\end{lemma}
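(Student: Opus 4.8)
The plan is to first establish invertibility of $\Delta$, then verify that $\Delta^{-1}$ satisfies the conditions characterizing $\Delta^{\Downarrow}$ and invoke the uniqueness in Lemma \ref{lemma:DD'} applied to $\Phi^{\Downarrow}$, and finally to read off the claimed inclusion. The organizing device throughout is the flag $F_i = U_0 + U_1 + \cdots + U_i$ for $0 \leq i \leq d$.

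First I would exploit this flag. By (\ref{eq:UUdd}) we have $F_i = U_0^{\Downarrow} + \cdots + U_i^{\Downarrow}$, so the defining condition (\ref{eq:DeltaU21}) gives $\Delta U_i \subseteq U_i^{\Downarrow} \subseteq F_i$, whence $\Delta F_i \subseteq F_i$. Moreover (\ref{eq:DeltaU22}) says that $\Delta$ induces the identity on each quotient $F_i/F_{i-1}$. Thus, relative to $\{F_i\}_{i=0}^d$, the map $\Delta$ is unipotent, i.e. $\Delta = I + N$ with $N$ strictly lowering the flag and hence nilpotent; therefore $\Delta$ is invertible and $\Delta^{-1}$ is itself unipotent relative to $\{F_i\}$. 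This already yields the final inclusion of the lemma, since $(\Delta^{-1} - I)U_i \subseteq (\Delta^{-1} - I)F_i \subseteq F_{i-1} = U_0 + U_1 + \cdots + U_{i-1}$.

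Next I would upgrade (\ref{eq:DeltaU21}) to an equality $\Delta U_i = U_i^{\Downarrow}$. Because $\Delta$ is injective and preserves both $F_i$ and $F_{i-1}$, it induces an isomorphism of $F_i/F_{i-1}$; comparing dimensions via $\dim U_i = \dim U_i^{\Downarrow} = \rho_i$ together with $F_i = F_{i-1} \oplus U_i = F_{i-1} \oplus U_i^{\Downarrow}$ forces $\Delta U_i = U_i^{\Downarrow}$, and hence $\Delta^{-1} U_i^{\Downarrow} = U_i$. For the remaining condition I would take $v \in U_i^{\Downarrow}$, write $v = \Delta u$ with $u \in U_i$, and compute $(\Delta^{-1} - I)v = u - \Delta u = -(\Delta - I)u \in F_{i-1}$ by (\ref{eq:DeltaU22}); that is, $(\Delta^{-1} - I)U_i^{\Downarrow} \subseteq U_0^{\Downarrow} + \cdots + U_{i-1}^{\Downarrow}$.

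Finally, since the second inversion exchanges the two split decompositions---so that $\{U_i^{\Downarrow}\}_{i=0}^d$ is the first split decomposition of $\Phi^{\Downarrow}$ and $\{U_i\}_{i=0}^d$ its second---the two properties just verified for $\Delta^{-1}$ are exactly the characterizing conditions (\ref{eq:DeltaU21}), (\ref{eq:DeltaU22}) for the map $\Delta^{\Downarrow}$ attached to $\Phi^{\Downarrow}$. By the uniqueness assertion of Lemma \ref{lemma:DD'} applied to $\Phi^{\Downarrow}$, I conclude $\Delta^{-1} = \Delta^{\Downarrow}$. I expect the main obstacle to be the careful bookkeeping of how $\Downarrow$ interchanges $\{U_i\}$ and $\{U_i^{\Downarrow}\}$ and the accompanying reindexing of the defining conditions, together with the dimension count needed to promote $\Delta U_i \subseteq U_i^{\Downarrow}$ to an equality; the unipotence argument itself is routine.
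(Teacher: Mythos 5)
Your argument is correct and complete. Note, however, that the paper does not prove this lemma at all: it is imported verbatim from \cite[Lemmas 9.3 and 9.6]{bockting1}, so there is no internal proof to compare against. What you have written is a self-contained derivation from the characterization in Lemma \ref{lemma:DD'} together with the facts recorded in Section \ref{section:U}, and each step checks out: the flag $F_i=U_0+\cdots+U_i$ is $\Delta$-invariant because \eqref{eq:UUdd} gives $U_i^{\Downarrow}\subseteq F_i$; the condition $(\Delta-I)U_i\subseteq F_{i-1}$ then makes $\Delta-I$ strictly flag-lowering, hence nilpotent, so $\Delta$ is invertible with $(\Delta^{-1}-I)F_i\subseteq F_{i-1}$, which is the displayed inclusion; injectivity plus $\dim U_i=\dim U_i^{\Downarrow}=\rho_i$ upgrades $\Delta U_i\subseteq U_i^{\Downarrow}$ to equality; and the computation $(\Delta^{-1}-I)\Delta u=-(\Delta-I)u$ gives the second defining condition for the $\Delta$-map of $\Phi^{\Downarrow}$, whose first and second split decompositions are indeed $\{U_i^{\Downarrow}\}_{i=0}^d$ and $\{U_i\}_{i=0}^d$ respectively, so the uniqueness clause of Lemma \ref{lemma:DD'} applied to $\Phi^{\Downarrow}$ yields $\Delta^{-1}=\Delta^{\Downarrow}$. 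The only substantive external input you use beyond Lemma \ref{lemma:DD'} is the equality of dimensions $\dim U_i=\dim U_i^{\Downarrow}$, which the paper records (as $\rho_i$) in Section \ref{section:U}, so nothing is missing.
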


\begin{lemma}
The map $\Delta-I$ is nilpotent.
Moreover  $\Delta K=B\Delta$.
\end{lemma}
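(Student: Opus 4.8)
The plan is to establish the two assertions separately, since they are logically distinct. For the nilpotency of $\Delta - I$, I would argue directly from the defining property (\ref{eq:DeltaU22}). The key observation is that $(\Delta - I)$ lowers the index of the first split decomposition: for each $i$ we have $(\Delta - I)U_i \subseteq U_0 + U_1 + \cdots + U_{i-1}$. Since $\{U_i\}_{i=0}^d$ is a decomposition of $V$, iterating this containment shows that $(\Delta - I)^{i+1}$ annihilates $U_i$ for each $i$. Taking $i = d$, we obtain $(\Delta - I)^{d+1}U_i = 0$ for all $i$, hence $(\Delta - I)^{d+1} = 0$ on $V$. This mirrors exactly the structure already exploited for $\psi$ in Lemma \ref{lemma:psiU}, where $\psi U_i \subseteq U_{i-1}$ yields $\psi^{d+1} = 0$.

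For the relation $\Delta K = B \Delta$, I would exploit the fact that $K$ and $B$ act as scalars on the first and second split decompositions respectively, together with the two containment properties of $\Delta$ from Lemma \ref{lemma:DD'}. The idea is to test the claimed identity against each $U_i$. Fix $i$ and take $v \in U_i$. Then $Kv = q^{d-2i}v$, so $\Delta K v = q^{d-2i}\Delta v$. On the other hand, $\Delta v \in U_i^{\Downarrow}$ by (\ref{eq:DeltaU21}), and $B$ acts on $U_i^{\Downarrow}$ as the scalar $q^{d-2i}$ by Definition \ref{def:KB}, so $B\Delta v = q^{d-2i}\Delta v$ as well. Comparing the two expressions gives $\Delta K v = B \Delta v$ on $U_i$, and since $V = \sum_{i=0}^d U_i$, the operator identity $\Delta K = B\Delta$ follows. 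The essential point making this work cleanly is that the eigenvalues of $K$ on $U_i$ and of $B$ on $U_i^{\Downarrow}$ are the same scalar $q^{d-2i}$, so $\Delta$ intertwines $K$ and $B$ precisely because it maps $U_i$ into $U_i^{\Downarrow}$.

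I do not expect a serious obstacle here; both parts are direct consequences of the structural properties already recorded. The only point requiring mild care is the nilpotency argument, where one must verify the iterated containment rigorously. Concretely, one shows by induction on $k$ that $(\Delta - I)^k U_i \subseteq U_0 + U_1 + \cdots + U_{i-k}$, with the convention that the right-hand side is $0$ when $i - k < 0$; the base case $k=1$ is (\ref{eq:DeltaU22}), and the inductive step applies (\ref{eq:DeltaU22}) termwise to each summand $U_j$ with $j \leq i - k$. Setting $k = i+1$ gives annihilation of $U_i$, and then $k = d+1$ handles all of $V$. For the intertwining relation, the argument is genuinely a one-line eigenvalue comparison once the two key facts $\Delta U_i \subseteq U_i^{\Downarrow}$ and $(B - q^{d-2i}I)U_i^{\Downarrow} = 0$ are in hand, so no real difficulty arises.
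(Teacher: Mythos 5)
Your proof is correct and follows essentially the same route as the paper: the nilpotency is obtained by iterating the containment (\ref{eq:DeltaU22}) down the first split decomposition, and the intertwining relation $\Delta K = B\Delta$ is the eigenvalue comparison on each $U_i$ using (\ref{eq:DeltaU21}) and Definition \ref{def:KB}. The paper merely states these two deductions without the explicit induction, so your write-up is a fleshed-out version of the same argument.
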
 
\begin{proof}
The first assertion follows from \eqref{eq:DeltaU22}.
The last assertion follows from \eqref{eq:DeltaU21} and Definition \ref{def:KB}.
\end{proof}

The map $\Delta$ is characterized as follows.

\begin{lemma}{\rm \cite[Lemma 9.8]{bockting1}.}\label{lemma:DeltaEV}
The map $\Delta$ is the unique element of ${\rm End}(V)$ such that 
\begin{align}
&(\Delta -I)E_i^{*} V\subseteq E_0^{*}V+E_1^{*}V+\cdots +E_{i-1}^{*}V &\ (0\leq i \leq d),\label{Delta1}\\
&\Delta (E_iV+E_{i+1}V+\cdots+E_d V)= E_0 V+E_{1}V+\cdots+E_{d-i} V &(0\leq i \leq d). \label{Delta2}
\end{align}
\end{lemma}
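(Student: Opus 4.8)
The plan is to establish the uniqueness characterization of $\Delta$ stated in Lemma \ref{lemma:DeltaEV} by showing that the two conditions (\ref{Delta1}), (\ref{Delta2}) are equivalent to the defining conditions (\ref{eq:DeltaU21}), (\ref{eq:DeltaU22}) from Lemma \ref{lemma:DD'}, and then invoking the uniqueness already guaranteed there. The key tool is the dictionary between the split decompositions and the eigenspace flags of $A,A^*$ provided by equations (\ref{eq:EU}), (\ref{eq:EUdd}), (\ref{eq:UUdd}). Since $\Delta$ from Lemma \ref{lemma:DD'} already exists and is unique, it suffices to prove that a map satisfies (\ref{eq:DeltaU21}), (\ref{eq:DeltaU22}) if and only if it satisfies (\ref{Delta1}), (\ref{Delta2}); the uniqueness in the new characterization then transfers automatically.

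First I would translate condition (\ref{eq:DeltaU22}) into condition (\ref{Delta1}). By (\ref{eq:UUdd}) we have $E_0^*V+\cdots+E_i^*V=U_0+\cdots+U_i$ for $0\le i\le d$, so the partial-sum flag of the dual idempotents coincides with the partial-sum flag of the first split decomposition. Summing (\ref{eq:DeltaU22}) over the relevant indices and using this identification shows that $(\Delta-I)$ maps $E_0^*V+\cdots+E_i^*V$ into $U_0+\cdots+U_{i-1}=E_0^*V+\cdots+E_{i-1}^*V$. A short induction on $i$, subtracting consecutive flag conditions, then upgrades this to the eigenspace-level statement $(\Delta-I)E_i^*V\subseteq E_0^*V+\cdots+E_{i-1}^*V$, which is exactly (\ref{Delta1}); the converse direction reverses the same induction.

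Next I would derive (\ref{Delta2}) from (\ref{eq:DeltaU21}) together with (\ref{eq:DeltaU22}). Fix $i$ and consider the subspace $E_iV+\cdots+E_dV$, which by (\ref{eq:EU}) equals $U_i+\cdots+U_d$. Applying $\Delta$ and using (\ref{eq:DeltaU21}) gives $\Delta(U_j)\subseteq U_j^\Downarrow$ for each $j$, so $\Delta(U_i+\cdots+U_d)\subseteq U_i^\Downarrow+\cdots+U_d^\Downarrow$; by (\ref{eq:EUdd}) this last sum is $E_0V+\cdots+E_{d-i}V$, giving containment in (\ref{Delta2}). For the reverse containment I would use that $\Delta$ is invertible with $\Delta^{-1}=\Delta^\Downarrow$ (Lemma \ref{lemma:Delta^(-1)}): since $\dim(E_iV+\cdots+E_dV)=\sum_{j\ge i}\rho_j=\dim(E_0V+\cdots+E_{d-i}V)$ because $\dim E_jV=\dim U_j=\rho_j$, the containment of equal-dimensional spaces under the injective map $\Delta$ forces equality, yielding (\ref{Delta2}).

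The main obstacle I anticipate is handling the equality (as opposed to mere containment) in (\ref{Delta2}) cleanly and showing the converse implication, namely that (\ref{Delta1}), (\ref{Delta2}) force (\ref{eq:DeltaU21}), (\ref{eq:DeltaU22}). The forward direction is a straightforward flag-chasing argument, but the converse requires recovering the individual spaces $U_i^\Downarrow$ from the intersection definitions. The cleanest route is to observe that $U_i^\Downarrow=(E_0^*V+\cdots+E_i^*V)\cap(E_0V+\cdots+E_{d-i}V)$ and that a map satisfying (\ref{Delta1}) preserves the first flag while (\ref{Delta2}) sends the second $A$-flag to the appropriate $A$-flag; intersecting the images and using that $\Delta$ respects both flags simultaneously recovers $\Delta U_i\subseteq U_i^\Downarrow$. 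Once both directions are in place, the conditions are logically equivalent to those of Lemma \ref{lemma:DD'}, so the unique map of that lemma is the unique map satisfying (\ref{Delta1}), (\ref{Delta2}), completing the proof.
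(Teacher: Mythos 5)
Your argument is essentially correct, but note that the paper itself offers no proof of this lemma: it is imported verbatim from \cite[Lemma~9.8]{bockting1}, so there is no in-paper argument to compare against. Your route --- showing that conditions \eqref{Delta1}, \eqref{Delta2} are equivalent to \eqref{eq:DeltaU21}, \eqref{eq:DeltaU22} via the flag identities \eqref{eq:EU}, \eqref{eq:EUdd}, \eqref{eq:UUdd} and the intersection definitions of $U_i$ and $U_i^\Downarrow$, then transferring uniqueness from Lemma~\ref{lemma:DD'} --- is the natural one and works. Two small points of hygiene. First, no ``induction subtracting flag conditions'' is needed for \eqref{Delta1}: since $E_i^*V\subseteq E_0^*V+\cdots+E_i^*V=U_0+\cdots+U_i$, condition \eqref{eq:DeltaU22} applied termwise gives $(\Delta-I)E_i^*V\subseteq U_0+\cdots+U_{i-1}=E_0^*V+\cdots+E_{i-1}^*V$ directly, and conversely \eqref{Delta1} gives $(\Delta-I)(U_0+\cdots+U_i)\subseteq U_0+\cdots+U_{i-1}$, which contains the statement for $U_i$ alone. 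Second, your dimension count for the equality in \eqref{Delta2} as written leans on the symmetry $\rho_j=\rho_{d-j}$, which the paper never states; it is cleaner to compute $\dim(E_0V+\cdots+E_{d-i}V)=\dim(U_i^\Downarrow+\cdots+U_d^\Downarrow)=\sum_{j=i}^d\rho_j$ directly from \eqref{eq:EUdd} and the fact that $\dim U_j^\Downarrow=\rho_j$, after which injectivity of $\Delta$ (Lemma~\ref{lemma:Delta^(-1)}) upgrades the containment $\Delta(U_i+\cdots+U_d)\subseteq U_i^\Downarrow+\cdots+U_d^\Downarrow$ to equality. The converse step, intersecting the preserved $E^*$-flag with the image of the $A$-flag to land in $U_i^\Downarrow=(E_0^*V+\cdots+E_i^*V)\cap(E_0V+\cdots+E_{d-i}V)$, is exactly right.
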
 

\begin{thm}{\rm \cite[Theorem 17.1]{bockting1}.}  \label{thm:Deltapoly}
Both
\begin{align}
\Delta &= \sum_{i=0}^d \left(\prod_{j=1}^i \frac{aq^{j-1}-a^{-1}q^{1-j}}{q^j-q^{-j}}\right)\psi^i, 
\label{eq:MT0}\\
\Delta^{-1} &=\sum_{i=0}^d \left(\prod_{j=1}^i \frac{a^{-1}q^{j-1}-aq^{1-j}}{q^j-q^{-j}}\right)\psi^i. 
\label{eq:MT00}
\end{align}
\end{thm}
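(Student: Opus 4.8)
The plan is to prove the first identity \eqref{eq:MT0} directly and then obtain \eqref{eq:MT00} from it by the second inversion $\Downarrow$. Write $\Delta'$ for the right-hand side of \eqref{eq:MT0}, with coefficients $c_i = \prod_{j=1}^i \frac{aq^{j-1}-a^{-1}q^{1-j}}{q^j-q^{-j}}$, so that $c_0=1$. This $\Delta'$ is a well-defined polynomial of degree at most $d$ in $\psi$, since $\psi^{d+1}=0$ by Lemma~\ref{lemma:psiU}, and $\Delta' = I + (\text{nilpotent})$ is invertible. By the uniqueness in Lemma~\ref{lemma:DD'} it suffices to check that $\Delta'$ satisfies \eqref{eq:DeltaU21} and \eqref{eq:DeltaU22}. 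The key idea is to recast the harder condition \eqref{eq:DeltaU21} as the intertwining relation $\Delta' K = B\Delta'$.

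Condition \eqref{eq:DeltaU22} is immediate: since $c_0=1$ we have $\Delta'-I = \sum_{i=1}^d c_i\psi^i$, and $\psi^i U_j \subseteq U_{j-i}$ by Lemma~\ref{lemma:psiU}, so $(\Delta'-I)U_j \subseteq U_0 + \cdots + U_{j-1}$ for $0\le j\le d$. For condition \eqref{eq:DeltaU21} I would first establish $\Delta' K = B\Delta'$ and then argue as follows. For $v\in U_i$ one has $Kv = q^{d-2i}v$ by Definition~\ref{def:KB}, hence $B\Delta' v = \Delta' K v = q^{d-2i}\Delta' v$. Since $B$ is diagonalizable and the scalars $q^{d-2i}$ for $0\le i\le d$ are mutually distinct (here using $q^{2i}\ne 1$ for $1\le i\le d$ from Lemma~\ref{note:main}(ii)), the $q^{d-2i}$-eigenspace of $B$ is exactly $U_i^\Downarrow$, so $\Delta' v \in U_i^\Downarrow$, as required.

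The heart of the argument is therefore the relation $\Delta' K = B\Delta'$, and this is where the real computation lies. Using $BK^{-1} = (I-aq\psi)(I-a^{-1}q\psi)^{-1}$ from Theorem~\ref{thm:BK} and the commutation $K\psi = q^2\psi K$ from Lemma~\ref{lemma:Kpsi} (whence $K\psi^i = q^{2i}\psi^i K$), I would rewrite $B\Delta' = \Delta' K$ by substituting $B=(I-aq\psi)(I-a^{-1}q\psi)^{-1}K$, pushing $K$ to the right, and cancelling the invertible factor $K$. Writing $f(\psi)=\sum_{i=0}^d c_i\psi^i$ and noting $Kf(\psi)=f(q^2\psi)K$ with $f(q^2\psi)=\sum_i c_iq^{2i}\psi^i$, the relation collapses to the single formal identity
\[
(I-aq\psi)\,f(q^2\psi) = (I-a^{-1}q\psi)\,f(\psi),
\]
valid in $\mathrm{End}(V)$ because all factors are polynomials in $\psi$ and terms of degree $>d$ vanish. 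Comparing the coefficient of $\psi^i$ for $1\le i\le d$ yields $c_i(q^{2i}-1) = c_{i-1}(aq^{2i-1}-a^{-1}q)$, that is $c_i = c_{i-1}\frac{aq^{i-1}-a^{-1}q^{1-i}}{q^i-q^{-i}}$ after multiplying numerator and denominator by $q^{-i}$. This is precisely the recursion satisfied by the product defining $c_i$, so the identity holds, establishing $\Delta' K = B\Delta'$ and hence $\Delta'=\Delta$. I expect the main obstacle to be purely the bookkeeping of this reduction: keeping the noncommuting factors $K$, $B$, $\psi$ in the correct order while clearing $(I-a^{-1}q\psi)^{-1}$, and confirming that the truncation $\psi^{d+1}=0$ introduces no boundary discrepancy in the coefficient comparison.

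Finally, \eqref{eq:MT00} follows from \eqref{eq:MT0} by applying the inversion $\Downarrow$. Under $\Downarrow$ one has $K^\Downarrow = B$, $B^\Downarrow = K$, and $\psi^\Downarrow = \psi$, the last because $\Downarrow$ interchanges the two expressions in each row of Lemma~\ref{thm:psiequations}; moreover the parameter $a$ is replaced by $a^{-1}$, as is seen from the reordered eigenvalue sequence $\theta_i^\Downarrow = \theta_{d-i} = aq^{2i-d}+a^{-1}q^{d-2i}$. Applying \eqref{eq:MT0} to $\Phi^\Downarrow$ and using $\Delta^\Downarrow = \Delta^{-1}$ from Lemma~\ref{lemma:Delta^(-1)} then yields \eqref{eq:MT00}.
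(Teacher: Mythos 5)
Your proposal is correct. Note, however, that the paper does not actually prove this theorem: it is imported by citation from \cite[Theorem 17.1]{bockting1}, so there is no in-paper proof to compare against line by line. What you have written is a legitimate, self-contained derivation from the facts the paper does record, and its strategy is exactly the one the paper itself uses later to prove Theorem \ref{thm:Deltaqexp}: establish the intertwining relation $\tilde\Delta K = B\tilde\Delta$ to get $\tilde\Delta U_i\subseteq U_i^\Downarrow$, observe that $\tilde\Delta-I$ is a polynomial in $\psi$ with zero constant term to get the triangularity condition, and invoke the uniqueness in Lemma \ref{lemma:DD'}. Your reduction of $\Delta'K=B\Delta'$ to the identity $(I-aq\psi)f(q^2\psi)=(I-a^{-1}q\psi)f(\psi)$ via Theorem \ref{thm:BK} and $K\psi^i=q^{2i}\psi^iK$ is sound, the coefficient recursion $c_i(q^{2i}-1)=c_{i-1}(aq^{2i-1}-a^{-1}q)$ matches the stated product, and you correctly flag that the only mismatch occurs in the $\psi^{d+1}$ coefficient, which is harmless since $\psi^{d+1}=0$ by Lemma \ref{lemma:psiU}. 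The passage to \eqref{eq:MT00} via $\Downarrow$ is also right: $K^\Downarrow=B$, $\psi^\Downarrow=\psi$, $a^\Downarrow=a^{-1}$ (from $\theta_{d-i}=a^{-1}q^{d-2i}+aq^{2i-d}$, up to the stated non-uniqueness of $q,a,b$), and $\Delta^\Downarrow=\Delta^{-1}$ by Lemma \ref{lemma:Delta^(-1)}. What your route buys is independence from \cite{bockting1} for this particular formula, at the cost of relying on Theorem \ref{thm:BK} and Lemma \ref{lemma:DD'}, which are themselves imported; an alternative already hinted at in Corollary \ref{cor:prodsum} is to deduce the power series from the exponential factorization via the $q$-binomial theorem.
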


In \eqref{eq:MT0} and \eqref{eq:MT00}, the elements $\Delta,\Delta^{-1}$ are expressed as a power series in $\psi$.  In the present paper, we factor these power series and interpret the results.  This interpretation will involve a linear transformation $\mathcal{M}$.  We introduce $\mathcal{M}$ in the next section.

%%%%%%%%%%%%%%%
%%%%%%%%%%%%%%%
%%%%%%%%%%%%%%%
\section{The linear transformation $\mathcal{M}$}\label{section:M}
%%%%%%%%%%%%%%%
%%%%%%%%%%%%%%%
%%%%%%%%%%%%%%%

We continue to discuss the situation of Assumption \ref{assump:main}.  In this section we introduce an element $\mathcal{M} \in{\rm End}(V)$. We explain how $\mathcal{M}$ is related to $K,B,\psi,A$.

\begin{definition}{\rm \label{def:M}
Define  $\mathcal{M} \in {\rm End}(V)$ by
\begin{equation}
\mathcal{M} =\frac{aK-a^{-1}B}{a-a^{-1}}.\label{eq:Mdef}
\end{equation}
}\end{definition}

By construction, $\mathcal{M}^\Downarrow=\mathcal{M} $.  
Evaluating \eqref{eq:Mdef} using Lemma \ref{lemma:invertible1}, we see that $\mathcal{M}$ is invertible.

\begin{lemma}\label{lemma:Mequal}
The map $\mathcal{M}$ is equal to each of:
\begin{align*}
(I-a^{-1}q\psi)^{-1}K, \qquad K(I-a^{-1}q^{-1}\psi)^{-1}, \qquad (I-aq\psi)^{-1}B, \qquad B(I-aq^{-1}\psi)^{-1}.
\end{align*}
\end{lemma}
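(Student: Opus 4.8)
The plan is to verify each of the four claimed identities by clearing the inverse factor and reducing to a relation that is already recorded in Theorem \ref{thm:BK}. The one structural fact that makes everything work is that each factor of the form $I-cq^{\pm 1}\psi$ is a polynomial in $\psi$, so all such factors, together with their inverses (which exist by Lemma \ref{lemma:invertible2}), commute with one another. This lets the substitutions below telescope. Note also that each of the four target expressions pairs off naturally with exactly one of the four relations in \eqref{eq:BK-1}, \eqref{eq:BK-2}: a ``left'' expression $(\cdots)^{-1}K$ or $(\cdots)^{-1}B$ will be cleared by multiplying on the left, while a ``right'' expression $K(\cdots)^{-1}$ or $B(\cdots)^{-1}$ will be cleared on the right.

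First I would prove $\mathcal{M}=(I-a^{-1}q\psi)^{-1}K$. Since $I-a^{-1}q\psi$ is invertible, it suffices to show $(I-a^{-1}q\psi)\mathcal{M}=K$. I would insert the definition \eqref{eq:Mdef} of $\mathcal{M}$ and replace $B$ using the first relation of \eqref{eq:BK-1}, namely $B=(I-aq\psi)(I-a^{-1}q\psi)^{-1}K$. Left-multiplying the $B$-term by $I-a^{-1}q\psi$ cancels the inverse factor (the two $\psi$-polynomials commute), leaving $(I-aq\psi)K$, so that
\[
(I-a^{-1}q\psi)\mathcal{M}=\frac{a(I-a^{-1}q\psi)-a^{-1}(I-aq\psi)}{a-a^{-1}}\,K=\frac{(a-a^{-1})I}{a-a^{-1}}\,K=K,
\]
where the two $q\psi$ contributions cancel. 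This yields the first identity.

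The remaining three identities follow by the identical pattern, each matched to the appropriate relation of Theorem \ref{thm:BK}. For $\mathcal{M}=(I-aq\psi)^{-1}B$ I would clear on the left and substitute $K=(I-a^{-1}q\psi)(I-aq\psi)^{-1}B$ from the second relation of \eqref{eq:BK-1}; for $\mathcal{M}=K(I-a^{-1}q^{-1}\psi)^{-1}$ I would clear on the right and substitute $B=K(I-aq^{-1}\psi)(I-a^{-1}q^{-1}\psi)^{-1}$ from the first relation of \eqref{eq:BK-2}; and for $\mathcal{M}=B(I-aq^{-1}\psi)^{-1}$ I would clear on the right and substitute $K=B(I-a^{-1}q^{-1}\psi)(I-aq^{-1}\psi)^{-1}$ from the second relation of \eqref{eq:BK-2}. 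In every case the inverse factor cancels and the linear-in-$\psi$ terms telescope to $(a-a^{-1})I$, exactly as above. Alternatively, the two $B$-identities can be deduced from the two $K$-identities by applying the second inversion $\Downarrow$, using $\mathcal{M}^{\Downarrow}=\mathcal{M}$ and $K^{\Downarrow}=B$, together with $\psi^{\Downarrow}=\psi$ (read off from the $\Downarrow$-invariance of the four expressions in Lemma \ref{thm:psiequations}) and the fact that $\Downarrow$ replaces $a$ by $a^{-1}$.

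I expect the only real obstacle to be organizational rather than conceptual: one must match each of the four targets with the correct relation from Theorem \ref{thm:BK} and clear the denominator on the correct side, so that the substituted inverse factor cancels cleanly and $K$ (resp. $B$) remains on the proper side throughout. Once the factors are placed correctly, the commutativity of the $\psi$-polynomials does the rest, and no use of the relation $K\psi=q^2\psi K$ from Lemma \ref{lemma:Kpsi} is actually needed.
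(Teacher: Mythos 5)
Your proposal is correct and follows essentially the same route as the paper: both start from Definition \ref{def:M} and clear the relevant denominator using the four identities of Theorem \ref{thm:BK} (with invertibility supplied by Lemma \ref{lemma:invertible2}), so that the linear terms in $\psi$ cancel to $(a-a^{-1})I$. Your version merely writes out in detail the computation the paper leaves implicit, and your observations about which side to clear on and about not needing $K\psi=q^2\psi K$ are accurate.
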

\begin{proof}
We first show that $\mathcal{M}=(I-a^{-1}q\psi)^{-1}K$. By Definition \ref{def:M}, 
\begin{equation*}
(a-a^{-1})\mathcal{M}  K^{-1}=aI-a^{-1}BK^{-1}.
\end{equation*}
The result follows from this fact along with the equation on the left in \eqref{eq:BK-1}.

The remaining assertions follow from Theorem \ref{thm:BK}.
\end{proof}

Lemma \ref{lemma:Mequal} can be reformulated as follows.
\begin{lemma}\label{lemma:K-M} 
We have 
\begin{align}
K&=\left(I-a^{-1}q\psi\right)\mathcal{M}, \qquad &&K=\mathcal{M} \left(I-a^{-1}q^{-1}\psi\right), 
\label{eq:K-M}\\
B&=\left(I-aq\psi\right)\mathcal{M},\qquad &&B=\mathcal{M}\left(I-aq^{-1}\psi\right). \label{eq:B-M}
\end{align}
\end{lemma}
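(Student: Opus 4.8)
The plan is to derive the four equations of Lemma~\ref{lemma:K-M} directly from Lemma~\ref{lemma:Mequal} by a sequence of elementary algebraic manipulations. Lemma~\ref{lemma:Mequal} asserts that $\mathcal{M}$ equals each of the four expressions
$(I-a^{-1}q\psi)^{-1}K$, $K(I-a^{-1}q^{-1}\psi)^{-1}$, $(I-aq\psi)^{-1}B$, and $B(I-aq^{-1}\psi)^{-1}$. Each of these is an expression of $\mathcal{M}$ in which $K$ (or $B$) is multiplied on one side by the inverse of one of the factors appearing in Lemma~\ref{lemma:invertible2}, which are known to be invertible. The idea is simply to clear denominators: multiply each identity through by the appropriate invertible factor and rearrange.

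Concretely, for the first equation of \eqref{eq:K-M} I would start from $\mathcal{M}=(I-a^{-1}q\psi)^{-1}K$ and left-multiply both sides by $I-a^{-1}q\psi$, which is invertible by Lemma~\ref{lemma:invertible2}. This yields $(I-a^{-1}q\psi)\mathcal{M}=K$, which is exactly the first claim. For the second equation of \eqref{eq:K-M}, I would start from $\mathcal{M}=K(I-a^{-1}q^{-1}\psi)^{-1}$ and right-multiply both sides by $I-a^{-1}q^{-1}\psi$ to obtain $\mathcal{M}(I-a^{-1}q^{-1}\psi)=K$. The two equations of \eqref{eq:B-M} are handled identically, starting from $\mathcal{M}=(I-aq\psi)^{-1}B$ and $\mathcal{M}=B(I-aq^{-1}\psi)^{-1}$ respectively, and multiplying by the corresponding invertible factor on the appropriate side.

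There is no substantive obstacle here; this lemma is genuinely just a reformulation of the preceding one, as the phrasing ``Lemma~\ref{lemma:Mequal} can be reformulated as follows'' signals. The only point requiring any care is bookkeeping of the side on which one multiplies: the two expressions for $K$ differ in whether the factor $I-a^{-1}q^{\pm 1}\psi$ sits to the left of $\mathcal{M}$ or to the right, so one must multiply $(I-a^{-1}q\psi)^{-1}K$ on the \emph{left} but $K(I-a^{-1}q^{-1}\psi)^{-1}$ on the \emph{right} to preserve the stated ordering. Since $\psi$ and $\mathcal{M}$ do not commute in general, one cannot freely reorder factors, so the left/right distinction must be respected throughout. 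Invertibility of all four factors $I-a^{\pm 1}q^{\pm 1}\psi$ is precisely what Lemma~\ref{lemma:invertible2} provides, so each multiplication step is legitimate. Assembling these four one-line rearrangements gives \eqref{eq:K-M} and \eqref{eq:B-M}, completing the proof.
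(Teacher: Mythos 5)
Your proposal is correct and matches the paper's treatment: the paper presents this lemma as an immediate reformulation of Lemma~\ref{lemma:Mequal}, obtained exactly by clearing the invertible factors $(I-a^{\pm 1}q^{\pm 1}\psi)^{-1}$ on the appropriate side. Your care about left versus right multiplication is the only point of substance, and you handle it correctly.
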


For later use, we give several descriptions of $\mathcal{M}^{\pm 1}$.

\begin{lemma}\label{lemma:Minv-exp}
The map
$\mathcal{M}^{-1}$ is equal to each of:
\begin{align*}
K^{-1}(I-a^{-1}q\psi), \qquad (I-a^{-1}q^{-1}\psi)K^{-1}, \qquad B^{-1}(I-aq\psi), \qquad (I-aq^{-1}\psi)B^{-1}.
\end{align*}
\end{lemma}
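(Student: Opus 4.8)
The plan is to obtain the four descriptions of $\mathcal{M}^{-1}$ by simply inverting the four expressions for $\mathcal{M}$ given in Lemma \ref{lemma:Mequal}. Since Lemma \ref{lemma:Mequal} asserts
$$\mathcal{M}=(I-a^{-1}q\psi)^{-1}K=K(I-a^{-1}q^{-1}\psi)^{-1}=(I-aq\psi)^{-1}B=B(I-aq^{-1}\psi)^{-1},$$
and each factor appearing here is invertible (the four maps $I-a^{\pm 1}q^{\pm 1}\psi$ are invertible by Lemma \ref{lemma:invertible2}, the maps $K,B$ are invertible by the remark following Definition \ref{def:KB}, and $\mathcal{M}$ itself is invertible by the remark following Definition \ref{def:M}), I can take the inverse of each product using the rule $(XY)^{-1}=Y^{-1}X^{-1}$.

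Concretely, first I would invert $\mathcal{M}=(I-a^{-1}q\psi)^{-1}K$ to get $\mathcal{M}^{-1}=K^{-1}(I-a^{-1}q\psi)$; inverting $\mathcal{M}=K(I-a^{-1}q^{-1}\psi)^{-1}$ gives $\mathcal{M}^{-1}=(I-a^{-1}q^{-1}\psi)K^{-1}$; inverting $\mathcal{M}=(I-aq\psi)^{-1}B$ gives $\mathcal{M}^{-1}=B^{-1}(I-aq\psi)$; and inverting $\mathcal{M}=B(I-aq^{-1}\psi)^{-1}$ gives $\mathcal{M}^{-1}=(I-aq^{-1}\psi)B^{-1}$. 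These are exactly the four expressions in the statement, so the result follows immediately.

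Alternatively, and perhaps more cleanly for exposition, I could read the four claims directly off Lemma \ref{lemma:K-M}. Each of the four equations in \eqref{eq:K-M}--\eqref{eq:B-M} expresses $K$ or $B$ as a product of $\mathcal{M}$ with one of the invertible maps $I-a^{\pm1}q^{\pm1}\psi$; solving each such equation for $\mathcal{M}$ and then inverting, or equivalently multiplying through by $K^{-1}$ or $B^{-1}$ on the appropriate side, yields the four descriptions of $\mathcal{M}^{-1}$ at once. There is essentially no obstacle here: the only point requiring care is bookkeeping of left-versus-right placement of factors, since $\psi$ need not commute with $K$ or $B$ (indeed $K\psi=q^2\psi K$ by Lemma \ref{lemma:Kpsi}), so I must preserve the order of each product when inverting and not inadvertently commute factors past one another.

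\begin{proof}
Each of the maps $I-a^{-1}q\psi$, $I-a^{-1}q^{-1}\psi$, $I-aq\psi$, $I-aq^{-1}\psi$ is invertible by Lemma \ref{lemma:invertible2}, and each of $K,B,\mathcal{M}$ is invertible. Inverting the four expressions for $\mathcal{M}$ in Lemma \ref{lemma:Mequal} using $(XY)^{-1}=Y^{-1}X^{-1}$ yields the four claimed descriptions of $\mathcal{M}^{-1}$.
\end{proof}
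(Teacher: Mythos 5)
Your proof is correct and takes essentially the same approach as the paper, which simply cites Lemma \ref{lemma:Mequal} as immediate; you spell out the inversion $(XY)^{-1}=Y^{-1}X^{-1}$ and the needed invertibility facts, which is exactly the intended argument.
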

\begin{proof}
Immediate from Lemma \ref{lemma:Mequal}.
\end{proof}

\begin{lemma}\label{lemma:Msum}
The map
$\mathcal{M}$ is equal to each of:
\begin{equation}
	K\sum_{n=0}^d a^{-n}q^{-n}\psi^n,\qquad
	\sum_{n=0}^d a^{-n}q^{n}\psi^nK,\qquad
	B\sum_{n=0}^d a^{n}q^{-n}\psi^n,\qquad
	\sum_{n=0}^d a^{n}q^{n}\psi^n B
\end{equation}
\end{lemma}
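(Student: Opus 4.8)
The plan is to obtain all four expressions by a direct substitution, combining the two immediately preceding results: Lemma~\ref{lemma:Mequal}, which already realizes $\mathcal{M}$ as each of $(I-a^{-1}q\psi)^{-1}K$, $K(I-a^{-1}q^{-1}\psi)^{-1}$, $(I-aq\psi)^{-1}B$, and $B(I-aq^{-1}\psi)^{-1}$, and Lemma~\ref{lemma:invertible2}, which expands each inverted factor as a polynomial in $\psi$. Since the former writes $\mathcal{M}$ in closed form with a single inverted factor and a single copy of $K$ or $B$, all that remains is to replace that inverted factor by its power series.

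Concretely, for the first claimed expression I would begin from $\mathcal{M}=K(I-a^{-1}q^{-1}\psi)^{-1}$ and insert $(I-a^{-1}q^{-1}\psi)^{-1}=\sum_{n=0}^d a^{-n}q^{-n}\psi^n$ from~\eqref{eq:psiinv4'}, which yields $\mathcal{M}=K\sum_{n=0}^d a^{-n}q^{-n}\psi^n$. The second expression follows from $\mathcal{M}=(I-a^{-1}q\psi)^{-1}K$ together with $(I-a^{-1}q\psi)^{-1}=\sum_{n=0}^d a^{-n}q^{n}\psi^n$ from~\eqref{eq:psiinv1'}; the third from $\mathcal{M}=B(I-aq^{-1}\psi)^{-1}$ together with $(I-aq^{-1}\psi)^{-1}=\sum_{n=0}^d a^{n}q^{-n}\psi^n$ from~\eqref{eq:psiinv4'}; and the fourth from $\mathcal{M}=(I-aq\psi)^{-1}B$ together with $(I-aq\psi)^{-1}=\sum_{n=0}^d a^{n}q^{n}\psi^n$ from~\eqref{eq:psiinv1'}.

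Because these steps are purely substitution of known identities, there is no substantive obstacle. The only point demanding care is bookkeeping: one must pair each of the four factorizations of $\mathcal{M}$ in Lemma~\ref{lemma:Mequal} with the matching inverse formula in Lemma~\ref{lemma:invertible2}, and keep track of whether the factor $K$ (resp. $B$) lies to the left or right of the inverted factor. This ordering determines whether the resulting power series sits before or after $K$ (resp. $B$), and hence produces precisely the placement of $\psi^n$ relative to $K$ and $B$ displayed in the four expressions of the statement.
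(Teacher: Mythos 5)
Your proposal is correct and is exactly the paper's argument: the paper's proof reads ``Use Lemma \ref{lemma:invertible2} and Lemma \ref{lemma:Mequal},'' and you have simply carried out that substitution explicitly, with each of the four pairings matched correctly.
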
 
\begin{proof}
Use Lemma \ref{lemma:invertible2} and Lemma \ref{lemma:Mequal}.
\end{proof}

We now give some attractive equations that show how $\mathcal{M}$ is related to $\psi, K,B,A$.

\begin{lemma}\label{lemma:Mpsi}
We have 
\begin{equation}
\mathcal{M} \psi=q^2\psi \mathcal{M} .
\end{equation}
\end{lemma}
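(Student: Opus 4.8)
The plan is to derive this directly from the definition of $\mathcal{M}$ and the two commutation relations already recorded for $K$ and $B$. By Definition \ref{def:M}, $\mathcal{M}$ is the fixed $\mathbb{K}$-linear combination $(aK-a^{-1}B)/(a-a^{-1})$ of $K$ and $B$, and by Lemma \ref{lemma:Kpsi} each of $K$ and $B$ commutes with $\psi$ up to the same factor $q^2$, namely $K\psi=q^2\psi K$ and $B\psi=q^2\psi B$. Since this $q^2$ factor is identical for both generators, I expect it to pass unchanged to any scalar linear combination of $K$ and $B$, and in particular to $\mathcal{M}$.

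Concretely, I would multiply the expression $(aK-a^{-1}B)/(a-a^{-1})$ on the right by $\psi$, move $\psi$ to the left past $K$ and past $B$ using Lemma \ref{lemma:Kpsi}, and then factor the common scalar $q^2$ together with the operator $\psi$ out on the left. The operator left standing is exactly $(aK-a^{-1}B)/(a-a^{-1})=\mathcal{M}$, which yields $\mathcal{M}\psi=q^2\psi\mathcal{M}$ with no further manipulation.

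There is no real obstacle here: the coefficients $a$, $-a^{-1}$ and the prefactor $1/(a-a^{-1})$ are scalars in $\mathbb{K}$ and hence commute freely with $\psi$, and the scalar $a-a^{-1}$ is nonzero (equivalently $a^2\neq 1$, which holds by Lemma \ref{note:main}) so that $\mathcal{M}$ is well-defined, as already noted in the discussion following Definition \ref{def:M}. Thus the identity is immediate, and the only point worth stating explicitly in the write-up is that the same $q^2$ arises from both $K$ and $B$, so that it survives the passage to the linear combination $\mathcal{M}$.
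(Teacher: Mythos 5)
Your proposal is correct and matches the paper's own proof, which likewise cites only Lemma \ref{lemma:Kpsi} and Definition \ref{def:M}: since $K$ and $B$ each satisfy the same relation $X\psi=q^2\psi X$, the scalar linear combination $\mathcal{M}=(aK-a^{-1}B)/(a-a^{-1})$ inherits it by linearity. Nothing further is needed.
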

\begin{proof}
Use Lemma \ref{lemma:Kpsi} and Definition \ref{def:M}.  
\end{proof}

\begin{lemma}\label{lemma:qM^K}
We have
\begin{align}
\frac{q\mathcal{M}^{-1}K-q^{-1}K\mathcal{M}^{-1}}{q-q^{-1}}=I,\qquad\qquad \frac{q\mathcal{M}^{-1}B-q^{-1}B\mathcal{M}^{-1}}{q-q^{-1}}=I.\label{eq:MinvK}
\end{align}
\end{lemma}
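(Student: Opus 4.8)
The plan is to verify each of the two identities by turning its numerator into a polynomial of degree at most one in $\psi$, using the factorizations of $K$ and $B$ recorded in Lemma \ref{lemma:K-M}; the linear terms then cancel, leaving exactly $(q-q^{-1})I$.

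First I would handle the equation on the left of \eqref{eq:MinvK}. From the left-hand factorization $K=(I-a^{-1}q\psi)\mathcal{M}$ in \eqref{eq:K-M}, right-multiplying by $\mathcal{M}^{-1}$ gives $K\mathcal{M}^{-1}=I-a^{-1}q\psi$. From the right-hand factorization $K=\mathcal{M}(I-a^{-1}q^{-1}\psi)$ in \eqref{eq:K-M}, left-multiplying by $\mathcal{M}^{-1}$ gives $\mathcal{M}^{-1}K=I-a^{-1}q^{-1}\psi$. Substituting these into the numerator yields
\[
q\mathcal{M}^{-1}K-q^{-1}K\mathcal{M}^{-1}=q\left(I-a^{-1}q^{-1}\psi\right)-q^{-1}\left(I-a^{-1}q\psi\right),
\]
in which the two $a^{-1}\psi$-contributions cancel and the scalar part collapses to $(q-q^{-1})I$. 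Dividing by $q-q^{-1}$ gives the first identity.

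For the equation on the right I would argue identically, now using \eqref{eq:B-M}: the factorizations $B=(I-aq\psi)\mathcal{M}$ and $B=\mathcal{M}(I-aq^{-1}\psi)$ give $B\mathcal{M}^{-1}=I-aq\psi$ and $\mathcal{M}^{-1}B=I-aq^{-1}\psi$ respectively, so that $q\mathcal{M}^{-1}B-q^{-1}B\mathcal{M}^{-1}=(q-q^{-1})I$ once the $a\psi$-terms cancel. There is essentially no analytic obstacle here: all the substantive content is already packaged into Lemma \ref{lemma:K-M}, and the only point worth noticing is that the left and right one-sided factorizations of $K$ (resp. of $B$) differ precisely by the substitution $q\mapsto q^{-1}$ in the coefficient of $\psi$. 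This is exactly what forces the linear terms to cancel, so the only thing to get right is the bookkeeping of which factorization is applied on which side.
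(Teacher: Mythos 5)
Your proof is correct and is essentially the paper's own argument: the paper's proof simply says ``Use Lemma \ref{lemma:Minv-exp},'' which packages the same one-sided factorizations $K\mathcal{M}^{-1}=I-a^{-1}q\psi$ and $\mathcal{M}^{-1}K=I-a^{-1}q^{-1}\psi$ (and their $B$-analogues) that you extract from Lemma \ref{lemma:K-M}. The cancellation of the $\psi$-terms you describe is exactly the intended computation.
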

\begin{proof}
Use Lemma \ref{lemma:Minv-exp}.
\end{proof}

\begin{lemma}\label{lemma:qAM^}
We have
\begin{align}
\frac{qA\mathcal{M}^{-1}-q^{-1}\mathcal{M}^{-1}A}{q-q^{-1}}=(a+a^{-1})I-(q+q^{-1})\psi.\label{eq:qAM^}
\end{align}
\end{lemma}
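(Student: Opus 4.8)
The plan is to reduce the statement to a commutator computation in the algebra generated by $A$, $K$, and $\psi$, using the three structural relations available: the $K$-expansions of $\mathcal{M}^{-1}$ from Lemma \ref{lemma:Minv-exp}, the $q$-commutation $K\psi=q^2\psi K$ from Lemma \ref{lemma:Kpsi}, and the two relations linking $A$ to $K$ and to $\psi$, namely the $q$-Weyl relation $\tfrac{qKA-q^{-1}AK}{q-q^{-1}}=aK^2+a^{-1}I$ of Lemma \ref{lemma:AKq-Weyl} and the commutator formula of Lemma \ref{lemma:Apsi}. The idea is to substitute $\mathcal{M}^{-1}=K^{-1}(I-a^{-1}q\psi)$ into the left-hand numerator $qA\mathcal{M}^{-1}-q^{-1}\mathcal{M}^{-1}A$, expand, and push every $\psi$ past $K^{\pm1}$ using Lemma \ref{lemma:Kpsi} so that the expression splits into a ``$K$-part'' and a ``$\psi$-part.''

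First I would carry out the expansion. Substituting the $K$-form of $\mathcal{M}^{-1}$ and using $K^{-1}\psi=q^{-2}\psi K^{-1}$, the numerator becomes
\begin{equation*}
\bigl(qAK^{-1}-q^{-1}K^{-1}A\bigr)-a^{-1}\bigl(A\psi K^{-1}-q^{-2}\psi K^{-1}A\bigr).
\end{equation*}
For the first bracket I would conjugate the $q$-Weyl relation of Lemma \ref{lemma:AKq-Weyl} by $K^{-1}$ to obtain $qAK^{-1}-q^{-1}K^{-1}A=(q-q^{-1})(aI+a^{-1}K^{-2})$; equivalently, dividing by $q$, $AK^{-1}-q^{-2}K^{-1}A=(1-q^{-2})(aI+a^{-1}K^{-2})$. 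For the second bracket I would invoke Lemma \ref{lemma:Apsi} to replace $A\psi$ by $\psi A$ plus the explicit commutator term $(q-q^{-1})[(I-aq\psi)K-(I-a^{-1}q^{-1}\psi)K^{-1}]$, which moves $\psi$ to the far left and reduces the second bracket to $\psi(AK^{-1}-q^{-2}K^{-1}A)$ minus a correction term built from that commutator times $K^{-1}$. Feeding in the conjugated Weyl identity then expresses the entire second bracket in terms of $\psi$, $\psi K^{-2}$, and $K^{-2}$ alone.

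The final step is to collect like terms. After substituting all the pieces, the result is a linear combination of $I$, $K^{-2}$, $\psi$, and $\psi K^{-2}$, and the content of the lemma is that the $K^{-2}$ and $\psi K^{-2}$ coefficients must vanish while the $I$ and $\psi$ coefficients must combine to $(q-q^{-1})(a+a^{-1})$ and $-(q-q^{-1})(q+q^{-1})=-(q^2-q^{-2})$ respectively. I expect the main obstacle to be precisely this bookkeeping: the cancellation of the $K$-dependent terms is the crux, and it relies on the elementary identities $(q-q^{-1})q^{-1}=1-q^{-2}$ and $(q-q^{-1})q=q^2-1$ to force the $\psi K^{-2}$ and $K^{-2}$ coefficients to zero. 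Once these cancellations are verified and the numerator is seen to equal $(q-q^{-1})[(a+a^{-1})I-(q+q^{-1})\psi]$, dividing by $q-q^{-1}$ yields the claim. As a consistency check one can rerun the computation starting instead from the mixed pair $\mathcal{M}^{-1}=K^{-1}(I-a^{-1}q\psi)$ and $\mathcal{M}^{-1}=(I-a^{-1}q^{-1}\psi)K^{-1}$, or from the $B$-forms of Lemma \ref{lemma:Minv-exp} together with the second relation in Lemma \ref{lemma:AKq-Weyl}, which should give the same answer and guards against sign errors.
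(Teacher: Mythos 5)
Your proposal is correct and follows exactly the route the paper intends: its proof of this lemma simply cites Lemmas \ref{lemma:AKq-Weyl}, \ref{lemma:Kpsi}, \ref{lemma:Apsi}, and \ref{lemma:Minv-exp}, which are precisely the four ingredients you combine, and your expansion and cancellation of the $K^{-2}$ and $\psi K^{-2}$ terms checks out. You have just written out the computation the paper leaves to the reader.
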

\begin{proof}
Use Lemma \ref{lemma:AKq-Weyl}, Lemma \ref{lemma:Kpsi}, Lemma \ref{lemma:Apsi}, and Lemma \ref{lemma:Minv-exp}. 
\end{proof}

\begin{lemma}\label{lemma:M^-2A}
We have
\begin{equation}
\mathcal{M}^{-2}A-(q^2+q^{-2})\mathcal{M}^{-1}A\mathcal{M}^{-1}+A\mathcal{M}^{-2}=-(q-q^{-1})^2(a+a^{-1})\mathcal{M}^{-1}.
\end{equation}
\end{lemma}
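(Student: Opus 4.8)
The plan is to derive this second-order relation from the first-order $q$-commutator relation in Lemma \ref{lemma:qAM^} by a standard left/right multiplication argument. First I would abbreviate the right-hand side of Lemma \ref{lemma:qAM^} by setting $C = (a+a^{-1})I - (q+q^{-1})\psi$, so that Lemma \ref{lemma:qAM^} reads $qA\mathcal{M}^{-1} - q^{-1}\mathcal{M}^{-1}A = (q-q^{-1})C$.

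Next I would multiply this single relation on the left by $\mathcal{M}^{-1}$, and separately on the right by $\mathcal{M}^{-1}$. Solving the first for $\mathcal{M}^{-2}A$ and the second for $A\mathcal{M}^{-2}$ expresses each of these in terms of $\mathcal{M}^{-1}A\mathcal{M}^{-1}$ together with a term involving $C$; explicitly one gets $\mathcal{M}^{-2}A = q^2\mathcal{M}^{-1}A\mathcal{M}^{-1} - q(q-q^{-1})\mathcal{M}^{-1}C$ and $A\mathcal{M}^{-2} = q^{-2}\mathcal{M}^{-1}A\mathcal{M}^{-1} + q^{-1}(q-q^{-1})C\mathcal{M}^{-1}$. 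Adding the two, the $\mathcal{M}^{-1}A\mathcal{M}^{-1}$ contributions combine into exactly the coefficient $q^2+q^{-2}$, so that $\mathcal{M}^{-2}A - (q^2+q^{-2})\mathcal{M}^{-1}A\mathcal{M}^{-1} + A\mathcal{M}^{-2}$ equals $(q-q^{-1})\bigl(q^{-1}C\mathcal{M}^{-1} - q\mathcal{M}^{-1}C\bigr)$.

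It then remains to show that $q^{-1}C\mathcal{M}^{-1} - q\mathcal{M}^{-1}C = -(q-q^{-1})(a+a^{-1})\mathcal{M}^{-1}$. For this I would use Lemma \ref{lemma:Mpsi}, which after taking inverses gives $\mathcal{M}^{-1}\psi = q^{-2}\psi\mathcal{M}^{-1}$. Substituting the definition of $C$ and commuting $\psi$ past $\mathcal{M}^{-1}$, the two $\psi$-terms acquire matching coefficients and cancel, while the scalar $(a+a^{-1})I$ terms produce the factor $q^{-1}-q$; this yields the displayed identity, and combining with the previous step gives the stated result with the correct factor $-(q-q^{-1})^2(a+a^{-1})$.

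The computation is essentially routine once the multiplication scheme is set up, so the only real obstacle is bookkeeping with the scalars $q^{\pm 1}$. The conceptual point worth flagging is that the simplicity of the right-hand side hinges entirely on the exact cancellation of the $\psi$-terms, which in turn relies on the precise exponent in the commutation relation $\mathcal{M}\psi = q^2\psi\mathcal{M}$ of Lemma \ref{lemma:Mpsi} matching the coefficients $q^{\pm 1}$ appearing in the double commutator.
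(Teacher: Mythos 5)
Your proposal is correct, and it follows exactly the route the paper intends: its proof of this lemma is simply ``Use Lemma \ref{lemma:Mpsi} and Lemma \ref{lemma:qAM^},'' and your left/right multiplication by $\mathcal{M}^{-1}$ followed by the cancellation of the $\psi$-terms via $\mathcal{M}^{-1}\psi = q^{-2}\psi\mathcal{M}^{-1}$ is the standard way to carry that out. All the scalar bookkeeping checks out, so nothing needs to be changed.
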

\begin{proof}
Use Lemma \ref{lemma:Mpsi} and Lemma \ref{lemma:qAM^}.
\end{proof}

%%%%%%%%%%%%%%%
%%%%%%%%%%%%%%%
%%%%%%%%%%%%%%%
\section{A factorization of $\Delta$}\label{section:factorization}
%%%%%%%%%%%%%%%
%%%%%%%%%%%%%%%
%%%%%%%%%%%%%%%

We continue to discuss the situation of Assumption \ref{assump:main}.   
We now bring in the $q$-exponential function \cite{gasper}. 
In \cite[Theorem 17.1]{bockting1} we expressed $\Delta$ as a power series in $\psi$.  In this section we strengthen this result in the following way.  We express $\Delta$ as a product of two linear transformations; one is a $q$-exponential in $\psi$ and the other is a $q^{-1}$-exponential in $\psi$.\\

For an integer $n$, define 
\begin{equation}
[n]_q=\frac{q^n-q^{-n}}{q-q^{-1}}
\end{equation}
and for $n\geq 0$, define
\begin{equation}
[n]_q^!=[n]_q[n-1]_q\cdots [1]_q.
\end{equation}
We interpret $[0]_q^!=1$.\\

We now recall the $q$-exponential function \cite{gasper}.  
For a nilpotent $T\in {\rm End}(V)$,
\begin{align}
\exp_q(T)=\sum_{n=0}^\infty \frac{q^{\binom{n}{2}}}{[n]_q^!} T^n.\label{eq:qexpdef}
\end{align}
The map $\exp_q(T)$ is invertible.  Its inverse is given by
\begin{align}
\exp_{q^{-1}}(-T)=\sum_{n=0}^\infty \frac{(-1)^n q^{-\binom{n}{2}}}{[n]_q^!} T^n.\label{eq:qexpinv}
\end{align}
Using \eqref{eq:qexpdef} we obtain
\begin{align}
(I-(q^2-1)T)\exp_q(q^2T)=\exp_q(T).\label{eq:qexp1}
\end{align}
For $S\in {\rm End}(V)$ such that $ST=q^2TS$,  we have 
\begin{align*}
S\exp_q(T)S^{-1}=\exp_q(STS^{-1})=\exp_q(q^2T).
\end{align*}
Consequently 
\begin{align}
S\exp_q(T)=\exp_q(q^2T)S.\label{eq:qexp2}
\end{align}
Combining \eqref{eq:qexp1} and \eqref{eq:qexp2},
\begin{align}
(I-(q^2-1)T)S\exp_q(T)=\exp_q(T)S.\label{eq:qexp3}
\end{align}

We return our attention to $K,B,\psi,\mathcal{M}$.

\begin{prop}\label{prop:KexpM}
Both
\begin{align}
&K \exp_q\left(\frac{a^{-1}}{q-q^{-1}}\psi\right)=\exp_q\left(\frac{a^{-1}}{q-q^{-1}}\psi\right) \mathcal{M} ,\label{eq:KexpM}\\
&B \exp_q\left(\frac{a}{q-q^{-1}}\psi\right)=\exp_q\left(\frac{a}{q-q^{-1}}\psi\right) \mathcal{M} .\label{eq:BexpM}
\end{align}
\end{prop}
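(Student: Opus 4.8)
The plan is to deduce both identities from the general operator identity \eqref{eq:qexp3}, namely $(I-(q^2-1)T)S\exp_q(T)=\exp_q(T)S$, which holds whenever $ST=q^2TS$. The decisive choice is to take $S=\mathcal{M}$ (rather than $K$ or $B$), since then the factor $(I-(q^2-1)T)S$ will collapse directly to $K$ or $B$ via Lemma \ref{lemma:K-M}.

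For the first equation I would set $T=\frac{a^{-1}}{q-q^{-1}}\psi$ and $S=\mathcal{M}$. First I would verify the commutation hypothesis $ST=q^2TS$: since $\mathcal{M}\psi=q^2\psi\mathcal{M}$ by Lemma \ref{lemma:Mpsi}, and $T$ is a scalar multiple of $\psi$, the relation $\mathcal{M} T=q^2 T\mathcal{M}$ follows at once. Applying \eqref{eq:qexp3} then gives $(I-(q^2-1)T)\mathcal{M}\exp_q(T)=\exp_q(T)\mathcal{M}$. Next I would simplify the scalar factor: using $q-q^{-1}=(q^2-1)/q$, one has $\frac{q^2-1}{q-q^{-1}}=q$, so $(q^2-1)T=a^{-1}q\psi$, and hence $(I-(q^2-1)T)\mathcal{M}=(I-a^{-1}q\psi)\mathcal{M}$. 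By the left-hand equation in \eqref{eq:K-M}, this equals exactly $K$. Substituting back yields $K\exp_q(T)=\exp_q(T)\mathcal{M}$, which is \eqref{eq:KexpM}.

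The second equation is entirely analogous. I would set $T=\frac{a}{q-q^{-1}}\psi$ and again $S=\mathcal{M}$; the hypothesis $\mathcal{M} T=q^2T\mathcal{M}$ holds for the same reason. Here $(q^2-1)T=aq\psi$, so $(I-(q^2-1)T)\mathcal{M}=(I-aq\psi)\mathcal{M}$, which equals $B$ by the left-hand equation in \eqref{eq:B-M}. Applying \eqref{eq:qexp3} then produces $B\exp_q(T)=\exp_q(T)\mathcal{M}$, establishing \eqref{eq:BexpM}.

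The substantive point, and the only place where genuine insight is needed, is the selection $S=\mathcal{M}$: this is precisely the conjugating operator for which the prefactor $(I-(q^2-1)T)$ arising in \eqref{eq:qexp3} matches the factor $(I-a^{-1}q\psi)$ (respectively $(I-aq\psi)$) appearing in the factorizations $K=(I-a^{-1}q\psi)\mathcal{M}$ and $B=(I-aq\psi)\mathcal{M}$ of Lemma \ref{lemma:K-M}. Everything else is the scalar reduction $\frac{q^2-1}{q-q^{-1}}=q$ and the verification of the $q^2$-commutation relation, both of which are immediate from Lemma \ref{lemma:Mpsi}. I do not anticipate any real obstacle once $S=\mathcal{M}$ is chosen.
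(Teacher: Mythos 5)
Your proof is correct and follows essentially the same route as the paper: both apply \eqref{eq:qexp3} with $S=\mathcal{M}$ and $T=\frac{a^{-1}}{q-q^{-1}}\psi$ (resp.\ $T=\frac{a}{q-q^{-1}}\psi$), then identify $(I-(q^2-1)T)\mathcal{M}$ with $K$ (resp.\ $B$) via Lemma \ref{lemma:Mequal}/Lemma \ref{lemma:K-M}. Your write-up just spells out the scalar reduction $\frac{q^2-1}{q-q^{-1}}=q$ that the paper leaves implicit.
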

\begin{proof}
Recall from Lemma \ref{lemma:Mpsi} that $\mathcal{M} \psi=q^2\psi \mathcal{M}$. 
We first obtain (\ref{eq:KexpM}).  To do this, in \eqref{eq:qexp3} take $S=\mathcal{M}$ and $T=\frac{a^{-1}}{q-q^{-1}}\psi$.  Evaluate the result using the equation $\mathcal{M}=(I-a^{-1}q\psi)^{-1}K$ from Lemma \ref{lemma:Mequal}.

Next we obtain (\ref{eq:BexpM}).  To do this, in \eqref{eq:qexp3} take $S=\mathcal{M}$ and $T=\frac{a}{q-q^{-1}}\psi$. Evaluate the result using the equation $\mathcal{M}=(I-aq\psi)^{-1}B$ from Lemma \ref{lemma:Mequal}.
\end{proof}

\medskip

The following is our main result.

\begin{thm}\label{thm:Deltaqexp}
Both
\begin{align}
\Delta&=\exp_{q}\left(\frac{a}{q-q^{-1}}\psi\right)\exp_{q^{-1}}\left(-\frac{a^{-1}}{q-q^{-1}}\psi\right),\label{eq:Delta}\\
\Delta^{-1}&=\exp_{q}\left(\frac{a^{-1}}{q-q^{-1}}\psi\right)\exp_{q^{-1}}\left(-\frac{a}{q-q^{-1}}\psi\right).\label{eq:Deltainv}
\end{align}
\end{thm}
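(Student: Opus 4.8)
The plan is to prove \eqref{eq:Delta} by recognizing the right-hand side as a single operator that satisfies the two defining conditions of $\Delta$ in Lemma \ref{lemma:DD'}, and then to deduce \eqref{eq:Deltainv} by inverting. Write $L=\exp_q\!\big(\tfrac{a}{q-q^{-1}}\psi\big)$ and $P=\exp_q\!\big(\tfrac{a^{-1}}{q-q^{-1}}\psi\big)$. First I would note that, by the inverse formula \eqref{eq:qexpinv} applied with $T=\tfrac{a^{-1}}{q-q^{-1}}\psi$, the factor $\exp_{q^{-1}}\!\big(-\tfrac{a^{-1}}{q-q^{-1}}\psi\big)$ equals $P^{-1}$, so the claimed expression for $\Delta$ is exactly $LP^{-1}$. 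The key structural input is Proposition \ref{prop:KexpM}, which reads $KP=P\mathcal{M}$ and $BL=L\mathcal{M}$; solving each for $\mathcal{M}$ gives $P^{-1}KP=\mathcal{M}=L^{-1}BL$, and hence $LP^{-1}K=BLP^{-1}$. Thus $LP^{-1}$ intertwines $K$ and $B$.

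It then remains to check that $LP^{-1}$ meets conditions \eqref{eq:DeltaU21} and \eqref{eq:DeltaU22}. For \eqref{eq:DeltaU21}, fix $v\in U_i$, so $Kv=q^{d-2i}v$ by Definition \ref{def:KB}; applying the intertwining relation gives $B(LP^{-1}v)=LP^{-1}Kv=q^{d-2i}(LP^{-1}v)$, so $LP^{-1}v$ lies in the $q^{d-2i}$-eigenspace of $B$, which is precisely $U_i^{\Downarrow}$. This yields $LP^{-1}U_i\subseteq U_i^{\Downarrow}$. For \eqref{eq:DeltaU22}, observe from \eqref{eq:qexpdef} that both $L$ and $P^{-1}$ have constant term $I$, so $LP^{-1}-I$ is a $\mathbb{K}$-linear combination of $\psi,\psi^2,\dots,\psi^d$; since $\psi U_i\subseteq U_{i-1}$ by Lemma \ref{lemma:psiU}, we get $\psi^n U_i\subseteq U_{i-n}$ for $n\geq 1$, whence $(LP^{-1}-I)U_i\subseteq U_0+U_1+\cdots+U_{i-1}$. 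By the uniqueness in Lemma \ref{lemma:DD'}, $\Delta=LP^{-1}$, which is \eqref{eq:Delta}. Taking inverses and using \eqref{eq:qexpinv} once more, $\Delta^{-1}=(LP^{-1})^{-1}=PL^{-1}=\exp_q\!\big(\tfrac{a^{-1}}{q-q^{-1}}\psi\big)\exp_{q^{-1}}\!\big(-\tfrac{a}{q-q^{-1}}\psi\big)$, which is \eqref{eq:Deltainv}.

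Most of the genuine work has already been absorbed into Proposition \ref{prop:KexpM}, so the main point to get right is the step identifying $LP^{-1}U_i\subseteq U_i^{\Downarrow}$: this uses that the scalars $q^{d-2i}$ ($0\leq i\leq d$) are mutually distinct, so that the $q^{d-2i}$-eigenspace of $B$ is exactly the single summand $U_i^{\Downarrow}$ rather than a larger sum; this distinctness is guaranteed by $q^{2k}\neq 1$ for $1\leq k\leq d$ (Lemma \ref{note:main}(ii)). I expect no other obstacle, since the remaining verifications are formal consequences of the nilpotence of $\psi$ and the lowering property $\psi U_i\subseteq U_{i-1}$.
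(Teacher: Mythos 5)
Your proof is correct and follows essentially the same route as the paper: both combine the two intertwining relations of Proposition \ref{prop:KexpM} to get $\tilde\Delta K = B\tilde\Delta$ for the candidate operator $\tilde\Delta = LP^{-1}$, verify conditions \eqref{eq:DeltaU21} and \eqref{eq:DeltaU22} (the latter via the nilpotent-polynomial-in-$\psi$ observation and Lemma \ref{lemma:psiU}), invoke the uniqueness in Lemma \ref{lemma:DD'}, and obtain \eqref{eq:Deltainv} by inverting with \eqref{eq:qexpinv}. Your write-up is slightly more explicit than the paper's about why the intertwining relation forces $LP^{-1}U_i\subseteq U_i^{\Downarrow}$, but the argument is the same.
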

\begin{proof}
We first show (\ref{eq:Delta}). Let $\tilde\Delta $ denote the expression on the right in  (\ref{eq:Delta}).  
Combining (\ref{eq:KexpM}) and (\ref{eq:BexpM}), we see that $\tilde\Delta K=B\tilde\Delta $.  Therefore $\tilde\Delta U_i=U_i^\Downarrow$ for $0\leq i\leq d$.
Observe that $\tilde\Delta -I$ is a polynomial in $\psi$ with zero constant term.  By Lemma \ref{lemma:psiU}, $(\tilde\Delta -I)U_i\subseteq U_0+U_1+\cdots+U_{i-1}$ for $0\leq i\leq d$.  
By Lemma \ref{lemma:DD'}, $\tilde\Delta =\Delta$.

To obtain (\ref{eq:Deltainv}) from (\ref{eq:Delta}), use \eqref{eq:qexpinv}.
\end{proof}

\begin{cor}\label{cor:prodsum}
We have
\begin{align*}
\exp_{q}\left(\frac{a}{q-q^{-1}}\psi\right)\exp_{q^{-1}}\left(-\frac{a^{-1}}{q-q^{-1}}\psi\right)&=\sum_{i=0}^d \left(\prod_{j=1}^i \frac{aq^{j-1}-a^{-1}q^{1-j}}{q^j-q^{-j}}\right)\psi^i,\\
\exp_{q}\left(\frac{a^{-1}}{q-q^{-1}}\psi\right)\exp_{q^{-1}}\left(-\frac{a}{q-q^{-1}}\psi\right)&=\sum_{i=0}^d \left(\prod_{j=1}^i \frac{a^{-1}q^{j-1}-aq^{1-j}}{q^j-q^{-j}}\right)\psi^i. 
\end{align*}
\end{cor}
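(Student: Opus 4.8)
The plan is to prove Corollary \ref{cor:prodsum} by simply combining the two independent expressions we already have for $\Delta$ and $\Delta^{-1}$. The left-hand sides of the two displayed equations are precisely the products of $q$-exponentials appearing in Theorem \ref{thm:Deltaqexp}, namely \eqref{eq:Delta} and \eqref{eq:Deltainv}. The right-hand sides are precisely the power-series expressions for $\Delta$ and $\Delta^{-1}$ from Theorem \ref{thm:Deltapoly}, namely \eqref{eq:MT0} and \eqref{eq:MT00}. So each asserted identity is an equality between two things that have independently been shown equal to the same operator.

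Concretely, I would argue as follows. For the first identity, Theorem \ref{thm:Deltaqexp} gives
\[
\Delta=\exp_{q}\left(\frac{a}{q-q^{-1}}\psi\right)\exp_{q^{-1}}\left(-\frac{a^{-1}}{q-q^{-1}}\psi\right),
\]
while Theorem \ref{thm:Deltapoly} gives
\[
\Delta=\sum_{i=0}^d \left(\prod_{j=1}^i \frac{aq^{j-1}-a^{-1}q^{1-j}}{q^j-q^{-j}}\right)\psi^i.
\]
Setting these two expressions for $\Delta$ equal yields the first claimed equation. The second identity is obtained in exactly the same way by equating the two expressions for $\Delta^{-1}$ given in \eqref{eq:Deltainv} and \eqref{eq:MT00}.

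There is essentially no obstacle here: the corollary is a direct consequence of the fact that we have two distinct formulas for the same operator. The only thing worth remarking is that the equalities are genuine because $\Delta$ (and hence $\Delta^{-1}$) is a well-defined operator with a unique power-series expansion in the nilpotent $\psi$ by Lemma \ref{lemma:psiU}, so comparing the expansions term by term would also recover the scalar identities $\prod_{j=1}^i \frac{aq^{j-1}-a^{-1}q^{1-j}}{q^j-q^{-j}}$ as the coefficient of $\psi^i$ in the product of the two $q$-exponentials. Thus the proof is a one-line citation: the result is immediate from Theorem \ref{thm:Deltaqexp} and Theorem \ref{thm:Deltapoly}.
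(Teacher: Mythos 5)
Your proof is correct and matches the paper's own argument exactly: the paper likewise obtains the corollary by combining Theorem \ref{thm:Deltapoly} and Theorem \ref{thm:Deltaqexp}, equating the two expressions for $\Delta$ and for $\Delta^{-1}$. (The paper only adds the remark that the identities can alternatively be verified directly via the $q$-binomial theorem, which you also gesture at when noting the coefficient comparison.)
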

\begin{proof}
Combine Theorem \ref{thm:Deltapoly} and Theorem \ref{thm:Deltaqexp}.
The equations can also be obtained directly by expanding their left-hand sides using \eqref{eq:qexpdef} and \eqref{eq:qexpinv}, and evaluating the results using the $q$-binomial theorem \cite[Theorem 10.2.1]{andrews}. 
\end{proof}

%%%%%%%%%%%%%%%
%%%%%%%%%%%%%%%
%%%%%%%%%%%%%%%
\section{The eigenvalues and eigenspaces of $\mathcal{M}$}\label{section:W}
%%%%%%%%%%%%%%%
%%%%%%%%%%%%%%%
%%%%%%%%%%%%%%%

We continue to discuss the situation of Assumption \ref{assump:main}.   
In Section \ref{section:M} we introduced the linear transformation $\mathcal{M}$.  Proposition \ref{prop:KexpM} indicates the role of $\mathcal{M}$ in the factorization of $\Delta$ in Theorem \ref{thm:Deltaqexp}.  In this section we show that $\mathcal{M}$ is diagonalizable.  We describe the eigenvalues and eigenspaces of $\mathcal{M}$.  We also explain how the eigenspace decomposition for $\mathcal{M}$ is related to the first and second split decompositions.

\begin{lemma}\label{lemma:Meigen}
The map $\mathcal{M} $ is diagonalizable with eigenvalues $q^{d},q^{d-2},q^{d-4},\mathellipsis,q^{-d}$.
\end{lemma}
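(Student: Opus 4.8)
The plan is to show that $\mathcal{M}$ is diagonalizable with the prescribed eigenvalues by exploiting the relation $\mathcal{M}=(I-a^{-1}q\psi)^{-1}K$ from Lemma \ref{lemma:Mequal}, which exhibits $\mathcal{M}$ as $K$ conjugated (in a suitable sense) by a unipotent operator. First I would invoke Proposition \ref{prop:KexpM}, equation \eqref{eq:KexpM}, which states
\begin{equation*}
K\exp_q\left(\frac{a^{-1}}{q-q^{-1}}\psi\right)=\exp_q\left(\frac{a^{-1}}{q-q^{-1}}\psi\right)\mathcal{M}.
\end{equation*}
Writing $S=\exp_q\!\left(\frac{a^{-1}}{q-q^{-1}}\psi\right)$, this says $S^{-1}KS=\mathcal{M}$, so $\mathcal{M}$ and $K$ are similar. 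Since $S$ is invertible (its inverse is a $q^{-1}$-exponential by \eqref{eq:qexpinv}), similarity is exactly what is needed: similar operators have the same diagonalizability status and the same eigenvalues (with multiplicities).

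The second step is to record the eigenvalue data for $K$. By Definition \ref{def:KB}, for $0\leq i\leq d$ the subspace $U_i$ is an eigenspace of $K$ with eigenvalue $q^{d-2i}$, and since $\{U_i\}_{i=0}^d$ is a decomposition of $V$ (Section \ref{section:U}), the eigenspaces of $K$ span $V$; hence $K$ is diagonalizable with eigenvalues exactly $q^d,q^{d-2},\dots,q^{-d}$. These eigenvalues are mutually distinct: this is precisely guaranteed by Lemma \ref{note:main}(ii), which gives $q^{2i}\neq 1$ for $1\leq i\leq d$, so that $q^{d-2i}$ are distinct as $i$ ranges over $0,\dots,d$. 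Therefore $\mathcal{M}=S^{-1}KS$ is diagonalizable with the same eigenvalues $q^d,q^{d-2},\dots,q^{-d}$.

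Combining these two steps completes the argument: conjugation by the invertible map $S$ carries the eigenspace decomposition of $K$ to that of $\mathcal{M}$, preserving both diagonalizability and the full list of eigenvalues. I would note that this also yields the eigenspaces of $\mathcal{M}$ explicitly, namely $W_i=S^{-1}U_i$, which anticipates the formulas $U_i=\exp_q\!\left(\frac{a^{-1}}{q-q^{-1}}\psi\right)W_i$ promised in the introduction; recording this now makes the subsequent development of Section \ref{section:W} essentially immediate. An alternative route avoiding Proposition \ref{prop:KexpM} would use \eqref{eq:BK-1} or Lemma \ref{lemma:KUdd} to analyze the action of $\mathcal{M}$ on the flag $U_0+\cdots+U_i$ directly and show it is upper-triangular with diagonal entries $q^{d-2i}$, but the conjugation argument is cleaner since it delivers diagonalizability for free rather than requiring a separate eigenvector construction.

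I do not expect a serious obstacle here: the entire content is packaged into Proposition \ref{prop:KexpM}, and the only point requiring a moment's care is confirming that the eigenvalues $q^{d-2i}$ are genuinely distinct, which is where Lemma \ref{note:main}(ii) is used. The remaining work is purely the standard linear-algebra fact that similarity preserves the spectrum and diagonalizability, so no extended calculation is needed.
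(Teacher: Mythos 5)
Your proposal is correct and follows essentially the same route as the paper: the paper's proof also sets $E=\exp_q\left(\frac{a^{-1}}{q-q^{-1}}\psi\right)$, uses \eqref{eq:KexpM} to write $\mathcal{M}=E^{-1}KE$, and concludes from the diagonalizability of $K$ with eigenvalues $q^{d},q^{d-2},\dots,q^{-d}$. Your additional remarks (distinctness of the $q^{d-2i}$ via Lemma \ref{note:main}(ii), and the identification $W_i=E^{-1}U_i$) are correct elaborations of details the paper leaves implicit.
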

\begin{proof}
Let $E=\exp_{q}(\frac{a^{-1}}{q-q^{-1}}\psi)$.  By \eqref{eq:KexpM}, $\mathcal{M}=E^{-1}KE$.  
By construction $K$ is diagonalizable with eigenvalues $q^{d},q^{d-2},q^{d-4},\mathellipsis,q^{-d}$.  
The result follows.
\end{proof}

\begin{definition}\label{def:W}{\rm 
For $0\leq i\leq d$, let $W_i$ denote the eigenspace of $\mathcal{M} $ corresponding to the eigenvalue $q^{d-2i}$.  Note that $\{W_i\}_{i=0}^d$ is a decomposition of $V$, and that  $W_i^\Downarrow=W_i$ for $0\leq i\leq d$.
   For notational convenience, let $W_{-1}=0$ and $W_{d+1}=0$.  
}\end{definition}

\begin{prop} \label{prop:UW}
For $0\leq i\leq d$, 
\begin{align}
&U_i=\exp_q\left(\frac{a^{-1}}{q-q^{-1}}\psi\right) W_i,&&U_i^\Downarrow=\exp_q\left(\frac{a}{q-q^{-1}}\psi\right) W_i, \label{eq:UexpW}\\
&W_i=\exp_{q^{-1}}\left(-\frac{a^{-1}}{q-q^{-1}}\psi\right) U_i, 
&&W_i=\exp_{q^{-1}}\left(-\frac{a}{q-q^{-1}}\psi\right) U_i^\Downarrow.\label{eq:WexpU}
\end{align}
\end{prop}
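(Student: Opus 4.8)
The plan is to prove Proposition \ref{prop:UW} by exploiting the intertwining relations in Proposition \ref{prop:KexpM}, which say that the $q$-exponentials conjugate $\mathcal{M}$ into $K$ and $B$ respectively. Set $E=\exp_q\!\left(\frac{a^{-1}}{q-q^{-1}}\psi\right)$ and $F=\exp_q\!\left(\frac{a}{q-q^{-1}}\psi\right)$. Equation \eqref{eq:KexpM} reads $KE=E\mathcal{M}$, so $\mathcal{M}=E^{-1}KE$; likewise \eqref{eq:BexpM} gives $\mathcal{M}=F^{-1}BF$. The key observation is that conjugation by an invertible map sends eigenspaces to eigenspaces while preserving eigenvalues. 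Since $W_i$ is by Definition \ref{def:W} the eigenspace of $\mathcal{M}$ for the eigenvalue $q^{d-2i}$, and since $U_i$ is by Definition \ref{def:KB} the eigenspace of $K$ for the same eigenvalue $q^{d-2i}$, the first step is to show $E$ carries $W_i$ onto $U_i$.

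Concretely, first I would take $w\in W_i$, so $\mathcal{M} w=q^{d-2i}w$, and apply $E$. Using $KE=E\mathcal{M}$ we get $K(Ew)=E\mathcal{M} w=q^{d-2i}(Ew)$, so $Ew$ lies in the $q^{d-2i}$-eigenspace of $K$, namely $U_i$. Thus $E W_i\subseteq U_i$. Because $E$ is invertible (by the invertibility of $q$-exponentials noted after \eqref{eq:qexpdef}) and $\{W_i\}_{i=0}^d$, $\{U_i\}_{i=0}^d$ are both decompositions of $V$ with $\dim W_i=\dim U_i$ — both equal $\rho_i$, the common eigenspace dimension — the inclusion $EW_i\subseteq U_i$ forces equality $EW_i=U_i$ for each $i$. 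This yields the left equation in \eqref{eq:UexpW}. The same argument applied to $F$ and $B$, using $BF=F\mathcal{M}$ and Definition \ref{def:KB} identifying $U_i^\Downarrow$ as the $q^{d-2i}$-eigenspace of $B$, gives $FW_i=U_i^\Downarrow$, which is the right equation in \eqref{eq:UexpW}.

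Finally, the two equations in \eqref{eq:WexpU} follow immediately by inverting those in \eqref{eq:UexpW}. The inverse of $E=\exp_q\!\left(\frac{a^{-1}}{q-q^{-1}}\psi\right)$ is $\exp_{q^{-1}}\!\left(-\frac{a^{-1}}{q-q^{-1}}\psi\right)$ by \eqref{eq:qexpinv}, so applying this inverse to $U_i=EW_i$ gives the left equation in \eqref{eq:WexpU}; similarly inverting $F$ via \eqref{eq:qexpinv} gives the right equation. I do not anticipate a serious obstacle here, as the argument is a clean conjugation-of-eigenspaces computation. The one point requiring care is the dimension-counting step that upgrades the inclusion $EW_i\subseteq U_i$ to equality: I would justify it either by the matching dimensions $\rho_i$ cited in Section \ref{section:U}, or more cleanly by noting that $E$ is a bijection sending the direct-sum decomposition $V=\bigoplus_i W_i$ to the direct-sum decomposition $V=\bigoplus_i U_i$ while respecting the index $i$, so the inclusions $EW_i\subseteq U_i$ holding for all $i$ simultaneously must all be equalities.
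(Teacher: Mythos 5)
Your proposal is correct and follows essentially the same route as the paper: both use the intertwining relations $KE=E\mathcal{M}$ and $BF=F\mathcal{M}$ from Proposition \ref{prop:KexpM} to identify the eigenspaces of $K$ and $B$ with the images under $E$ and $F$ of the eigenspaces of $\mathcal{M}$, then invert via \eqref{eq:qexpinv}. The paper leaves the upgrade from inclusion to equality implicit (it follows directly from $K=E\mathcal{M}E^{-1}$ with $E$ invertible, without needing the dimension count), but your more explicit justification is perfectly sound.
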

\begin{proof}
Define $E$ as in the proof of Lemma \ref{lemma:Meigen}. 
We show that $U_i=EW_i$. 
By \eqref{eq:KexpM},
$KE=E\mathcal{M}$.  
Recall that $U_i$ (resp. $W_i$) is the eigenspace of $K$ (resp. $\mathcal{M}$) corresponding to the eigenvalue $q^{d-2i}$.
By these comments $U_i=EW_i$.

Define $F=\exp_{q}(\frac{a}{q-q^{-1}}\psi)$. 
We show $U_i^\Downarrow=FW_i$.
By \eqref{eq:BexpM},
$BF=F\mathcal{M}$.  
Recall that $U_i^\Downarrow$ (resp. $W_i$) is the eigenspace of $B$ (resp. $\mathcal{M}$) corresponding to the eigenvalue $q^{d-2i}$.
By these comments $U_i^\Downarrow=FW_i$.

To obtain  \eqref{eq:WexpU} from  \eqref{eq:UexpW}, use \eqref{eq:qexpinv}.
\end{proof}

\begin{lemma}\label{lemma:dimW}
For $0\leq i\leq d$, the dimension of $W_i$ is $\rho_i$.
\end{lemma}
\begin{proof}
This follows from Proposition \ref{prop:UW} and the fact that $U_i,U_i^\Downarrow$ have dimension $\rho_i$.
\end{proof}

Recall from (\ref{eq:UUdd}) that
\begin{equation}
\sum_{h=0}^i E_h^*V=\sum_{h=0}^i U_h=\sum_{h=0}^i U_h^{\Downarrow} \label{eq:sum}
\end{equation}
for $0\leq i\leq d$.

\begin{lemma}\label{lemma:Wsum}
For $0\leq i\leq d$, the sum $\sum_{h=0}^iW_h$ is equal to the common value of \rm{(\ref{eq:sum})}.
\end{lemma}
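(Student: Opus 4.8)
The plan is to exploit the nilpotency structure of the $q$-exponential operators together with the fact that $\psi$ lowers the index on all three relevant decompositions. The key observation is that $\sum_{h=0}^i W_h$, $\sum_{h=0}^i U_h$, and $\sum_{h=0}^i U_h^\Downarrow$ are all $\psi$-invariant subspaces of $V$ of the same dimension, and that the $q$-exponential in $\psi$ fixes each such ``lower-triangular'' flag.

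First I would record that by Lemma \ref{lemma:dimW} together with \eqref{eq:sum}, the subspace $\sum_{h=0}^i W_h$ has the same dimension as the common value of \eqref{eq:sum}, namely $\sum_{h=0}^i \rho_h$. So it suffices to establish a single inclusion, say $\sum_{h=0}^i W_h \subseteq \sum_{h=0}^i U_h$, and then equality follows by a dimension count. To get this inclusion I would set $E=\exp_q\left(\frac{a^{-1}}{q-q^{-1}}\psi\right)$ as in the proof of Proposition \ref{prop:UW}, so that $U_h = EW_h$ for $0\le h\le d$. Then $\sum_{h=0}^i U_h = E\left(\sum_{h=0}^i W_h\right)$, and the claim reduces to showing that the subspace $\mathcal{W}_i:=\sum_{h=0}^i W_h$ is invariant under $E$, since then $\sum_{h=0}^i U_h = E\,\mathcal{W}_i = \mathcal{W}_i$.

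The heart of the argument is therefore that $E$ stabilizes $\mathcal{W}_i$. Since $E-I$ is a polynomial in $\psi$ with zero constant term, it is enough to show $\psi\,\mathcal{W}_i \subseteq \mathcal{W}_i$. To see this, note that $\psi$ and $\mathcal{M}$ satisfy $\mathcal{M}\psi = q^2\psi\mathcal{M}$ by Lemma \ref{lemma:Mpsi}; hence if $w\in W_h$ (so $\mathcal{M} w = q^{d-2h}w$), then $\mathcal{M}(\psi w) = q^{-2}\psi\mathcal{M} w = q^{d-2(h+1)}\psi w$, which shows $\psi W_h \subseteq W_{h+1}$. This raises the index rather than lowering it, so I would instead work with the \emph{descending} flag or pass to the equivalent formulation: the commutation relation shows $\psi W_h\subseteq W_{h+1}$, so in fact $\psi\left(\sum_{h=0}^i W_h\right)\subseteq \sum_{h=1}^{i+1}W_h$, which is not contained in $\mathcal{W}_i$ in general. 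The clean fix is to argue directly on the $U$-side: by Lemma \ref{lemma:psiU} we have $\psi U_h\subseteq U_{h-1}\subseteq \sum_{h=0}^i U_h$ for all $h\le i$, so $\sum_{h=0}^i U_h$ is $\psi$-invariant, hence $E^{-1}$-invariant; applying $E^{-1}=\exp_{q^{-1}}\left(-\frac{a^{-1}}{q-q^{-1}}\psi\right)$ and using \eqref{eq:WexpU} gives $\sum_{h=0}^i W_h = E^{-1}\left(\sum_{h=0}^i U_h\right)\subseteq \sum_{h=0}^i U_h$. Combined with the equal dimensions from Lemma \ref{lemma:dimW}, this yields $\sum_{h=0}^i W_h = \sum_{h=0}^i U_h$, which by \eqref{eq:sum} is the common value claimed.

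I expect the main obstacle to be the sign/direction bookkeeping just indicated: the naive approach via $\psi W_h\subseteq W_{h+1}$ pushes in the wrong direction, so one must instead transfer the $\psi$-invariance statement to the $U$-side (where Lemma \ref{lemma:psiU} gives $\psi U_h\subseteq U_{h-1}$) and apply the inverse $q$-exponential, which is precisely the content of \eqref{eq:WexpU}. Once this is set up correctly, the argument is a short combination of Proposition \ref{prop:UW}, Lemma \ref{lemma:psiU}, and Lemma \ref{lemma:dimW}.
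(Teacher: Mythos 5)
Your final argument is correct and is essentially the paper's own proof: you show $\sum_{h=0}^i W_h\subseteq\sum_{h=0}^i U_h$ by noting that $\sum_{h=0}^i U_h$ is $\psi$-invariant (Lemma \ref{lemma:psiU}), hence invariant under the polynomial $\exp_{q^{-1}}\bigl(-\tfrac{a^{-1}}{q-q^{-1}}\psi\bigr)$, and then conclude by the dimension count from Lemma \ref{lemma:dimW}. One correction to the digression you discarded: from $\mathcal{M}\psi=q^2\psi\mathcal{M}$ and $\mathcal{M}w=q^{d-2h}w$ one gets $\mathcal{M}\psi w=q^{2}\,q^{d-2h}\psi w=q^{d-2(h-1)}\psi w$, so $\psi W_h\subseteq W_{h-1}$ (this is exactly Lemma \ref{lemma:psiW} of the paper), not $W_{h+1}$ as you computed; consequently the first approach you abandoned would also have worked, since $\sum_{h=0}^i W_h$ is in fact $\psi$-invariant.
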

\begin{proof}
Define $W=\sum_{h=0}^i W_h$ and let $U$ denote the common value of (\ref{eq:sum}).  We show that $W=U$.  By Lemma \ref{lemma:psiU} and the equation on the left in \eqref{eq:WexpU}, $W\subseteq U$. 
 By Lemma \ref{lemma:dimW}, $W$ and $U$ have the same dimension.  Thus $W=U$.
\end{proof}

%%%%%%%%%%%%%%%
%%%%%%%%%%%%%%%
%%%%%%%%%%%%%%%
\section{The actions of $\psi, K,B,\Delta,A,A^*$ on $\{W_i\}_{i=0}^d$}\label{section:actionW}
%%%%%%%%%%%%%%%
%%%%%%%%%%%%%%%
%%%%%%%%%%%%%%%

We continue to discuss the situation of Assumption \ref{assump:main}.   
Recall the eigenspace decomposition $\{W_i\}_{i=0}^d$ for  $\mathcal{M}$.
In this section, we discuss the actions of $\psi, K,B,\Delta,A,A^*$ on $\{W_i\}_{i=0}^d$.  

\begin{lemma}\label{lemma:psiW}
For $0\leq i\leq d$,
\begin{equation}
\psi W_i\subseteq W_{i-1}.
\end{equation}
\end{lemma}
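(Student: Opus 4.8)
The plan is to show $\psi W_i \subseteq W_{i-1}$ by exploiting the commutation relation $\mathcal{M}\psi = q^2\psi\mathcal{M}$ from Lemma \ref{lemma:Mpsi}, which encodes exactly how $\psi$ shifts eigenspaces of $\mathcal{M}$. The key observation is that $W_i$ is by Definition \ref{def:W} the eigenspace of $\mathcal{M}$ for the eigenvalue $q^{d-2i}$, so I want to check that $\psi$ carries a $q^{d-2i}$-eigenvector to a $q^{d-2(i-1)}=q^{d-2i+2}$-eigenvector, i.e.\ into $W_{i-1}$.

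First I would take an arbitrary $w \in W_i$, so that $\mathcal{M} w = q^{d-2i} w$. Then I would compute $\mathcal{M}(\psi w)$ using Lemma \ref{lemma:Mpsi}:
\begin{equation*}
\mathcal{M}(\psi w) = q^{-2}\psi \mathcal{M} w = q^{-2}\psi\, q^{d-2i} w = q^{d-2i+2}(\psi w) = q^{d-2(i-1)}(\psi w).
\end{equation*}
Hence $\psi w$ is either zero or an eigenvector of $\mathcal{M}$ with eigenvalue $q^{d-2(i-1)}$. Since $\mathcal{M}$ is diagonalizable (Lemma \ref{lemma:Meigen}) with the $W_h$ as its eigenspaces, this forces $\psi w \in W_{i-1}$ in all cases, using the convention $W_{-1}=0$ for the boundary value $i=0$. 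This gives $\psi W_i \subseteq W_{i-1}$ for all $0 \le i \le d$.

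There is essentially no obstacle here; the rearrangement $\mathcal{M}\psi = q^2\psi\mathcal{M} \Rightarrow \psi\mathcal{M} = q^{-2}\mathcal{M}\psi$ is the one delicate point, but it is immediate since $q^2 \ne 0$. The only thing to be slightly careful about is handling the $i=0$ case: there $\psi w$ would have to be a $q^{d+2}$-eigenvector of $\mathcal{M}$, but $q^{d+2}$ is not among the eigenvalues $q^d, q^{d-2}, \ldots, q^{-d}$ listed in Lemma \ref{lemma:Meigen}, so $\psi w = 0$, consistent with $W_{-1}=0$. (Alternatively, one could derive the claim directly from Proposition \ref{prop:UW} together with Lemma \ref{lemma:psiU} and the commutation of $\psi$ with the $q$-exponential in $\psi$, but the eigenvalue argument via Lemma \ref{lemma:Mpsi} is the cleanest.)
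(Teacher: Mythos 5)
Your proof is correct and is exactly the argument the paper intends: its proof of this lemma is the single line ``Use Lemma \ref{lemma:Mpsi},'' i.e.\ the eigenvalue-shifting computation you spell out. One small slip: from $\mathcal{M}\psi = q^{2}\psi\mathcal{M}$ the first equality in your display should read $\mathcal{M}(\psi w)=q^{2}\psi\mathcal{M}w$ (not $q^{-2}$); with that correction the intermediate exponents match your (correct) final conclusion $\mathcal{M}(\psi w)=q^{d-2(i-1)}\psi w$.
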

\begin{proof}
Use Lemma \ref{lemma:Mpsi}.
\end{proof}

\begin{lemma}
For $0\leq i\leq d$,
\begin{align}
(K-q^{d-2i}I)W_i\subseteq W_{i-1},\qquad\qquad
(B-q^{d-2i}I)W_i\subseteq W_{i-1}.
\end{align}
\end{lemma}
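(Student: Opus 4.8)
The plan is to prove each containment by relating $K$ and $B$ to $\mathcal{M}$ using the factorizations established in Lemma \ref{lemma:K-M}, and then invoking the fact that $\psi$ and $\mathcal{M}$ lower the $W$-grading. First I would recall from Lemma \ref{lemma:K-M} that $K=\left(I-a^{-1}q\psi\right)\mathcal{M}$ and $B=\left(I-aq\psi\right)\mathcal{M}$. Since each $W_i$ is by Definition \ref{def:W} the eigenspace of $\mathcal{M}$ for the eigenvalue $q^{d-2i}$, applying $\mathcal{M}$ to a vector $w\in W_i$ gives $\mathcal{M} w=q^{d-2i}w\in W_i$. Substituting this into the factorization yields, for $w\in W_i$,
\begin{equation*}
Kw=q^{d-2i}w-a^{-1}q^{d-2i+1}\psi w.
\end{equation*}
Rearranging gives $(K-q^{d-2i}I)w=-a^{-1}q^{d-2i+1}\psi w$, which lies in $\psi W_i$.

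The key step is then to observe, via Lemma \ref{lemma:psiW}, that $\psi W_i\subseteq W_{i-1}$. This immediately forces $(K-q^{d-2i}I)W_i\subseteq W_{i-1}$. The argument for $B$ is entirely parallel: using $B=\left(I-aq\psi\right)\mathcal{M}$ and the same eigenvalue substitution, I obtain $(B-q^{d-2i}I)w=-aq^{d-2i+1}\psi w\in\psi W_i\subseteq W_{i-1}$. Thus both containments follow from the single observation that $K$ and $B$ each differ from $\mathcal{M}$ by a factor of the form $I-(\text{scalar})\psi$, combined with the fact that $\psi$ shifts $W_i$ down by one.

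I expect no serious obstacle here; the statement is essentially a reformulation of the factorizations $K=\left(I-a^{-1}q\psi\right)\mathcal{M}$ and $B=\left(I-aq\psi\right)\mathcal{M}$ once one notes that $\mathcal{M}$ acts as the scalar $q^{d-2i}$ on $W_i$. The only point requiring a moment's care is confirming that the $\mathcal{M}$ appearing on the \emph{right} of the factorization (rather than on the left) is the one acting diagonally on $W_i$, which is precisely what the factorizations in Lemma \ref{lemma:K-M} provide; I would use the versions $K=\left(I-a^{-1}q\psi\right)\mathcal{M}$ and $B=\left(I-aq\psi\right)\mathcal{M}$ with $\mathcal{M}$ on the right rather than the alternate forms $K=\mathcal{M}\left(I-a^{-1}q^{-1}\psi\right)$ and $B=\mathcal{M}\left(I-aq^{-1}\psi\right)$. (Alternatively, one could run the argument with the right-hand forms, applying $\mathcal{M}$ first after the factor $I-(\text{scalar})\psi$, but this is less direct since $\psi w$ need not be an eigenvector of $\mathcal{M}$.) With that choice fixed, the proof is a two-line computation for each map.
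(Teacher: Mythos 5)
Your proof is correct and is exactly the argument the paper intends: it cites Lemma \ref{lemma:K-M} together with Lemma \ref{lemma:psiW}, and your computation $(K-q^{d-2i}I)w=-a^{-1}q^{d-2i+1}\psi w\in W_{i-1}$ (and its analogue for $B$) is precisely how those two lemmas combine. One tiny quibble with your parenthetical aside: the right-hand factorizations would work just as directly, since $\psi w$ lies in $W_{i-1}$, which \emph{is} an eigenspace of $\mathcal{M}$.
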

\begin{proof}
Use Lemma \ref{lemma:K-M} and Lemma \ref{lemma:psiW}.
\end{proof}

\begin{lemma}
For $0\leq i\leq d$,
\begin{align}
(\Delta-I)W_i&\subseteq W_0+W_1+\cdots +W_{i-1},\label{eq:DeltaW1}\\
(\Delta^{-1}-I)W_i&\subseteq W_0+W_1+\cdots +W_{i-1}\label{eq:DeltaW2}.
\end{align}
\end{lemma}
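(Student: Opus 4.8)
The plan is to exploit the fact, recorded in Theorem \ref{thm:Deltapoly}, that both $\Delta$ and $\Delta^{-1}$ are polynomials in $\psi$, together with the action of $\psi$ on the decomposition $\{W_i\}_{i=0}^d$ established in Lemma \ref{lemma:psiW}. This follows the same template as the preceding lemmas in this section, each of which reduces the action of an operator on $\{W_i\}_{i=0}^d$ to the single fact $\psi W_i \subseteq W_{i-1}$.

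First I would observe that, by Theorem \ref{thm:Deltapoly}, the map $\Delta - I$ is a polynomial in $\psi$ whose constant term vanishes: the $i=0$ summand in \eqref{eq:MT0} contributes exactly $I$, so
\[
\Delta - I = \sum_{k=1}^d c_k\,\psi^k, \qquad c_k = \prod_{j=1}^k \frac{aq^{j-1}-a^{-1}q^{1-j}}{q^j-q^{-j}}.
\]
Next, iterating Lemma \ref{lemma:psiW} gives $\psi^k W_i \subseteq W_{i-k}$ for all $k \geq 0$, under the convention $W_j = 0$ for $j < 0$. Applying $\Delta - I$ to a vector in $W_i$ and combining the two displays, each term $\psi^k W_i$ with $k \geq 1$ lands in $W_{i-k} \subseteq W_0 + W_1 + \cdots + W_{i-1}$, which yields \eqref{eq:DeltaW1}. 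The identical argument applied to the power series for $\Delta^{-1}$ in \eqref{eq:MT00} yields \eqref{eq:DeltaW2}.

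There is no substantial obstacle here: once the two inputs (the polynomial expression for $\Delta^{\pm 1}$ and the lowering action of $\psi$) are in hand, the containment is forced by index-counting. The only point requiring the slightest care is confirming that the lowest power of $\psi$ appearing in $\Delta - I$ is $\psi^1$ rather than $\psi^0$, which is precisely the statement that the constant term is $I$; this is what guarantees the image misses $W_i$ itself and lands in $W_0 + \cdots + W_{i-1}$.
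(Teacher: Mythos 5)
Your proof is correct, but it takes a different route from the paper's. The paper deduces \eqref{eq:DeltaW1} and \eqref{eq:DeltaW2} from the known containments $(\Delta-I)U_i\subseteq U_0+\cdots+U_{i-1}$ and $(\Delta^{-1}-I)U_i\subseteq U_0+\cdots+U_{i-1}$ (that is, \eqref{eq:DeltaU22} and \eqref{eq:Delta^{-1}U}) together with Lemma \ref{lemma:Wsum}, which identifies the partial sums $\sum_{h=0}^{i}W_h$ with $\sum_{h=0}^{i}U_h$: since $W_i$ sits inside $U_0+\cdots+U_i$, the operator $\Delta^{\pm 1}-I$ pushes it into $U_0+\cdots+U_{i-1}=W_0+\cdots+W_{i-1}$. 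You instead expand $\Delta^{\pm 1}-I$ as a polynomial in $\psi$ with zero constant term via Theorem \ref{thm:Deltapoly} and invoke the lowering action $\psi W_i\subseteq W_{i-1}$ from Lemma \ref{lemma:psiW}; this is the same index-counting mechanism the paper itself uses in the proof of Theorem \ref{thm:Deltaqexp} (there applied to the $U_i$). Both arguments are sound and rest only on results established earlier. The paper's version buys uniformity with the neighboring lemmas of Section \ref{section:actionW} that rely on Lemma \ref{lemma:Wsum} (e.g.\ the one for $A^*$), and it does not need the explicit coefficients of $\Delta$ as a power series; your version buys a slightly stronger conclusion for free, namely $(\Delta-I)W_i\subseteq W_0+\cdots+W_{i-1}$ refined termwise as a sum of images $\psi^k W_i\subseteq W_{i-k}$, and it makes transparent that only the vanishing of the constant term of $\Delta-I$ matters, not the specific coefficients.
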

\begin{proof}
To show (\ref{eq:DeltaW1}), use (\ref{eq:DeltaU22}) and Lemma \ref{lemma:Wsum}.

To show (\ref{eq:DeltaW2}), use \eqref{eq:Delta^{-1}U} and Lemma \ref{lemma:Wsum}.
\end{proof}

\begin{lemma}\label{lemma:AW}
For $0\leq i\leq d$,
\begin{align}
(A-(a+a^{-1})q^{d-2i}I)W_i\subseteq W_{i-1}+W_{i+1}.
\end{align}
\end{lemma}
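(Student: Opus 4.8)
The plan is to express the action of $A$ on $W_i$ in terms of operators whose action on $\{W_i\}_{i=0}^d$ is already understood, most naturally by conjugating the known commutation relation between $A$ and $\mathcal{M}^{-1}$ from Lemma \ref{lemma:qAM^}. Since $W_i$ is the eigenspace of $\mathcal{M}$ with eigenvalue $q^{d-2i}$, it is also the eigenspace of $\mathcal{M}^{-1}$ with eigenvalue $q^{2i-d}$, and $\psi$ lowers the $\mathcal{M}$-grading by one step (Lemma \ref{lemma:psiW}). So I would work directly with the relation
\begin{align*}
\frac{qA\mathcal{M}^{-1}-q^{-1}\mathcal{M}^{-1}A}{q-q^{-1}}=(a+a^{-1})I-(q+q^{-1})\psi
\end{align*}
from Lemma \ref{lemma:qAM^} and apply both sides to a vector $w\in W_i$.

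First I would evaluate the right-hand side on $w\in W_i$: the term $(a+a^{-1})I$ keeps $w$ in $W_i$, while by Lemma \ref{lemma:psiW} the term $-(q+q^{-1})\psi$ sends $w$ into $W_{i-1}$. Next I would analyze the left-hand side. Writing $Aw = \sum_j v_j$ with $v_j\in W_j$ (using that $\{W_i\}$ is a decomposition), I would apply $\mathcal{M}^{-1}$, which acts on $W_j$ as the scalar $q^{2j-d}$. The operator $qA\mathcal{M}^{-1}-q^{-1}\mathcal{M}^{-1}A$ applied to $w$ then produces, in the $W_j$-component, a scalar multiple of $v_j$, namely $(q\cdot q^{2j-d}-q^{-1}\cdot q^{2i-d})v_j$ up to the $(q-q^{-1})$ normalization. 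The key observation is that this coefficient vanishes precisely when $q^{2j-d+1}=q^{2i-d-1}$, i.e.\ when $j=i-1$; thus the $W_{i-1}$-component of $Aw$ is annihilated by the left-hand side and is therefore unconstrained, while for every other $j$ the coefficient is nonzero.

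The heart of the argument is then a matching of components across the two sides. The right-hand side has nonzero components only in $W_i$ and $W_{i-1}$. For $j\notin\{i-1,i\}$ the left-hand side coefficient of $v_j$ is a nonzero scalar while the right-hand side has no $W_j$-component, forcing $v_j=0$. This shows $Aw\in W_{i-1}+W_i+W_{i+1}$ at worst, but I must rule out $W_{i+1}$; checking the $j=i+1$ coefficient $q\cdot q^{2i+2-d}-q^{-1}\cdot q^{2i-d}=q^{2i-d+1}(q^2-q^{-2})$, which is nonzero by Lemma \ref{note:main}(ii), so $v_{i+1}=0$ as well. Matching the $W_i$-components pins down $v_i$: the coefficient of $v_i$ on the left is $q^{2i-d+1}-q^{2i-d-1}$ divided by $q-q^{-1}$, and equating with the scalar $(a+a^{-1})$ on the right yields $v_i=(a+a^{-1})q^{d-2i}w$, which is exactly the stated eigenvalue. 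This gives $(A-(a+a^{-1})q^{d-2i}I)w=v_{i-1}\in W_{i-1}$, as claimed.

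The main obstacle I anticipate is bookkeeping the $q$-powers correctly and confirming that the only nonvanishing coefficient scenarios are $j=i$ (which fixes the eigenvalue) and $j=i-1$ (which is left free), while all other $j$ force vanishing; the nonvanishing of these coefficients for $j\neq i-1$ relies on $q^{2k}\neq 1$ for the relevant small integers $k$, which is guaranteed by Lemma \ref{note:main}(ii). Everything else is a routine application of the eigenvalue scalars for $\mathcal{M}^{-1}$ together with Lemma \ref{lemma:psiW}.
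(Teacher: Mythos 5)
Your overall strategy---apply Lemma \ref{lemma:qAM^} to a vector $w\in W_i$ and match $\mathcal{M}$-eigencomponents---is sound and is essentially the paper's route (the paper first combines Lemma \ref{lemma:qAM^} with $\mathcal{M}\psi=q^{2}\psi\mathcal{M}$ to obtain Lemma \ref{lemma:M^-2A}, whose quadratic in $\mathcal{M}^{-1}$ annihilates $(A-(a+a^{-1})q^{d-2i}I)W_i$ and whose kernel is $W_{i-1}+W_{i+1}$). However, your evaluation of the left-hand side attaches the two eigenvalues to the wrong terms. In $qA\mathcal{M}^{-1}w$ the operator $\mathcal{M}^{-1}$ acts on $w$ itself, contributing the scalar $q^{2i-d}$, so this term equals $q^{2i-d+1}\sum_j v_j$; in $q^{-1}\mathcal{M}^{-1}Aw$ the operator $\mathcal{M}^{-1}$ acts on the components of $Aw$, contributing $q^{2j-d}$ to $v_j$. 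Hence the coefficient of $v_j$ is $(q\cdot q^{2i-d}-q^{-1}\cdot q^{2j-d})/(q-q^{-1})$, not $(q\cdot q^{2j-d}-q^{-1}\cdot q^{2i-d})/(q-q^{-1})$ as you wrote, and it vanishes at $j=i+1$, not at $j=i-1$.

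The swap is not cosmetic. With the correct coefficients it is the $W_{i+1}$-component $v_{i+1}$ that is left unconstrained---as it must be, since $A$ genuinely raises (for instance $(A-\theta_iI)U_i\subseteq U_{i+1}$, and in the Leonard case the matrix of $A$ with respect to $\{w_i\}_{i=0}^d$ has subdiagonal entries equal to $1$)---while the $W_{i-1}$-component is pinned to $v_{i-1}=-q^{d+1-2i}\psi w$ by matching against the $-(q+q^{-1})\psi w$ term on the right. Your version instead concludes $v_{i+1}=0$ and $(A-(a+a^{-1})q^{d-2i}I)w\in W_{i-1}$, which is strictly stronger than the lemma and false in general; moreover, under your coefficients the $W_{i-1}$-component of the left-hand side is $0$ while the right-hand side has $W_{i-1}$-component $-(q+q^{-1})\psi w$, which would force $\psi w=0$, a contradiction you did not notice. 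Once the coefficients are corrected, your component-matching does yield $(A-(a+a^{-1})q^{d-2i}I)w\in W_{i-1}+W_{i+1}$ and in substance recovers the paper's proof.
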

\begin{proof}
By Lemma \ref{lemma:M^-2A}, the expression 
\begin{equation*}
(\mathcal{M}^{-1}-q^{2i+2-d}I)(\mathcal{M}^{-1}-q^{2i-2-d}I)(A-(a+a^{-1})q^{d-2i}I)
\end{equation*}
vanishes on $W_i$.  Therefore $(\mathcal{M}^{-1}-q^{2i+2-d}I)(\mathcal{M}^{-1}-q^{2i-2-d}I)$ vanishes on $(A-(a+a^{-1})q^{d-2i}I)W_i$.
The result follows.
\end{proof}

\begin{lemma}
For $0\leq i\leq d$,
\begin{align}
(A^*-\theta_i^*I)W_i\subseteq W_0+W_1+\cdots +W_{i-1}.
\end{align}
\end{lemma}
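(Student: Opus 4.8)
The plan is to exploit the $A^*$-invariant filtration furnished by Lemma \ref{lemma:Wsum}. For $0\le i\le d$ set $\mathcal{U}_i=\sum_{h=0}^i W_h$, so that by Lemma \ref{lemma:Wsum} (together with \eqref{eq:sum}) the subspace $\mathcal{U}_i$ coincides with $\sum_{h=0}^i E_h^*V$. The whole argument then reduces to the observation that this common subspace is $A^*$-invariant and that $A^*-\theta_i^*I$ kills its top graded piece, so that the claim $(A^*-\theta_i^*I)W_i\subseteq\mathcal{U}_{i-1}$ is pure linear algebra once the identification of the two sums is in hand.

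Carrying this out, first I would note that each $E_h^*V$ is an eigenspace of $A^*$, with eigenvalue $\theta_h^*$ by Definition \ref{def:main}, hence $A^*$-invariant; summing over $0\le h\le i$ gives $A^*\mathcal{U}_i\subseteq\mathcal{U}_i$. Next, since the sum $\sum_{h=0}^d E_h^*V$ is direct, we have $\mathcal{U}_i=\mathcal{U}_{i-1}\oplus E_i^*V$, and $A^*$ acts on the summand $E_i^*V$ as the scalar $\theta_i^*$. Finally, given $w\in W_i\subseteq\mathcal{U}_i$, I would write $w=u+e$ with $u\in\mathcal{U}_{i-1}$ and $e\in E_i^*V$; then $(A^*-\theta_i^*I)w=(A^*-\theta_i^*I)u$, because $(A^*-\theta_i^*I)e=0$, and the right-hand side lies in $\mathcal{U}_{i-1}=\sum_{h=0}^{i-1}W_h$ since $A^*\mathcal{U}_{i-1}\subseteq\mathcal{U}_{i-1}$. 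This is exactly the asserted containment.

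There is no genuine obstacle once $\sum_{h=0}^i W_h=\sum_{h=0}^i E_h^*V$ is available. I would only flag that, unlike the preceding lemma for $A$ (which went through the three-term relation of Lemma \ref{lemma:M^-2A}), the $A^*$ case cannot be handled by conjugating a commutation relation through the $q$-exponential relating $W_i$ to $U_i$ in Proposition \ref{prop:UW}: the excerpt supplies a clean expression for $\psi A-A\psi$ in Lemma \ref{lemma:Apsi} but none for $\psi A^*-A^*\psi$. This is precisely why routing through the $A^*$-eigenspace filtration $\sum_{h=0}^i W_h$, rather than through $\psi$ and $\mathcal{M}$, is the efficient path, and why the proof is shorter than that of Lemma \ref{lemma:AW}.
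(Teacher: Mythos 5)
Your argument is correct and is essentially the paper's own proof: the paper likewise combines $(A^*-\theta_i^*I)E_i^*V=0$ with Lemma \ref{lemma:Wsum} to route the claim through the identification $\sum_{h=0}^i W_h=\sum_{h=0}^i E_h^*V$, and you have simply written out the linear-algebra details of that one-line reduction. No gap; nothing further is needed.
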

\begin{proof}
Use $(A^*-\theta_i^*I)E_i^*V=0$ together with \eqref{eq:DeltaU22} and Lemma \ref{lemma:Wsum}. 
\end{proof}

%%%%%%%%%%%%%%%
%%%%%%%%%%%%%%%
%%%%%%%%%%%%%%%
\section{The actions of $\mathcal{M}^{\pm 1}$ on $\{U_i\}_{i=0}^d$,$\{U_i^\Downarrow\}_{i=0}^d$, $\{E_iV\}_{i=0}^d$, $\{E_i^*V\}_{i=0}^d$}\label{section:Maction}
%%%%%%%%%%%%%%%
%%%%%%%%%%%%%%%
%%%%%%%%%%%%%%%

We continue to discuss the situation of Assumption \ref{assump:main}.   
In Section \ref{section:W} we saw how various operators act on the decomposition $\{W_i\}_{i=0}^d$.  In this section we investigate the action of $\mathcal{M} $ on the first and second split decompositions of $V$, as well as on the eigenspace decompositions of $A,A^*$.

\begin{lemma}\label{lemma:MU}
For $0\leq i\leq d$, 
\begin{align}
(\mathcal{M} -q^{d-2i}I)U_i&\subseteq U_0+U_1+\cdots+U_{i-1},\label{eq:MU1}\\
(\mathcal{M} -q^{d-2i}I)U_i^\Downarrow&\subseteq U_0^\Downarrow+U_1^\Downarrow+\cdots+U_{i-1}^\Downarrow.\label{eq:MU2}
\end{align}
\end{lemma}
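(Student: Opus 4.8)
The goal is to prove Lemma \ref{lemma:MU}, which describes the action of $\mathcal{M}$ on the first and second split decompositions. The plan is to exploit the close relationship between $\mathcal{M}$ and $K,B$ established in Lemma \ref{lemma:K-M}, together with the lowering action of $\psi$ from Lemma \ref{lemma:psiU}. The key observation is that $K$ and $B$ act diagonally on their respective split decompositions (by Definition \ref{def:KB}), while $\mathcal{M}$ differs from each of these by a correction term that is a multiple of $\psi$, which lowers the index.

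First I would prove \eqref{eq:MU1}. Starting from the equation $K=\mathcal{M}(I-a^{-1}q^{-1}\psi)$ in \eqref{eq:K-M}, I would rearrange to isolate $\mathcal{M}$, writing $\mathcal{M} = K + a^{-1}q^{-1}\mathcal{M}\psi$. Applying this to a vector in $U_i$ and using $(K-q^{d-2i}I)U_i=0$ from Definition \ref{def:KB}, I obtain
\begin{align*}
(\mathcal{M}-q^{d-2i}I)U_i = a^{-1}q^{-1}\mathcal{M}\psi U_i.
\end{align*}
By Lemma \ref{lemma:psiU}, $\psi U_i\subseteq U_{i-1}$. The remaining task is to show that $\mathcal{M}$ maps $U_0+U_1+\cdots+U_{i-1}$ into itself. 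This follows by induction on $i$: the same displayed identity shows $(\mathcal{M}-q^{d-2i}I)U_{i-1}\subseteq U_0+\cdots+U_{i-2}$, so $\mathcal{M}\,U_{i-1}\subseteq U_0+\cdots+U_{i-1}$, and summing over the lower indices gives that $U_0+\cdots+U_{i-1}$ is $\mathcal{M}$-invariant. Hence the right-hand side lands in $U_0+U_1+\cdots+U_{i-1}$, as claimed.

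For \eqref{eq:MU2} I would argue the same way, replacing $K$ by $B$: from $B=\mathcal{M}(I-aq^{-1}\psi)$ in \eqref{eq:B-M} I get $\mathcal{M}=B+aq^{-1}\mathcal{M}\psi$, and then use $(B-q^{d-2i}I)U_i^\Downarrow=0$ together with $\psi U_i^\Downarrow\subseteq U_{i-1}^\Downarrow$ from Lemma \ref{lemma:psiU}. The inductive invariance of $U_0^\Downarrow+\cdots+U_{i-1}^\Downarrow$ under $\mathcal{M}$ is established exactly as before. Alternatively, since $\mathcal{M}^\Downarrow=\mathcal{M}$ and $\psi$ behaves symmetrically under the $\Downarrow$ operation, \eqref{eq:MU2} is just the $\Downarrow$ of \eqref{eq:MU1}, which gives a one-line derivation once \eqref{eq:MU1} is in hand.

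I expect the only genuine subtlety to be the bookkeeping needed to justify that $\mathcal{M}$ preserves the partial sum $U_0+U_1+\cdots+U_{i-1}$, since $\mathcal{M}$ is not diagonal on the first split decomposition. This is handled cleanly by the induction sketched above, so there is no serious obstacle; the proof is essentially a matter of combining \eqref{eq:K-M}, \eqref{eq:B-M}, and the lowering property of $\psi$ from Lemma \ref{lemma:psiU}.
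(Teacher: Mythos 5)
Your argument is correct, but it takes a genuinely different route from the paper's for \eqref{eq:MU1}. The paper argues directly from Definition \ref{def:M}: since $\mathcal{M}=(aK-a^{-1}B)/(a-a^{-1})$, one writes $(\mathcal{M}-q^{d-2i}I)U_i$ as a combination of $(K-q^{d-2i}I)U_i$, which vanishes by Definition \ref{def:KB}, and $(B-q^{d-2i}I)U_i$, which lies in $U_0+\cdots+U_{i-1}$ by Lemma \ref{lemma:KUdd}; this is a one-line computation with no induction. You instead rewrite \eqref{eq:K-M} as $\mathcal{M}=K+a^{-1}q^{-1}\mathcal{M}\psi$ and combine the diagonal action of $K$ with the lowering property of $\psi$ from Lemma \ref{lemma:psiU}, paying for this with an induction to show that $\mathcal{M}$ preserves the partial sums $U_0+\cdots+U_{i-1}$. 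Your induction is set up correctly (the base case $i=0$ uses $\psi U_0=0$, and the step is immediate), so there is no gap. What your approach buys is independence from Lemma \ref{lemma:KUdd} --- you never need to know how $B$ acts on the first split decomposition --- and it is closer in spirit to the paper's proof of the companion result for $\mathcal{M}^{-1}$ (Lemma \ref{lemma:MinvU}), which likewise starts from a factorization involving $\psi$. What the paper's approach buys is brevity and the absence of any inductive bookkeeping. For \eqref{eq:MU2}, your ``alternative'' derivation via $\mathcal{M}^\Downarrow=\mathcal{M}$ and applying \eqref{eq:MU1} to $\Phi^\Downarrow$ is exactly what the paper does.
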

\begin{proof}
To show (\ref{eq:MU1}),
use Definition \ref{def:KB}, Lemma \ref{lemma:KUdd}, and Definition \ref{def:M}.

To show (\ref{eq:MU2}), use (\ref{eq:MU1}) applied to $\Phi^\Downarrow$, along with $\mathcal{M}^\Downarrow=\mathcal{M}$.
\end{proof}

\begin{lemma}\label{lemma:MinvU}
For $0\leq i\leq d$, 
\begin{align}
(\mathcal{M}^{-1}-q^{2i-d}I)U_i\subseteq U_{i-1},\qquad\qquad (\mathcal{M}^{-1}-q^{2i-d}I)U_i^\Downarrow \subseteq U_{i-1}^\Downarrow.\label{eq:MinvU}
\end{align}
\end{lemma}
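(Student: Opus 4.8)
The plan is to prove Lemma \ref{lemma:MinvU} by leveraging the commutation relation between $\mathcal{M}^{-1}$ and $K$ (or $B$) already established in Lemma \ref{lemma:qM^K}, together with the known action of $K$ (or $B$) on the first (or second) split decomposition. First I would focus on the assertion on the left, namely $(\mathcal{M}^{-1}-q^{2i-d}I)U_i\subseteq U_{i-1}$; the right-hand assertion should then follow by applying the same argument to $\Phi^{\Downarrow}$ and using $\mathcal{M}^{\Downarrow}=\mathcal{M}$ and $K^{\Downarrow}=B$.

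The key computational input is the relation $\frac{q\mathcal{M}^{-1}K-q^{-1}K\mathcal{M}^{-1}}{q-q^{-1}}=I$ from Lemma \ref{lemma:qM^K}, which I would rewrite as $q\mathcal{M}^{-1}K-q^{-1}K\mathcal{M}^{-1}=(q-q^{-1})I$. Now take a vector $w\in U_i$. Since $U_i$ is the eigenspace of $K$ with eigenvalue $q^{d-2i}$ (Definition \ref{def:KB}), applying $\mathcal{M}^{-1}$ to $w$ and then studying how $K$ acts on $\mathcal{M}^{-1}w$ should reveal that $\mathcal{M}^{-1}w$ is a sum of a $K$-eigenvector of eigenvalue $q^{d-2i+2}$ (the scalar $q^{2i-d}$ inverse, i.e. the component in $U_{i-1}$) plus lower terms. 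Concretely, apply both sides of the rewritten relation to $w$: from $Kw=q^{d-2i}w$ I get $q\,q^{d-2i}\mathcal{M}^{-1}w-q^{-1}K\mathcal{M}^{-1}w=(q-q^{-1})w$, which rearranges to $K\mathcal{M}^{-1}w=q^{d-2i+2}\mathcal{M}^{-1}w-q(q-q^{-1})w$. This exhibits $\mathcal{M}^{-1}w$ as a near-eigenvector of $K$: setting $u=(\mathcal{M}^{-1}-q^{2i-d}I)w$ one checks $Ku=q^{d-2i+2}u$ plus a correction lying in $U_i$-lower terms, and the eigenvalue $q^{d-2i+2}=q^{d-2(i-1)}$ is precisely that of $U_{i-1}$.

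An alternative and perhaps cleaner route, which I would likely prefer, is to argue directly from Lemma \ref{lemma:Minv-exp}, where $\mathcal{M}^{-1}=(I-a^{-1}q^{-1}\psi)K^{-1}$. For $w\in U_i$ we have $K^{-1}w=q^{2i-d}w$, so $\mathcal{M}^{-1}w=q^{2i-d}w-a^{-1}q^{-1}\psi K^{-1}w=q^{2i-d}w-a^{-1}q^{2i-d-1}\psi w$. Hence $(\mathcal{M}^{-1}-q^{2i-d}I)w=-a^{-1}q^{2i-d-1}\psi w$, and by Lemma \ref{lemma:psiU} we have $\psi w\in\psi U_i\subseteq U_{i-1}$. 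This gives the left-hand containment immediately and transparently. For the right-hand containment, I would use the expression $\mathcal{M}^{-1}=(I-aq^{-1}\psi)B^{-1}$ from Lemma \ref{lemma:Minv-exp}, apply it to $w\in U_i^{\Downarrow}$ (an eigenspace of $B$ with eigenvalue $q^{d-2i}$), and invoke $\psi U_i^{\Downarrow}\subseteq U_{i-1}^{\Downarrow}$ from Lemma \ref{lemma:psiU}.

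I do not anticipate a genuine obstacle here, since all the ingredients are in place; the only point requiring care is selecting the correct one of the four expressions for $\mathcal{M}^{-1}$ in Lemma \ref{lemma:Minv-exp} so that the relevant operator ($K^{-1}$ for $U_i$, $B^{-1}$ for $U_i^{\Downarrow}$) acts diagonally on the given eigenspace, and confirming that the resulting $\psi$-term lands in the next-lower subspace via Lemma \ref{lemma:psiU}. The bookkeeping of the scalar $q^{2i-d}$ matching the $B^{-1}$/$K^{-1}$ eigenvalue is the one place a sign or exponent slip could occur, so I would double-check that arithmetic explicitly.
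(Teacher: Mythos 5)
Your preferred route is exactly the paper's proof: it applies $\mathcal{M}^{-1}=(I-a^{-1}q^{-1}\psi)K^{-1}$ from Lemma \ref{lemma:Minv-exp} to $U_i$, reduces the left containment to $\psi U_i\subseteq U_{i-1}$ via Lemma \ref{lemma:psiU}, and obtains the right containment by passing to $\Phi^{\Downarrow}$ with $\mathcal{M}^{\Downarrow}=\mathcal{M}$. Your computation (including the sign on $-a^{-1}q^{2i-d-1}\psi$) is correct, so the proposal is sound and essentially identical to the paper's argument.
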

\begin{proof}
We first show the equation on the left in (\ref{eq:MinvU}). By Lemma \ref{lemma:Minv-exp}, 
\begin{equation}
\mathcal{M}^{-1}=(I-a^{-1}q^{-1}\psi)K^{-1}.
\end{equation}
From this and Definition \ref{def:KB}, it follows that on $U_i$,
\begin{equation}
\mathcal{M}^{-1}-q^{2i-d}I=a^{-1}q^{2i-d-1}\psi.
\end{equation}
The result follows from this along with Lemma \ref{lemma:psiU}.

The proof of the equation on the right in (\ref{eq:MinvU}) follows from the equation on the left in (\ref{eq:MinvU}) applied to $\Phi^\Downarrow$, along with the fact that $\mathcal{M}^\Downarrow=\mathcal{M}$.
\end{proof}

\begin{lemma}
For $0\leq i\leq d$,
\begin{equation}
\mathcal{M}^{-1}E_iV\subseteq E_{i-1}V+E_iV+E_{i+1}V.
\end{equation}
\end{lemma}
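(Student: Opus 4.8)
The plan is to avoid analyzing the single eigenspace $E_iV$ directly under $\mathcal{M}^{-1}$ (which is awkward, since $\mathcal{M}^{-1}$ does not preserve the split decomposition components but only shifts them within a flag), and instead to realize $E_iV$ as the intersection of two flags on which $\mathcal{M}^{-1}$ acts in a controlled, one-step fashion. The two inputs I would use are Lemma \ref{lemma:MinvU}, which gives $\mathcal{M}^{-1}U_j\subseteq U_{j-1}+U_j$ and $\mathcal{M}^{-1}U_j^\Downarrow\subseteq U_{j-1}^\Downarrow+U_j^\Downarrow$, together with the identities \eqref{eq:EU} and \eqref{eq:EUdd}, which convert the split flags into flags of $A$-eigenspaces.

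First I would establish two flag containments. Using \eqref{eq:EU} to write $E_iV+\cdots+E_dV=U_i+\cdots+U_d$ and applying Lemma \ref{lemma:MinvU} termwise, I get $\mathcal{M}^{-1}(U_i+\cdots+U_d)\subseteq U_{i-1}+U_i+\cdots+U_d$, which by \eqref{eq:EU} again equals $E_{i-1}V+E_iV+\cdots+E_dV$. Symmetrically, using \eqref{eq:EUdd} to write $E_0V+\cdots+E_iV=U_{d-i}^\Downarrow+\cdots+U_d^\Downarrow$ and applying the second containment of Lemma \ref{lemma:MinvU}, I obtain $\mathcal{M}^{-1}(E_0V+\cdots+E_iV)\subseteq E_0V+\cdots+E_{i+1}V$. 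In other words $\mathcal{M}^{-1}$ enlarges each of these two flags by exactly one step. The boundary cases $i=0$ and $i=d$ are handled by the conventions $U_{-1}=0$, $U_{-1}^\Downarrow=0$, $E_{-1}V=0$, $E_{d+1}V=0$.

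Finally I would combine the two containments. Since $V=\bigoplus_j E_jV$, we have $E_iV=(E_iV+\cdots+E_dV)\cap(E_0V+\cdots+E_iV)$. Applying $\mathcal{M}^{-1}$ and using that any linear map sends $X\cap Y$ into $(\mathcal{M}^{-1}X)\cap(\mathcal{M}^{-1}Y)$, the two flag containments yield $\mathcal{M}^{-1}E_iV\subseteq(E_{i-1}V+\cdots+E_dV)\cap(E_0V+\cdots+E_{i+1}V)=E_{i-1}V+E_iV+E_{i+1}V$, which is the assertion. There is no serious obstacle here: the content of the argument is entirely in setting up the intersection-of-flags picture, and the only points needing care are the boundary conventions and the observation that one needs only the inclusion $\mathcal{M}^{-1}(X\cap Y)\subseteq\mathcal{M}^{-1}X\cap\mathcal{M}^{-1}Y$, not equality. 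A purely algebraic alternative would be to derive, from Lemma \ref{lemma:qAM^} and Lemma \ref{lemma:Mpsi}, a relation quadratic in $A$ and linear in $\mathcal{M}^{-1}$ and then read off the tridiagonal action on the $A$-eigenspaces; this is more computational, and I would not pursue it.
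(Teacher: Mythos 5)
Your proposal is correct and follows essentially the same route as the paper: both arguments use Lemma \ref{lemma:MinvU} together with \eqref{eq:EU} and \eqref{eq:EUdd} to show that $\mathcal{M}^{-1}E_iV$ lies in both $E_{i-1}V+\cdots+E_dV$ and $E_0V+\cdots+E_{i+1}V$, and then intersect. The only cosmetic difference is that you apply $\mathcal{M}^{-1}$ to the whole flag sums while the paper applies it directly to $E_iV$ viewed inside each flag.
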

\begin{proof}
We first show that $\mathcal{M}^{-1}E_iV\subseteq \sum_{h=0}^{i+1}E_hV$.   
 Recall from (\ref{eq:EUdd}) that $E_iV\subseteq \sum_{h=d-i}^d U_h^\Downarrow$. 
 By this, Lemma \ref{lemma:MinvU}, and (\ref{eq:EUdd}), we obtain  $\mathcal{M}^{-1}E_iV\subseteq \sum_{h=0}^{i+1} E_hV$. 
 
We now show that $\mathcal{M}^{-1}E_iV\subseteq \sum_{h=i-1}^d E_hV$.  
 Recall from (\ref{eq:EU}) that $E_iV\subseteq \sum_{h=i}^d U_h$. 
By this, Lemma \ref{lemma:MinvU}, and (\ref{eq:EU}), we obtain 
 $\mathcal{M}^{-1}E_iV\subseteq \sum_{h=i-1}^d E_hV$. 
 
Thus $\mathcal{M}^{-1}E_iV$ is contained in the intersection of $\sum_{h=0}^{i+1} E_hV$ and $\sum_{h=i-1}^d E_hV$, which is $E_{i-1}V+E_iV+E_{i+1}V$.
\end{proof}

\begin{lemma}
For $0\leq i\leq d$, 
\begin{align*}
(\mathcal{M} -q^{d-2i}I)E_i^*V&\subseteq E_0^*V+E_1^*V+\cdots+E_{i-1}^*V,\\
(\mathcal{M}^{-1} -q^{2i-d}I)E_i^*V&\subseteq E_0^*V+E_1^*V+\cdots+E_{i-1}^*V.
\end{align*}
\end{lemma}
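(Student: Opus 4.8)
The plan is to prove the two containments by the same ``intersection'' strategy used in the preceding lemma for $\mathcal{M}^{-1}E_iV$, but now transported to the eigenspace decomposition $\{E_i^*V\}_{i=0}^d$ of $A^*$. The central fact I would exploit is the chain of equalities in \eqref{eq:UUdd}, namely $\sum_{h=0}^i E_h^*V=\sum_{h=0}^i U_h=\sum_{h=0}^i U_h^\Downarrow$, which lets me translate statements about $\{U_h\}$ and $\{U_h^\Downarrow\}$ into statements about $\{E_h^*V\}$. Since each $E_i^*V$ sits inside this common subspace, and $\mathcal{M}$ acts on $\{U_i\}$ and $\{U_i^\Downarrow\}$ in a controlled way, I expect both containments to fall out quickly.

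First I would establish the statement for $\mathcal{M}$. Fix $i$ and note $E_i^*V\subseteq \sum_{h=0}^i E_h^*V=\sum_{h=0}^i U_h$ by \eqref{eq:UUdd}. Applying $(\mathcal{M}-q^{d-2i}I)$ and using \eqref{eq:MU1} from Lemma \ref{lemma:MU}, each $(\mathcal{M}-q^{d-2i}I)U_h$ for $h\leq i$ lands in $U_0+\cdots+U_{i-1}=\sum_{h=0}^{i-1}E_h^*V$. The only subtlety is that \eqref{eq:MU1} gives $(\mathcal{M}-q^{d-2i}I)U_h\subseteq U_0+\cdots+U_{h-1}$ with the eigenvalue shift keyed to $h$, not to $i$; but I can write $\mathcal{M}-q^{d-2i}I=(\mathcal{M}-q^{d-2h}I)+(q^{d-2h}-q^{d-2i})I$, so on $U_h$ the first summand maps into $\sum_{h'<h}U_{h'}\subseteq\sum_{h'<i}U_{h'}$ and the scalar term keeps $U_h\subseteq\sum_{h'\leq i-1}U_{h'}$ when $h<i$, while for $h=i$ the scalar term vanishes. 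Combining over $h\leq i$ gives $(\mathcal{M}-q^{d-2i}I)E_i^*V\subseteq \sum_{h=0}^{i-1}U_h=\sum_{h=0}^{i-1}E_h^*V$ by \eqref{eq:UUdd} again.

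For the $\mathcal{M}^{-1}$ statement I would run the identical argument with the second split decomposition, or more economically invoke the structure already in place. One clean route is to observe that $\mathcal{M}^{-1}$ preserves each of the nested flags $\sum_{h=0}^i U_h$: from Lemma \ref{lemma:MinvU}, $(\mathcal{M}^{-1}-q^{2i-d}I)U_i\subseteq U_{i-1}$, so $\mathcal{M}^{-1}U_i\subseteq U_{i-1}+U_i\subseteq \sum_{h=0}^i U_h$, whence $\mathcal{M}^{-1}$ stabilizes $\sum_{h=0}^i U_h=\sum_{h=0}^i E_h^*V$. Then, exactly as above, starting from $E_i^*V\subseteq\sum_{h=0}^i U_h$ and using the eigenvalue-shift decomposition together with Lemma \ref{lemma:MinvU}, I get $(\mathcal{M}^{-1}-q^{2i-d}I)E_i^*V\subseteq\sum_{h=0}^{i-1}U_h=\sum_{h=0}^{i-1}E_h^*V$.

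The main obstacle, such as it is, lies entirely in bookkeeping the eigenvalue shifts: Lemmas \ref{lemma:MU} and \ref{lemma:MinvU} are stated with the shift $q^{d-2h}$ (resp. $q^{2h-d}$) matched to the index $h$ of the summand $U_h$, whereas the conclusion uses the single shift keyed to $i$. The algebraic identity $\mathcal{M}-q^{d-2i}I=(\mathcal{M}-q^{d-2h}I)+(q^{d-2h}-q^{d-2i})I$ (and its $\mathcal{M}^{-1}$ analogue) resolves this cleanly, since the extra scalar multiple of $U_h$ is harmless for $h<i$ and the relevant term vanishes at $h=i$. Once this is handled, both inclusions are immediate consequences of \eqref{eq:UUdd} and the two preceding lemmas, so I would write the proof simply as: use \eqref{eq:UUdd} to place $E_i^*V$ inside $\sum_{h=0}^i U_h$ (resp. $\sum_{h=0}^i U_h^\Downarrow$), apply Lemma \ref{lemma:MU} (resp. Lemma \ref{lemma:MinvU}), and translate back via \eqref{eq:UUdd}.
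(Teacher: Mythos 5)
Your proof is correct, but it takes a genuinely different route from the paper's. The paper proves this lemma in one line by passing to the eigenspace decomposition $\{W_i\}_{i=0}^d$ of $\mathcal{M}$: by Lemma \ref{lemma:Wsum}, $E_i^*V\subseteq\sum_{h=0}^i E_h^*V=\sum_{h=0}^i W_h$, and since $\mathcal{M}$ acts on each $W_h$ as the scalar $q^{d-2h}$ (Definition \ref{def:W}), the operator $\mathcal{M}-q^{d-2i}I$ annihilates the $W_i$-component and preserves each $W_h$ with $h<i$, so the image lands in $\sum_{h=0}^{i-1}W_h=\sum_{h=0}^{i-1}E_h^*V$; the $\mathcal{M}^{-1}$ statement is identical with eigenvalue $q^{2i-d}$. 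No eigenvalue-shift bookkeeping is needed because $\mathcal{M}$ is literally diagonal on $\{W_h\}$. You instead work with the split decomposition via \eqref{eq:UUdd} and Lemmas \ref{lemma:MU}, \ref{lemma:MinvU}, where $\mathcal{M}^{\pm 1}$ is only upper triangular, and you correctly resolve the resulting index mismatch with the splitting $\mathcal{M}-q^{d-2i}I=(\mathcal{M}-q^{d-2h}I)+(q^{d-2h}-q^{d-2i})I$. Your argument is sound and has the mild advantage of not invoking the $W$-machinery of Section \ref{section:W} at all; the paper's argument buys brevity by exploiting the decomposition that was built precisely to diagonalize $\mathcal{M}$.
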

\begin{proof}
Note that $E_i^*V\subseteq E_0^*V+E_1^*V+\cdots+E_i^*V= W_0+W_1+\cdots+W_i$ by Lemma \ref{lemma:Wsum}.
The result follows from this fact along with Definition \ref{def:W}.
\end{proof}

%%%%%%%%%%%%%%%
%%%%%%%%%%%%%%%
%%%%%%%%%%%%%%%
\section{When $\Phi$ is a Leonard system}\label{section:LS}
%%%%%%%%%%%%%%%
%%%%%%%%%%%%%%%
%%%%%%%%%%%%%%%

We continue to discuss the situation of Assumption \ref{assump:main}.   
For the rest of the paper we assume $\rho_i=1$ for $0\leq i\leq d$.
In this case $\Phi$ is called a Leonard system.\\  

We use the following notational convention.  Let $\{v_i\}_{i=0}^d$ denote a basis for $V$.  The sequence of subspaces $\{\mathbb{K} v_i\}_{i=0}^d$ is a decomposition of $V$ said, to be \emph{induced} by the basis $\{v_i\}_{i=0}^d$.\\

We display a basis $\{u_i\}_{i=0}^d$ (resp. $\{u_i^\Downarrow\}_{i=0}^d$) (resp. $\{w_i\}_{i=0}^d$) that induces the decomposition $\{U_i\}_{i=0}^d$ (resp. $\{U_i^\Downarrow\}_{i=0}^d$ ) (resp. $\{W_i\}_{i=0}^d$ ).  We find the actions of $\psi,K,B,\Delta^{\pm 1}, A$ on these bases.  We also display the transition matrices between these bases. \\

For the rest of this section fix $0\neq u_0\in U_0$.  
Let $M$ denote the subalgebra of ${\rm End}(V)$ generated by $A$.  By \cite[Lemma 5.1]{LP24}, the map $M\to V$, $X\mapsto Xu_0$ is an isomorphism of vector spaces. 
Consequently, the vectors $\{A^iu_0\}_{i=0}^d$ form a basis for $V$.\\

We now define a basis $\{u_i\}_{i=0}^d$ of $V$ that induces $\{U_i\}_{i=0}^d$.  
 For $0\leq i\leq d$, define 
 \begin{equation}
 u_i=\left(\prod_{j=0}^{i-1}\left(A-\theta_jI\right)\right) u_0.\label{eq:ubasisdef}
 \end{equation}
Observe that $u_i\neq 0$. 
  By \cite[Theorem 4.6]{Somealg}, $u_i\in U_i$. 
   So $u_i$ is a basis for $U_i$.  Consequently, $\{u_i\}_{i=0}^d$ is a basis for $V$ that induces $\{U_i\}_{i=0}^d$.\\
   
Next we define a basis $\{u_i^\Downarrow\}_{i=0}^d$ of $V$ that induces $\{U_i^\Downarrow\}_{i=0}^d$.  For $0\leq i\leq d$, define
 \begin{equation}
 u_i^\Downarrow=\left(\prod_{j=0}^{i-1}\left(A-\theta_{d-j}I\right)\right) u_0.\label{eq:uddbasisdef}
 \end{equation}
Observe that  
 $u_i^\Downarrow\neq 0$.
By Lemma \ref{lemma:DD'}, $u_i^\Downarrow\in U_i^\Downarrow$.  So $u_i^\Downarrow$ is a basis for $U_i^\Downarrow$.   Consequently, $\{u_i^\Downarrow\}_{i=0}^d$ is a basis for $V$ that induces $\{U_i^\Downarrow\}_{i=0}^d$.\\

\begin{lemma}\label{lemma:uddDeltau}
 For $0\leq i\leq d$, 
	   \begin{equation}
	   u_i^\Downarrow=\Delta u_i.
	\end{equation}
\end{lemma}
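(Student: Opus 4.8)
The plan is to prove $u_i^\Downarrow = \Delta u_i$ for $0 \leq i \leq d$ by showing that $\Delta u_i$ satisfies the same two defining properties that characterize $u_i^\Downarrow$ as a basis vector. First I would observe that the characterization of $\Delta$ in Lemma \ref{lemma:DeltaEV}, specifically equation \eqref{Delta2}, gives precise control over how $\Delta$ maps the flag $\{E_iV + E_{i+1}V + \cdots + E_dV\}$. The key structural fact I want to exploit is that $u_i = \left(\prod_{j=0}^{i-1}(A-\theta_jI)\right)u_0$ lives in $U_i$, and more can be said: since the product annihilates the components in $E_0V, \ldots, E_{i-1}V$, the vector $u_i$ actually lies in $E_iV + E_{i+1}V + \cdots + E_dV$, and its $E_iV$-component is nonzero.

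The central computation would be to apply $\Delta$ to \eqref{eq:ubasisdef} and commute $\Delta$ past the polynomial in $A$. Here I would use Lemma \ref{lemma:DeltaEV}: since $\Delta$ sends $E_iV + \cdots + E_dV$ onto $E_0V + \cdots + E_{d-i}V$, I expect $\Delta$ to intertwine the eigenvalue-ordering of $A$ in a reversing fashion, effectively sending $\theta_j \mapsto \theta_{d-j}$ in the relevant sense. Concretely, I would aim to show that
\begin{equation*}
\Delta \left(\prod_{j=0}^{i-1}(A-\theta_jI)\right) u_0 = \left(\prod_{j=0}^{i-1}(A-\theta_{d-j}I)\right) \Delta u_0,
\end{equation*}
which, combined with $\Delta u_0 = u_0$ (since $u_0 \in U_0$ and $\Delta U_0 \subseteq U_0^\Downarrow = U_0$ with $(\Delta - I)U_0 = 0$ by \eqref{eq:DeltaU22}), would immediately yield $\Delta u_i = u_i^\Downarrow$ by definition \eqref{eq:uddbasisdef}.

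An alternative and perhaps cleaner route avoids establishing the full intertwining identity. Instead I would verify directly that $\Delta u_i$ satisfies the two properties pinning down $u_i^\Downarrow$: namely that $\Delta u_i \in U_i^\Downarrow$, and that $\Delta u_i$ has the correct normalization relative to $u_0$. The membership $\Delta u_i \in U_i^\Downarrow$ is immediate from \eqref{eq:DeltaU21} since $u_i \in U_i$. For the normalization, I would use \eqref{eq:DeltaU22}: because $(\Delta - I)u_i \in U_0 + \cdots + U_{i-1}$, the vector $\Delta u_i$ and $u_i$ agree modulo lower terms, so both $\Delta u_i$ and $u_i^\Downarrow$ are the unique element of $U_i^\Downarrow$ whose projection onto the top component $E_iV$ (along the decomposition induced by the $A$-eigenspaces) matches that of $u_i$. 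Since $\Phi$ is a Leonard system, each $U_i^\Downarrow$ is one-dimensional, so matching a single coordinate suffices to force equality.

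The main obstacle I anticipate is making the reversal $\theta_j \mapsto \theta_{d-j}$ rigorous in the intertwining approach. The identity \eqref{Delta2} controls images of subspaces but not the precise action of $\Delta$ on individual eigenvectors, so commuting $\Delta$ past a factor $(A - \theta_j I)$ requires care: one must track how $\Delta A \Delta^{-1}$ acts, and verify it carries the flag filtration correctly at each stage. For this reason I expect the second route---verifying the two characterizing properties directly and invoking one-dimensionality---to be the safer and more transparent argument, with the first route serving as conceptual motivation for why the eigenvalues reverse.
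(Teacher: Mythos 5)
Your second route follows the same overall strategy as the paper's proof: since $U_i^\Downarrow$ is one-dimensional and $\Delta U_i = U_i^\Downarrow$, one has $\Delta u_i = \lambda u_i^\Downarrow$ for some $0\neq\lambda\in\mathbb{K}$, and it remains to force $\lambda=1$ by matching a single coordinate. But the coordinate you chose is wrong, and this is a genuine gap. You propose to compare the projections of $\Delta u_i$, $u_i^\Downarrow$, and $u_i$ onto $E_iV$ along the eigenspace decomposition of $A$. These projections do not agree in general: from \eqref{eq:ubasisdef} one gets $E_i u_i = \prod_{j=0}^{i-1}(\theta_i-\theta_j)\,E_i u_0$, whereas from \eqref{eq:uddbasisdef} one gets $E_i u_i^\Downarrow = \prod_{j=0}^{i-1}(\theta_i-\theta_{d-j})\,E_i u_0$, and the two scalar factors differ; worse, for $i>d-i$ one has $E_i u_i^\Downarrow=0$ outright since $u_i^\Downarrow\in E_0V+\cdots+E_{d-i}V$. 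Moreover \eqref{eq:DeltaU22} says $(\Delta-I)u_i\in U_0+\cdots+U_{i-1}=E_0^*V+\cdots+E_{i-1}^*V$, which is a statement about the $A^*$-eigenspace filtration and gives no control over the $E_iV$-component of $\Delta u_i$, so even the claim $E_i\Delta u_i=E_i u_i$ is unjustified.

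The correct coordinate is the one attached to the flag $U_0+\cdots+U_{i-1}$, namely the coefficient of $A^iu_0$ with respect to the basis $\{A^ju_0\}_{j=0}^d$ (equivalently, the $E_i^*V$-component rather than the $E_iV$-component). This is what the paper does: by \cite[Lemma 7.3]{bockting1}, $U_0+\cdots+U_{i-1}$ is spanned by $\{A^ju_0\}_{j=0}^{i-1}$, so \eqref{eq:DeltaU22} shows that $\Delta u_i - A^iu_0$ lies in that span, while $u_i^\Downarrow-A^iu_0$ lies in the same span because \eqref{eq:uddbasisdef} is a monic polynomial of degree $i$ in $A$ applied to $u_0$. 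Comparing the coefficient of $A^iu_0$ on both sides of $\Delta u_i=\lambda u_i^\Downarrow$ then gives $\lambda=1$. Note also that you assert, but never verify, that the distinguished coordinate of $u_i^\Downarrow$ agrees with that of $u_i$; that verification is precisely the monic-polynomial comparison just described and cannot be skipped. Your first route, commuting $\Delta$ past $\prod_j(A-\theta_jI)$ via a putative intertwining $\theta_j\mapsto\theta_{d-j}$, is not supported by \eqref{Delta2} alone, and you are right to discard it.
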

\begin{proof}
By Lemma \ref{lemma:DD'}, $\Delta U_i=U_i^\Downarrow$.  So there exists $0\neq \lambda\in\mathbb{K}$ such that $\Delta u_i=\lambda u_i^\Downarrow$.  We show that $\lambda=1$.  By \cite[Lemma 7.3]{bockting1} and \eqref{eq:DeltaU22}, $\Delta u_i-A^iu$ is a linear combination of $\{A^ju\}_{j=0}^{i-1}$. Also, $u_i^\Downarrow-A^iu$ is a linear combination of $\{A^ju\}_{j=0}^{i-1}$.  The vectors $\{A^ju\}_{j=0}^{i-1}$ are linearly independent.  By these comments $\lambda=1$.
\end{proof}\\

We next define a basis $\{w_i\}_{i=0}^d$ of $V$ that induces $\{W_i\}_{i=0}^d$.  For $0\leq i\leq d$, define
 \begin{equation}
 w_i=\exp_{q^{-1}}\left(-\frac{a^{-1}}{q-q^{-1}}\ \psi\right) u_i.\label{eq:wdef}
\end{equation}
Since  $\{u_i \}_{i=0}^d$ is a basis of $V$ and $\exp_{q^{-1}}(-\frac{a^{-1}}{q-q^{-1}}\ \psi)$ is invertible, 
 $w_i$ is a basis for $W_i$.  
Consequently, $\{w_i\}_{i=0}^d$ is a basis for $V$ that induces $\{W_i\}_{i=0}^d$.

\begin{lemma}\label{lemma:uw}
For $0\leq i\leq d$, 
\begin{align}
&u_i=\exp_q\left(\frac{a^{-1}}{q-q^{-1}}\psi\right) w_i,&&u_i^\Downarrow=\exp_q\left(\frac{a}{q-q^{-1}}\psi\right) w_i,  \label{eq:uexpw}\\
&w_i=\exp_{q^{-1}}\left(-\frac{a^{-1}}{q-q^{-1}}\psi\right) u_i, 
&&w_i=\exp_{q^{-1}}\left(-\frac{a}{q-q^{-1}}\psi\right) u_i^\Downarrow.\label{eq:wexpu}
\end{align}
\end{lemma}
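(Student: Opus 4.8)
The plan is to establish Lemma \ref{lemma:uw} by connecting the basis-level statements to the subspace-level identities already proven in Proposition \ref{prop:UW}. The key observation is that the first equation in \eqref{eq:wexpu} is nothing more than the defining equation \eqref{eq:wdef} for $w_i$, so that one requires no proof. The strategy is therefore to derive the remaining three equations from this definition together with the results of Section \ref{section:W}, primarily Proposition \ref{prop:UW} and Lemma \ref{lemma:uddDeltau}.

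First I would obtain the first equation in \eqref{eq:uexpw}. Starting from the definition \eqref{eq:wdef}, I apply $\exp_q(\frac{a^{-1}}{q-q^{-1}}\psi)$ to both sides. Since $\exp_q(\frac{a^{-1}}{q-q^{-1}}\psi)$ and $\exp_{q^{-1}}(-\frac{a^{-1}}{q-q^{-1}}\psi)$ are mutually inverse by \eqref{eq:qexpinv}, the right-hand side collapses to $u_i$, yielding $u_i=\exp_q(\frac{a^{-1}}{q-q^{-1}}\psi) w_i$. This is purely formal and uses only the inverse relationship of the two $q$-exponentials.

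Next I would handle the two equations involving $u_i^\Downarrow$. By Lemma \ref{lemma:uddDeltau}, $u_i^\Downarrow=\Delta u_i$. I substitute the factorization of $\Delta$ from Theorem \ref{thm:Deltaqexp}, namely $\Delta=\exp_{q}(\frac{a}{q-q^{-1}}\psi)\exp_{q^{-1}}(-\frac{a^{-1}}{q-q^{-1}}\psi)$, and then recognize that the rightmost factor $\exp_{q^{-1}}(-\frac{a^{-1}}{q-q^{-1}}\psi)$ applied to $u_i$ is exactly $w_i$ by \eqref{eq:wdef}. This immediately gives the first equation in \eqref{eq:uexpw} for the $u_i^\Downarrow$ case, i.e. $u_i^\Downarrow=\exp_q(\frac{a}{q-q^{-1}}\psi) w_i$. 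For the second equation in \eqref{eq:wexpu}, I apply the inverse $\exp_{q^{-1}}(-\frac{a}{q-q^{-1}}\psi)$ to both sides and again use \eqref{eq:qexpinv} to cancel, obtaining $w_i=\exp_{q^{-1}}(-\frac{a}{q-q^{-1}}\psi) u_i^\Downarrow$.

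I do not anticipate a substantial obstacle here, since all four identities are straightforward rearrangements of Lemma \ref{lemma:uddDeltau}, the $\Delta$-factorization, and the definition of $w_i$; the only point requiring mild care is bookkeeping of which $q$-exponential inverts which, and confirming that the factor stripped off in each step genuinely matches the defining expression for $w_i$. It is worth noting that these basis-level statements are the natural refinements of the subspace-level identities in Proposition \ref{prop:UW}, and indeed one could alternatively argue that each equation holds up to a nonzero scalar by Proposition \ref{prop:UW} and then pin the scalar to $1$ using the specific normalizations in \eqref{eq:ubasisdef}, \eqref{eq:uddbasisdef}, and \eqref{eq:wdef}; however, the direct derivation via Lemma \ref{lemma:uddDeltau} is cleaner and avoids any scalar-tracking.
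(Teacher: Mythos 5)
Your proposal is correct and follows exactly the paper's own route: the left-hand equations come from the definition \eqref{eq:wdef} of $w_i$ together with the mutual inverse relation \eqref{eq:qexpinv}, and the right-hand equations come from combining Lemma \ref{lemma:uddDeltau}, the factorization of $\Delta$ in Theorem \ref{thm:Deltaqexp}, and \eqref{eq:wdef}. Your filled-in details (cancelling the inverse exponentials, stripping the rightmost factor of $\Delta$) are the intended ones.
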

\begin{proof}
Use \eqref{eq:wdef} to obtain the equations on the left in \eqref{eq:uexpw},\eqref{eq:wexpu}.
 To obtain the equations on the right in \eqref{eq:uexpw},\eqref{eq:wexpu}, use  Theorem \ref{thm:Deltaqexp}, Lemma \ref{lemma:uddDeltau}, and \eqref{eq:wdef}. 
\end{proof}\\

We now describe the actions of $\psi,K,B,\mathcal{M}, \Delta,A$ on the bases
$\{u_i\}_{i=0}^d$, $\{u_i^\Downarrow\}_{i=0}^d$, $\{w_i\}_{i=0}^d$.  
First we recall a notion from linear algebra.  
Let ${\rm Mat}_{d+1}(\mathbb{K})$ denote the $\mathbb{K}$-algebra of $(d+1)\times(d+1)$ matrices that have all entries in $\mathbb{K}$. We index the rows and columns by $0,1,\mathellipsis,d$.  
Let $\{v_i\}_{i=0}^d$ denote a basis of $V$.  
For $T\in{\rm End}(V)$ and $X\in {\rm Mat}_{d+1}(\mathbb{K})$, we say that \emph{$X$ represents $T$} with respect to $\{v_i\}_{i=0}^d$ whenever $Tv_j=\sum_{i=0}^d X_{ij}v_i$ for $0\leq j\leq d$.  \\

By \eqref{eq:ubasisdef} and \eqref{eq:uddbasisdef},
the matrices that represent $A$ with respect to $\{u_i\}_{i=0}^d$ and $\{u_i^\Downarrow\}_{i=0}^d$ are, respectively, 
\begin{equation}
\displaystyle \left(\begin{matrix}
\theta_0& &  &{\bf 0}\\
1 & \theta_1 & &\\
&\ddots&\ddots&\\
{\bf 0}&&1&\theta_d
\end{matrix}\right), 
\qquad\qquad\qquad
\displaystyle \left(\begin{matrix}
\theta_d& &  &{\bf 0}\\
1 & \theta_{d-1} & &\\
&\ddots&\ddots&\\
{\bf 0}&&1&\theta_0
\end{matrix}\right). \label{eq:Amatrices}
\end{equation}

By construction, the matrix ${\rm diag}(q^d,q^{d-2},\mathellipsis,q^{-d})$ represents $K$ with respect to $\{u_i\}_{i=0}^d$, and $B$ with respect to $\{u_i^\Downarrow\}_{i=0}^d$, and $\mathcal{M}$ with respect to $\{w_i\}_{i=0}^d$.

\begin{definition}\label{def:psihat}
We define a matrix
$\widehat{\psi}\in {\rm Mat}_{d+1}(\mathbb{K})$.  For $1\leq i\leq d$, the $(i-1,i)$-entry is $(q^i-q^{-i})(q^{d-i+1}-q^{i-d-1})$.  All other entries are 0. 
\end{definition}

\begin{prop}\label{prop:matpsi}
The matrix $\widehat\psi$ represents $\psi$ with respect to each of the bases $\{u_i\}_{i=0}^d$, $\{u_i^\Downarrow\}_{i=0}^d$, $\{w_i\}_{i=0}^d$.
\end{prop}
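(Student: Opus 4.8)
The plan is to show that $\widehat\psi$ represents $\psi$ with respect to each of the three bases by computing the action of $\psi$ on each basis vector directly. The key structural fact is Lemma \ref{lemma:psiW}, which gives $\psi W_i\subseteq W_{i-1}$, together with the analogous containments $\psi U_i\subseteq U_{i-1}$ and $\psi U_i^\Downarrow\subseteq U_{i-1}^\Downarrow$ from Lemma \ref{lemma:psiU}. Since each of $\{u_i\}_{i=0}^d$, $\{u_i^\Downarrow\}_{i=0}^d$, $\{w_i\}_{i=0}^d$ induces the corresponding decomposition, these containments immediately force the matrix representing $\psi$ to be strictly upper triangular with nonzero entries only on the superdiagonal. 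Thus for each basis there exist scalars $c_i$ with $\psi u_i=c_i u_{i-1}$ (and likewise for the other two bases), and the entire problem reduces to computing these superdiagonal scalars and checking they equal $(q^i-q^{-i})(q^{d-i+1}-q^{i-d-1})$.

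First I would handle the basis $\{u_i\}_{i=0}^d$. Here I would use the explicit definition $u_i=\bigl(\prod_{j=0}^{i-1}(A-\theta_j I)\bigr)u_0$ from \eqref{eq:ubasisdef} together with the relation in Lemma \ref{lemma:Apsi}, namely
\begin{equation*}
\frac{\psi A-A\psi}{q-q^{-1}}=(I-aq\psi)K-(I-a^{-1}q^{-1}\psi)K^{-1}.
\end{equation*}
The idea is to push $\psi$ through the product defining $u_i$ one factor at a time, using this commutation relation to convert $\psi(A-\theta_j I)$ into $(A-\theta_{?}I)\psi$ plus lower-order correction terms in $K^{\pm 1}$. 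Because $u_i\in U_i$ is a $K$-eigenvector with eigenvalue $q^{d-2i}$, the $K^{\pm 1}$ terms evaluate to scalars, and the $\psi$ that reaches $u_0$ annihilates it. Carefully tracking the telescoping should produce the claimed superdiagonal entry. Alternatively, and more cleanly, I would extract the scalar $c_i$ by combining the already-established operator identities: from Lemma \ref{lemma:K-M} we have $K=(I-a^{-1}q\psi)\mathcal{M}$, and comparing the action of $K$ on $u_i$ (which is $q^{d-2i}u_i$) against the action expressed through $\mathcal{M}$ and $\psi$ gives a recursion for $c_i$.

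For the remaining two bases, I expect the computation to be essentially parallel and in fact deducible from the first by symmetry and the factorization results already proved. Since $w_i=\exp_{q^{-1}}(-\tfrac{a^{-1}}{q-q^{-1}}\psi)u_i$ by \eqref{eq:wdef} and $\psi$ commutes with any power series in $\psi$, the action of $\psi$ on $\{w_i\}$ is governed by the same scalars as its action on $\{u_i\}$; more precisely, applying $\exp_{q^{-1}}(-\tfrac{a^{-1}}{q-q^{-1}}\psi)$ to $\psi u_i=c_i u_{i-1}$ and using that $\psi$ commutes with the $q$-exponential yields $\psi w_i=c_i w_{i-1}$ with the identical constant. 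The second split basis $\{u_i^\Downarrow\}_{i=0}^d$ can be treated either the same way via $u_i^\Downarrow=\exp_q(\tfrac{a}{q-q^{-1}}\psi)w_i$ from Lemma \ref{lemma:uw}, or by appealing to the $\Downarrow$-symmetry: replacing $a\mapsto a^{-1}$ and reindexing, the superdiagonal entry for $\{u_i^\Downarrow\}$ must agree with that for $\{u_i\}$ because the target expression $(q^i-q^{-i})(q^{d-i+1}-q^{i-d-1})$ is manifestly invariant under $a\mapsto a^{-1}$.

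The main obstacle I anticipate is the bookkeeping in the first computation on $\{u_i\}$: pushing $\psi$ through the product $\prod_{j=0}^{i-1}(A-\theta_j I)$ generates $K^{\pm1}$ correction terms at each stage, and one must verify that these collapse correctly so that the final coefficient is exactly the stated product. The cleanest route is likely to avoid the telescoping entirely and instead derive the recursion $c_i$ algebraically from $K=(I-a^{-1}q\psi)\mathcal{M}$ and $\mathcal{M}\psi=q^2\psi\mathcal{M}$ (Lemmas \ref{lemma:K-M} and \ref{lemma:Mpsi}), since these reduce everything to manipulating eigenvalues of $K$ and $\mathcal{M}$; once $c_i$ is pinned down for one basis, transferring it to the other two is immediate from the $q$-exponential intertwining relations already established, so those steps should present no real difficulty.
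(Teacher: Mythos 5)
Your main route is sound but diverges from the paper, and one of your proposed shortcuts would fail. The paper disposes of the basis $\{u_i\}_{i=0}^d$ by citing an earlier result (\cite[Line~(23)]{bockting2}) and then transfers to $\{u_i^\Downarrow\}_{i=0}^d$ and $\{w_i\}_{i=0}^d$ via Lemma \ref{lemma:uw}; your transfer step is the same (and your observation that $\psi$ commutes with any power series in $\psi$ is exactly why it works). For the first basis, your telescoping computation does go through as a self-contained replacement for the citation: writing $\psi u_i=c_iu_{i-1}$ and applying Lemma \ref{lemma:Apsi} to $u_{i+1}=(A-\theta_iI)u_i$, the $u_{i-1}$-terms cancel because $\theta_{i-1}-\theta_i=(q-q^{-1})(aq^{d-2i+1}-a^{-1}q^{2i-d-1})$, leaving the recursion $c_{i+1}=c_i+(q-q^{-1})(q^{d-2i}-q^{2i-d})$, whose solution with $c_0=0$ is the stated entry $(q^i-q^{-i})(q^{d-i+1}-q^{i-d-1})$. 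This buys you independence from the external reference at the cost of the bookkeeping you anticipated.

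However, the route you call ``cleanest'' --- deriving the $c_i$ from $K=(I-a^{-1}q\psi)\mathcal{M}$ and $\mathcal{M}\psi=q^2\psi\mathcal{M}$ alone --- cannot work. Those operator identities are invariant under rescaling each basis vector $u_i\mapsto\lambda_iu_i$ with $\lambda_i\neq0$, while the superdiagonal entries transform as $c_i\mapsto c_i\lambda_i/\lambda_{i-1}$; so no amount of eigenvalue manipulation with $K$ and $\mathcal{M}$ can pin down the $c_i$. The scalars are fixed only by the normalization \eqref{eq:ubasisdef}, which ties $u_i$ to powers of $A$, so any derivation must invoke the interaction of $\psi$ with $A$ (Lemma \ref{lemma:Apsi}), as your first route does. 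Likewise, the $\Downarrow$-symmetry remark alone only shows the answer for $\{u_i^\Downarrow\}_{i=0}^d$ has the same \emph{form}; the clean justification is the one you also give, namely $u_i^\Downarrow=\exp_q\bigl(\tfrac{a}{q-q^{-1}}\psi\bigr)w_i$ from Lemma \ref{lemma:uw}.
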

\begin{proof}
By  \cite[Line (23)]{bockting2}, $\widehat\psi$ represents $\psi$ with respect to $\{u_i\}_{i=0}^d$.  
The remaining assertions follow from Lemma \ref{lemma:uw}.
\end{proof}\\

Next we give the matrices that represent $\mathcal{M}^{\pm 1}$ with respect to the bases $\{u_i\}_{i=0}^d$, $\{u_i^\Downarrow\}_{i=0}^d$. 

\begin{lemma}
We give the matrix in ${\rm Mat}_{d+1}(\mathbb{K})$ that represents $\mathcal{M}$ with respect to $\{u_i\}_{i=0}^d$.  This matrix is upper triangular.  For $0\leq i\leq j\leq d$, the $(i,j)$-entry is
\begin{equation}
a^{i-j}q^{d-j-i}\left(q-q^{-1}\right)^{2(j-i)} \frac{\ \ [j]_q^![d-i]_q^!\ \ }{[i]_q^![d-j]_q^!}.
\end{equation}
\end{lemma}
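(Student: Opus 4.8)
The plan is to read the matrix of $\mathcal{M}$ directly off the factorization $\mathcal{M}=K\sum_{n=0}^d a^{-n}q^{-n}\psi^n$ from Lemma \ref{lemma:Msum}, using the explicit matrices of $K$ and $\psi$ with respect to $\{u_i\}_{i=0}^d$. Since both factors are already represented in this basis, the whole computation reduces to a matrix multiplication, and upper triangularity will come for free. First I would record the action of $\psi$: by Proposition \ref{prop:matpsi} and Definition \ref{def:psihat}, $\psi u_j=c_j u_{j-1}$ with $c_j=(q^j-q^{-j})(q^{d-j+1}-q^{j-d-1})$. Rewriting each factor through $q^m-q^{-m}=(q-q^{-1})[m]_q$ puts this in the form $c_j=(q-q^{-1})^2[j]_q[d-j+1]_q$, which is the shape that later telescopes into $q$-factorials. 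Because $\psi$ strictly lowers the index, iterating gives $\psi^n u_j=\bigl(\prod_{k=j-n+1}^{j}c_k\bigr)u_{j-n}$ (understood to vanish once $n>j$), so each power $\psi^n$ sends $u_j$ to a scalar multiple of $u_{j-n}$ and the resulting matrix is upper triangular.

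Next I would fold in $K$, which acts diagonally by $Ku_{j-n}=q^{d-2(j-n)}u_{j-n}$ since $\mathrm{diag}(q^d,\dots,q^{-d})$ represents $K$. Substituting into the sum of Lemma \ref{lemma:Msum} and re-indexing by $i=j-n$ (so that $n=j-i$ runs from $0$ to $j$), I obtain
\[
\mathcal{M} u_j=\sum_{i=0}^{j} a^{i-j}q^{d-i-j}\left(\prod_{k=i+1}^{j}c_k\right)u_i,
\]
which exhibits the $(i,j)$-entry as $a^{i-j}q^{d-i-j}\prod_{k=i+1}^{j}c_k$ for $0\leq i\leq j\leq d$.

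The remaining step, and the only place any care is needed, is to evaluate the product $\prod_{k=i+1}^{j}c_k$. Pulling out the constant $(q-q^{-1})^{2(j-i)}$ and splitting the two families of $q$-integers, I would apply the telescoping identities $\prod_{k=i+1}^{j}[k]_q=[j]_q^!/[i]_q^!$ and $\prod_{k=i+1}^{j}[d-k+1]_q=[d-i]_q^!/[d-j]_q^!$; the latter follows by noting that $d-k+1$ ranges over $d-i,d-i-1,\dots,d-j+1$ as $k$ runs from $i+1$ to $j$. Combining these two quotients with the prefactor $a^{i-j}q^{d-i-j}$ yields exactly the claimed $(i,j)$-entry $a^{i-j}q^{d-j-i}(q-q^{-1})^{2(j-i)}\,[j]_q^![d-i]_q^!/([i]_q^![d-j]_q^!)$.

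I expect no conceptual obstacle here: the argument is entirely a consequence of Lemma \ref{lemma:Msum} together with the known matrices of $K$ and $\psi$. The one thing to watch is the bookkeeping under the re-indexing $i=j-n$, namely keeping the exponents of $a$ and $q$ consistent and correctly identifying the two factorial quotients arising from the product of the $c_k$. Once those are tracked carefully the identity falls out, and no new tools beyond those already established are required.
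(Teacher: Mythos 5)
Your proposal is correct and follows exactly the paper's route: the paper's proof is the one-line instruction to combine Lemma \ref{lemma:Msum} (in the form $\mathcal{M}=K\sum_{n}a^{-n}q^{-n}\psi^{n}$) with the diagonal matrix of $K$ and the matrix $\widehat\psi$ from Proposition \ref{prop:matpsi}, which is precisely the computation you carry out. Your re-indexing and the telescoping of the products $\prod[k]_q$ and $\prod[d-k+1]_q$ are all accurate.
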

\begin{proof}
The matrix ${\rm diag}(q^d,q^{d-2},\mathellipsis,q^{-d})$ represents $K$ with respect to $\{u_i\}_{i=0}^d$.
Use this fact along with Lemma \ref{lemma:Msum} and Proposition  \ref{prop:matpsi}.
\end{proof}

\begin{lemma}
We give the matrix in ${\rm Mat}_{d+1}(\mathbb{K})$ that represents $\mathcal{M}^{-1}$ with respect to $\{u_i\}_{i=0}^d$.  
For $0\leq i\leq d$, the $(i,i)$-entry is $q^{2i-d}$.  For $1\leq i\leq d$, the $(i-1,i)$-entry is $$-a^{-1}q^{2i-d-1}\left(q^i-q^{-i}\right)\left(q^{d-i+1}-q^{i-d-1}\right).$$  All other entries are zero.
\end{lemma}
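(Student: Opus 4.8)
The plan is to mirror the proof of the preceding lemma, replacing Lemma \ref{lemma:Msum} by Lemma \ref{lemma:Minv-exp}. Among the four expressions for $\mathcal{M}^{-1}$ offered by Lemma \ref{lemma:Minv-exp}, I would use $\mathcal{M}^{-1}=(I-a^{-1}q^{-1}\psi)K^{-1}$, since this keeps the diagonalizable factor $K^{-1}$ on the right, where it is simplest to handle in the basis $\{u_i\}_{i=0}^d$ (which is adapted to $K$). Since the map sending each $T\in{\rm End}(V)$ to the matrix that represents it with respect to $\{u_i\}_{i=0}^d$ is an algebra homomorphism ${\rm End}(V)\to{\rm Mat}_{d+1}(\mathbb{K})$, the matrix representing $\mathcal{M}^{-1}$ equals the product of the matrices representing $I-a^{-1}q^{-1}\psi$ and $K^{-1}$ with respect to $\{u_i\}_{i=0}^d$.

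First I would record the two factors. By Definition \ref{def:KB}, $K$ is represented by ${\rm diag}(q^d,q^{d-2},\ldots,q^{-d})$, so $K^{-1}$ is represented by the diagonal matrix whose $(i,i)$-entry is $q^{2i-d}$. By Proposition \ref{prop:matpsi} together with Definition \ref{def:psihat}, $\psi$ is represented by $\widehat\psi$, so $I-a^{-1}q^{-1}\psi$ is represented by the bidiagonal matrix with every diagonal entry equal to $1$ and, for $1\leq i\leq d$, $(i-1,i)$-entry equal to $-a^{-1}q^{-1}(q^i-q^{-i})(q^{d-i+1}-q^{i-d-1})$.

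Next I would multiply these two matrices. Right multiplication by the diagonal matrix representing $K^{-1}$ scales column $j$ by $q^{2j-d}$. The diagonal entry in column $i$ becomes $1\cdot q^{2i-d}=q^{2i-d}$, and the $(i-1,i)$-entry becomes $-a^{-1}q^{-1}(q^i-q^{-i})(q^{d-i+1}-q^{i-d-1})\cdot q^{2i-d}=-a^{-1}q^{2i-d-1}(q^i-q^{-i})(q^{d-i+1}-q^{i-d-1})$, while all remaining entries vanish. This is precisely the claimed matrix.

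The argument is entirely mechanical, so there is no genuine obstacle; the only points requiring care are the bookkeeping of the powers of $q$ (combining the $q^{-1}$ in the coefficient $a^{-1}q^{-1}$, the contribution from $\widehat\psi$, and the factor $q^{2i-d}$ from $K^{-1}$) and the choice of the factorization of $\mathcal{M}^{-1}$ that leaves $K^{-1}$ diagonal in this basis. As a consistency check one may instead compute the action directly: evaluating $\mathcal{M}^{-1}=(I-a^{-1}q^{-1}\psi)K^{-1}$ on $u_i\in U_i$ and using $\psi u_i=(q^i-q^{-i})(q^{d-i+1}-q^{i-d-1})u_{i-1}$ from Proposition \ref{prop:matpsi} yields $\mathcal{M}^{-1}u_i=q^{2i-d}u_i-a^{-1}q^{2i-d-1}(q^i-q^{-i})(q^{d-i+1}-q^{i-d-1})u_{i-1}$, reproducing the same two entries.
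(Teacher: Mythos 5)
Your proposal is correct and follows essentially the same route as the paper: the paper's proof also combines the diagonal matrix ${\rm diag}(q^{-d},q^{2-d},\ldots,q^{d})$ representing $K^{-1}$ with Lemma \ref{lemma:Minv-exp} and Proposition \ref{prop:matpsi}, and your choice of the factorization $\mathcal{M}^{-1}=(I-a^{-1}q^{-1}\psi)K^{-1}$ with the column-scaling computation is exactly the intended (if unstated) bookkeeping. The only difference is that you spell out the multiplication that the paper leaves implicit.
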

\begin{proof}
The matrix ${\rm diag}(q^{-d},q^{2-d},\mathellipsis,q^d)$ represents $K^{-1}$ with respect to $\{u_i\}_{i=0}^d$.
 Use this fact along with Lemma \ref{lemma:Minv-exp} and Proposition  \ref{prop:matpsi}.
\end{proof}

\begin{lemma}
We give the matrix in ${\rm Mat}_{d+1}(\mathbb{K})$ that represents $\mathcal{M}$ with respect to $\{u_i^\Downarrow\}_{i=0}^d$.  This matrix is upper triangular.  For $0\leq i\leq j\leq d$, the $(i,j)$-entry is
\begin{equation}
a^{j-i}q^{d-j-i}\left(q-q^{-1}\right)^{2(j-i)} \frac{\ \ [j]_q^![d-i]_q^!\ \ }{[i]_q^![d-j]_q^!}.
\end{equation}
\end{lemma}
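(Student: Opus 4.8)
The plan is to follow the same route used to obtain the matrix of $\mathcal{M}$ with respect to $\{u_i\}_{i=0}^d$, but with $K$ replaced by $B$ and with the matching expression taken from Lemma \ref{lemma:Msum}. Both ingredients are already available: by the remark following \eqref{eq:Amatrices}, the diagonal matrix ${\rm diag}(q^d,q^{d-2},\mathellipsis,q^{-d})$ represents $B$ with respect to $\{u_i^\Downarrow\}_{i=0}^d$, and by Proposition \ref{prop:matpsi} the matrix $\widehat\psi$ represents $\psi$ with respect to $\{u_i^\Downarrow\}_{i=0}^d$. Since the target $(i,j)$-entry carries the power $a^{j-i}$ rather than $a^{i-j}$, I would use the third expression of Lemma \ref{lemma:Msum}, namely $\mathcal{M}=B\sum_{n=0}^d a^{n}q^{-n}\psi^n$, whose $a$-powers and reliance on $B$ are the ones appropriate to this basis.

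First I would record that, by Definition \ref{def:psihat}, $\widehat\psi$ is strictly upper triangular with nonzero entries only on the first superdiagonal. Consequently $\widehat\psi^{\,n}$ has a nonzero $(i,j)$-entry exactly when $j-i=n$, and that entry is the telescoping product $\prod_{s=i+1}^{j}(q^s-q^{-s})(q^{d-s+1}-q^{s-d-1})$ of the consecutive superdiagonal entries of $\widehat\psi$ along the unique path from $i$ to $j$. Writing each factor as $(q-q^{-1})[s]_q$ and $(q-q^{-1})[d-s+1]_q$ respectively, this product becomes $(q-q^{-1})^{2(j-i)}$ times $\prod_{s=i+1}^{j}[s]_q[d-s+1]_q$, and the latter collapses, after reindexing the second factor by $t=d-s+1$, to $\tfrac{[j]_q^!}{[i]_q^!}\cdot\tfrac{[d-i]_q^!}{[d-j]_q^!}$.

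Next, since $B$ is diagonal with $(i,i)$-entry $q^{d-2i}$, left multiplication by $B$ simply scales row $i$ by $q^{d-2i}$, so the $(i,j)$-entry of $B\,a^{n}q^{-n}\widehat\psi^{\,n}$ (with $n=j-i$) equals $q^{d-2i}a^{j-i}q^{-(j-i)}$ times the entry just computed. Collecting the $q$-powers as $q^{d-2i}q^{-(j-i)}=q^{d-i-j}$ then yields exactly the claimed $(i,j)$-entry $a^{j-i}q^{d-i-j}(q-q^{-1})^{2(j-i)}\tfrac{[j]_q^![d-i]_q^!}{[i]_q^![d-j]_q^!}$; upper triangularity is automatic since only the terms with $j\geq i$ contribute.

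There is no genuine obstacle here, the one point needing care being the factorial bookkeeping in the telescoping product, in particular the reindexing $\prod_{s=i+1}^{j}[d-s+1]_q=[d-i]_q^!/[d-j]_q^!$. As an independent check, and an alternative proof, one may instead apply the already-established formula for the matrix of $\mathcal{M}$ with respect to $\{u_i\}_{i=0}^d$ to the inverted system $\Phi^\Downarrow$: here $\{u_i^\Downarrow\}_{i=0}^d$ plays the role of $\{u_i\}_{i=0}^d$ (since $U_0^\Downarrow=U_0$ lets us take the same $u_0$, and $\theta_i^\Downarrow=\theta_{d-i}$), while $\mathcal{M}^\Downarrow=\mathcal{M}$, $K^\Downarrow=B$ from Definition \ref{def:KB}, and the $q$-Racah parameter transforms as $a\mapsto a^{-1}$ under the second inversion. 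The substitution $a\mapsto a^{-1}$ turns the $a^{i-j}$ of the $\{u_i\}$-formula into $a^{j-i}$, reproducing the stated entry.
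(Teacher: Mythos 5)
Your proposal is correct and follows essentially the same route as the paper's proof: it combines the expression $\mathcal{M}=B\sum_{n=0}^d a^{n}q^{-n}\psi^n$ from Lemma \ref{lemma:Msum} with the facts that ${\rm diag}(q^d,q^{d-2},\mathellipsis,q^{-d})$ represents $B$ and $\widehat\psi$ represents $\psi$ with respect to $\{u_i^\Downarrow\}_{i=0}^d$, and your telescoping computation of the entries of $\widehat\psi^{\,n}$ is the correct expansion of the details the paper leaves implicit. The consistency check via $\Phi^\Downarrow$ and $a\mapsto a^{-1}$ is a nice bonus but not needed.
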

\begin{proof}
The matrix ${\rm diag}(q^d,q^{d-2},\mathellipsis,q^{-d})$ represents $B$ with respect to $\{u_i^\Downarrow\}_{i=0}^d$.
 Use this fact along with Lemma \ref{lemma:Msum} and Proposition  \ref{prop:matpsi}.
\end{proof}

\begin{lemma}
We give the matrix in ${\rm Mat}_{d+1}(\mathbb{K})$ that represents $\mathcal{M}^{-1}$ with respect to $\{u_i^\Downarrow\}_{i=0}^d$.  
For $0\leq i\leq d$, the $(i,i)$-entry is $q^{2i-d}$.  For $1\leq i\leq d$, the $(i-1,i)$-entry is $$-aq^{2i-d-1}\left(q^i-q^{-i}\right)\left(q^{d-i+1}-q^{i-d-1}\right).$$  All other entries are zero.
\end{lemma}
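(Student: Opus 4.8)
The plan is to mirror the computation carried out for the basis $\{u_i\}_{i=0}^d$ in the previous lemma, with $B$ playing the role that $K$ played there. The natural starting point is the fact, recorded earlier in this section, that the diagonal matrix ${\rm diag}(q^d,q^{d-2},\ldots,q^{-d})$ represents $B$ with respect to $\{u_i^\Downarrow\}_{i=0}^d$. Inverting, the diagonal matrix whose $(i,i)$-entry is $q^{2i-d}$ represents $B^{-1}$ with respect to $\{u_i^\Downarrow\}_{i=0}^d$ for $0\leq i\leq d$.

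Next I would invoke Lemma \ref{lemma:Minv-exp}, which expresses $\mathcal{M}^{-1}$ in four equal ways; I would use $\mathcal{M}^{-1}=(I-aq^{-1}\psi)B^{-1}$. By Proposition \ref{prop:matpsi}, the matrix $\widehat\psi$ of Definition \ref{def:psihat} represents $\psi$ with respect to $\{u_i^\Downarrow\}_{i=0}^d$; recall that $\widehat\psi$ is strictly upper triangular, its only nonzero entries lying on the superdiagonal, with $(i-1,i)$-entry $(q^i-q^{-i})(q^{d-i+1}-q^{i-d-1})$ for $1\leq i\leq d$. Thus $I-aq^{-1}\widehat\psi$ is bidiagonal, having ones on the diagonal and the entry $-aq^{-1}(q^i-q^{-i})(q^{d-i+1}-q^{i-d-1})$ in position $(i-1,i)$.

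It then remains to form the matrix product for $(I-aq^{-1}\psi)B^{-1}$. Since the second factor is diagonal, right-multiplication simply scales columns, so the product is again bidiagonal. The diagonal entries come solely from $I\cdot B^{-1}$, giving $(i,i)$-entry $q^{2i-d}$. The superdiagonal entries come from $-aq^{-1}\widehat\psi$ times $B^{-1}$, where the $(i-1,i)$-entry of $\widehat\psi$ is scaled by the $(i,i)$-entry $q^{2i-d}$ of $B^{-1}$, yielding
\[
-aq^{-1}\bigl(q^i-q^{-i}\bigr)\bigl(q^{d-i+1}-q^{i-d-1}\bigr)\,q^{2i-d}=-aq^{2i-d-1}\bigl(q^i-q^{-i}\bigr)\bigl(q^{d-i+1}-q^{i-d-1}\bigr),
\]
which is the asserted $(i-1,i)$-entry; all remaining entries vanish.

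There is no genuine obstacle here, as the argument reduces to a direct matrix multiplication. The only point requiring attention is the selection among the four equal expressions for $\mathcal{M}^{-1}$ in Lemma \ref{lemma:Minv-exp}: because it is $B$, and not $K$, that is diagonal on $\{u_i^\Downarrow\}_{i=0}^d$, one must use one of the two expressions written in terms of $B^{-1}$. Either $B^{-1}$ expression produces the same matrix, as it must since they represent one and the same operator; I would use $(I-aq^{-1}\psi)B^{-1}$ so that scaling by the diagonal $B^{-1}$ on the right keeps the bookkeeping transparent. The outcome is exactly the parallel of the result for $\{u_i\}_{i=0}^d$, but with $a$ in place of $a^{-1}$, as expected from the interchange of $K$ and $B$.
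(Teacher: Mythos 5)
Your proposal is correct and follows exactly the paper's intended argument: the paper's proof simply cites the diagonal matrix ${\rm diag}(q^{-d},q^{2-d},\ldots,q^{d})$ representing $B^{-1}$ with respect to $\{u_i^\Downarrow\}_{i=0}^d$, Lemma \ref{lemma:Minv-exp}, and Proposition \ref{prop:matpsi}, and you have carried out precisely that computation using the expression $\mathcal{M}^{-1}=(I-aq^{-1}\psi)B^{-1}$. The column-scaling bookkeeping and the resulting entries all check out.
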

\begin{proof}
The matrix ${\rm diag}(q^{-d},q^{2-d},\mathellipsis,q^d)$ represents $B^{-1}$ with respect to $\{u_i^\Downarrow\}_{i=0}^d$.
 Use this fact along with Lemma \ref{lemma:Minv-exp} and Proposition  \ref{prop:matpsi}.
\end{proof}

Next we give the matrices that represent $K$ with respect to the bases $\{u_i^\Downarrow\}_{i=0}^d$, $\{w_i\}_{i=0}^d$.

\begin{lemma}
We give the matrix in ${\rm Mat}_{d+1}(\mathbb{K})$ that represents $K$ with respect to $\{u_i^\Downarrow\}_{i=0}^d$.  
For $0\leq i\leq d$, the $(i,i)$-entry is $q^{d-2i}$.  
 For $0\leq i<j\leq d$, the $(i,j)$-entry is  
\begin{equation}
\left(1-a^{-2}\right)a^{j-i}q^{d-j-i} \left(q-q^{-1}\right)^{2(j-i)} \frac{\ \ [j]_q^![d-i]_q^!\ \ }{[i]_q^![d-j]_q^!}. 
\end{equation}
All other entries are zero.
\end{lemma}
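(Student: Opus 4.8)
The plan is to realize $K$ through its factorization in terms of $\mathcal{M}$ and $\psi$, and then pass to matrices with respect to $\{u_i^\Downarrow\}_{i=0}^d$, where we already know the matrices representing both $\mathcal{M}$ and $\psi$. By Lemma \ref{lemma:K-M} we have $K=(I-a^{-1}q\psi)\mathcal{M}$. Since the map sending a linear transformation to the matrix representing it with respect to a fixed basis is a $\mathbb{K}$-algebra isomorphism, the matrix $\widehat{K}$ representing $K$ with respect to $\{u_i^\Downarrow\}_{i=0}^d$ equals $(I-a^{-1}q\widehat\psi)\widehat{\mathcal{M}}$, where $\widehat\psi$ is the matrix from Definition \ref{def:psihat} (which represents $\psi$ with respect to $\{u_i^\Downarrow\}_{i=0}^d$ by Proposition \ref{prop:matpsi}) and $\widehat{\mathcal{M}}$ is the upper-triangular matrix representing $\mathcal{M}$ with respect to $\{u_i^\Downarrow\}_{i=0}^d$ established in the earlier lemma, whose $(i,j)$-entry for $i\leq j$ is $a^{j-i}q^{d-j-i}(q-q^{-1})^{2(j-i)}[j]_q^![d-i]_q^!/([i]_q^![d-j]_q^!)$.

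First I would compute the product $\widehat\psi\widehat{\mathcal{M}}$. Because $\widehat\psi$ has a single nonzero diagonal, with $(i-1,i)$-entry $(q^i-q^{-i})(q^{d-i+1}-q^{i-d-1})$, the $(i,j)$-entry of $\widehat\psi\widehat{\mathcal{M}}$ collapses to the single term $\widehat\psi_{i,i+1}\widehat{\mathcal{M}}_{i+1,j}$. I would rewrite the two factors of $\widehat\psi_{i,i+1}$ as $(q-q^{-1})^2[i+1]_q[d-i]_q$ using $q^n-q^{-n}=(q-q^{-1})[n]_q$, and substitute the formula for $\widehat{\mathcal{M}}_{i+1,j}$.

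The main work is then the $q$-factorial bookkeeping: the factor $[i+1]_q$ cancels against $[i+1]_q^!$ in $\widehat{\mathcal{M}}_{i+1,j}$ to leave $[i]_q^!$ in the denominator, while $[d-i]_q$ combines with $[d-i-1]_q^!$ to produce $[d-i]_q^!$. After collecting the powers of $a$, $q$, and $(q-q^{-1})$, the $(i,j)$-entry of $-a^{-1}q\,\widehat\psi\widehat{\mathcal{M}}$ matches $\widehat{\mathcal{M}}_{ij}$ up to the scalar $-a^{-2}$; adding the $I\cdot\widehat{\mathcal{M}}$ contribution $\widehat{\mathcal{M}}_{ij}$ then produces the common factor $a^{j-i}(1-a^{-2})$, which is exactly the claimed off-diagonal entry. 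The diagonal entries follow since $\widehat{\mathcal{M}}_{ii}=q^{d-2i}$ and the $\psi$-contribution to position $(i,i)$ involves $\widehat{\mathcal{M}}_{i+1,i}=0$ by upper-triangularity; likewise all entries with $i>j$ vanish because both $\widehat{\mathcal{M}}_{ij}$ and $\widehat{\mathcal{M}}_{i+1,j}$ are zero. I expect the only delicate point to be the careful handling of the $q$-factorial cancellations and the accounting of the powers of $(q-q^{-1})$, which is routine but error-prone; the alternative factorization $K=\mathcal{M}(I-a^{-1}q^{-1}\psi)$ from Lemma \ref{lemma:K-M} would give the same answer via $\widehat{\mathcal{M}}\widehat\psi$ and could serve as an independent check.
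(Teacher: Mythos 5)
Your proof is correct, but it takes a different route from the paper's. The paper proves this lemma by first deriving the operator identity $K=\bigl(a^{-2}I+(1-a^{-2})\sum_{n=0}^d a^nq^{n}\psi^n\bigr)B$ (obtained by evaluating the expression for $\psi$ in \eqref{eq:psiequations1} using the invertibility statement \eqref{eq:invertible1a}), and then converting to matrices using the diagonal matrix ${\rm diag}(q^d,\dots,q^{-d})$ for $B$ and $\widehat\psi$ for $\psi$; the entries then come from a product of a power series in the bidiagonal $\widehat\psi$ with a diagonal matrix. You instead start from $K=(I-a^{-1}q\psi)\mathcal{M}$ in Lemma \ref{lemma:K-M} and multiply the bidiagonal $I-a^{-1}q\widehat\psi$ against the full upper-triangular matrix $\widehat{\mathcal{M}}$ already computed for the basis $\{u_i^\Downarrow\}_{i=0}^d$; the key cancellation is that $-a^{-1}q\,\widehat\psi_{i,i+1}\widehat{\mathcal{M}}_{i+1,j}=-a^{-2}\widehat{\mathcal{M}}_{i,j}$ for $i<j$, which I have checked and which does yield the stated entries (and correctly leaves the diagonal untouched since $\widehat{\mathcal{M}}_{i+1,i}=0$). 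Your approach buys economy --- it reuses two results already on the page (Lemma \ref{lemma:K-M} and the $\widehat{\mathcal{M}}$ lemma) rather than deriving a fresh power-series identity, and the single-term collapse of $\widehat\psi\widehat{\mathcal{M}}$ makes the $q$-factorial bookkeeping short --- while the paper's version keeps the computation anchored to the diagonal matrix of $B$ and produces an identity relating $K$ and $B$ that is of some independent interest (its mirror image is used again for the matrix of $B$ with respect to $\{u_i\}_{i=0}^d$). Either argument is complete and rigorous.
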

\begin{proof}
Evaluating the equation on the right in \eqref{eq:psiequations1} 
using the equation on the left in \eqref{eq:invertible1a}  
we get
\begin{equation}
K=\left(a^{-2}I+(1-a^{-2})\sum_{n=0}^d a^nq^{n}\psi^n\right)B. \label{eq:KKBB2-1}
\end{equation}
The result follows from this along with Proposition \ref{prop:matpsi} and the fact that 
the matrix ${\rm diag}(q^d,q^{d-2},\mathellipsis,q^{-d})$ represents $B$ with respect to $\{u_i^\Downarrow\}_{i=0}^d$.
\end{proof}

\begin{lemma}\label{lemma:matKw}
We give the matrix in ${\rm Mat}_{d+1}(\mathbb{K})$ that represents $K$ with respect to $\{w_i\}_{i=0}^d$.  
For $0\leq i\leq d$, the $(i,i)$-entry is $q^{d-2i}$.  For $1\leq i\leq d$, the $(i-1,i)$-entry is $$-a^{-1}q^{d-2i+1} (q^{i}-q^{-i})(q^{d-i+1}-q^{i-d-1}).$$  All other entries are zero.
\end{lemma}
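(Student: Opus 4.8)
The plan is to compute $Kw_i$ directly against the basis $\{w_i\}_{i=0}^d$, using that both $\mathcal{M}$ and $\psi$ act on this basis in a transparent way. First I would record the two inputs that drive everything. On one hand, by Definition \ref{def:W} the vector $w_i$ spans the eigenspace $W_i$ of $\mathcal{M}$ with eigenvalue $q^{d-2i}$, so $\mathcal{M} w_i = q^{d-2i}w_i$ for $0\leq i\leq d$. On the other hand, by Proposition \ref{prop:matpsi} the matrix $\widehat\psi$ of Definition \ref{def:psihat} represents $\psi$ with respect to $\{w_i\}_{i=0}^d$; unwinding the representation convention $\psi w_j=\sum_i \widehat\psi_{ij}w_i$, and noting that the only nonzero entries of $\widehat\psi$ sit in positions $(i-1,i)$, this says
\begin{equation*}
\psi w_i=(q^i-q^{-i})(q^{d-i+1}-q^{i-d-1})\,w_{i-1}\qquad(1\leq i\leq d),
\end{equation*}
while $\psi w_0=0$ since the first column of $\widehat\psi$ vanishes.

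The main step is then to route $K$ through $\mathcal{M}$ and $\psi$. I would invoke the factorization $K=(I-a^{-1}q\psi)\mathcal{M}$, namely the left-hand equation of \eqref{eq:K-M} in Lemma \ref{lemma:K-M}, and apply it to $w_i$. The factor $\mathcal{M}$ first produces the scalar $q^{d-2i}$, after which $(I-a^{-1}q\psi)$ leaves a multiple of $w_i$ and subtracts a multiple of $w_{i-1}$ via the displayed action of $\psi$. Collecting terms yields
\begin{equation*}
Kw_i=q^{d-2i}w_i-a^{-1}q^{d-2i+1}(q^i-q^{-i})(q^{d-i+1}-q^{i-d-1})\,w_{i-1}.
\end{equation*}
Reading off the coefficients against $Kw_j=\sum_i X_{ij}w_i$ gives exactly the claimed diagonal entry $q^{d-2i}$ and superdiagonal $(i-1,i)$-entry, with all other entries zero.

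This mirrors the preceding lemmas of the section, where the matrices for $\mathcal{M}^{\pm 1}$ and $K$ relative to the $u$- and $u^\Downarrow$-bases were all obtained by the same device of combining Proposition \ref{prop:matpsi} with a relation expressing the operator through $\mathcal{M}$, $K$, or $B$ and $\psi$. I do not expect a genuine obstacle here: all the content is front-loaded into Proposition \ref{prop:matpsi} and Lemma \ref{lemma:K-M}, and what remains is a one-line application of a two-term operator to an eigenvector of $\mathcal{M}$. The only point requiring care is the bookkeeping of the power of $q$ in the off-diagonal entry, where the exponent $d-2i$ from $\mathcal{M} w_i$ combines with the extra factor $q$ in the coefficient $-a^{-1}q$ of the factorization to produce the exponent $d-2i+1$. (One could equally use the right-hand equation $K=\mathcal{M}(I-a^{-1}q^{-1}\psi)$ of \eqref{eq:K-M}; applying $\psi$ first and then the eigenvalue of $\mathcal{M}$ on $w_{i-1}$ gives the identical exponent, a useful internal check.)
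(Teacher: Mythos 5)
Your proposal is correct and is essentially the paper's own proof: both use the fact that ${\rm diag}(q^d,\mathellipsis,q^{-d})$ represents $\mathcal{M}$ with respect to $\{w_i\}_{i=0}^d$, Proposition \ref{prop:matpsi} for the action of $\psi$ on that basis, and the left-hand equation $K=(I-a^{-1}q\psi)\mathcal{M}$ of \eqref{eq:K-M}. The explicit coefficient bookkeeping and the cross-check via $K=\mathcal{M}(I-a^{-1}q^{-1}\psi)$ are fine.
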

\begin{proof}
The matrix ${\rm diag}(q^d,q^{d-2},\mathellipsis,q^{-d})$ represents $\mathcal{M}$ with respect to $\{w_i\}_{i=0}^d$.
	Use this fact along with Proposition \ref{prop:matpsi} and the equation on the left in (\ref{eq:K-M}).
\end{proof}

Next we give the matrices that represent $B$ with respect to the bases $\{u_i\}_{i=0}^d$, $\{w_i\}_{i=0}^d$.

\begin{lemma}
We give the matrix in ${\rm Mat}_{d+1}(\mathbb{K})$ that represents $B$ with respect to $\{u_i\}_{i=0}^d$.  
For $0\leq i\leq d$, the $(i,i)$-entry is $q^{d-2i}$.  
 For $0\leq i<j\leq d$, the $(i,j)$-entry is  
\begin{equation}
\left(1-a^{2}\right)a^{i-j}q^{d-j-i}\left(q-q^{-1}\right)^{2(j-i)} \frac{\ \ [j]_q^![d-i]_q^!\ \ }{[i]_q^![d-j]_q^!}.
\end{equation}
All other entries are zero.
\end{lemma}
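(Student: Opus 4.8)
The plan is to mirror the proof of the companion lemma giving the matrix of $K$ with respect to $\{u_i^\Downarrow\}_{i=0}^d$, interchanging the roles of $K,B$ and of $a,a^{-1}$. First I would record the analogue of \eqref{eq:KKBB2-1}, namely
\[
B=\left(a^{2}I+(1-a^{2})\sum_{n=0}^d a^{-n}q^{n}\psi^{n}\right)K.
\]
To prove this, start from the left equation of Theorem \ref{thm:BK}, $BK^{-1}=(I-a^{-1}q\psi)^{-1}(I-aq\psi)$, and note that all the maps involved are power series in $\psi$ and hence commute. A one-line manipulation rewrites $BK^{-1}$ as $a^2I+(1-a^2)(I-a^{-1}q\psi)^{-1}$; multiplying on the right by $K$ and expanding $(I-a^{-1}q\psi)^{-1}=\sum_{n=0}^d a^{-n}q^n\psi^n$ via \eqref{eq:psiinv1'} gives the displayed identity. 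Equivalently, it follows by applying the second inversion $\Downarrow$ to \eqref{eq:KKBB2-1}, using $K^\Downarrow=B$, $\psi^\Downarrow=\psi$, and $a\mapsto a^{-1}$ under $\Downarrow$.

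Next I would pass to matrices with respect to $\{u_i\}_{i=0}^d$. Here $K$ is represented by ${\rm diag}(q^d,q^{d-2},\mathellipsis,q^{-d})$, and $\psi$ is represented by $\widehat\psi$ (Proposition \ref{prop:matpsi}), which is supported on the superdiagonal with $(i-1,i)$-entry $(q^i-q^{-i})(q^{d-i+1}-q^{i-d-1})$ by Definition \ref{def:psihat}. Consequently $\widehat\psi^{\,n}$ is supported on the $n$-th superdiagonal, and its $(i,i+n)$-entry is the product $\prod_{k=1}^{n}(q^{i+k}-q^{-i-k})(q^{d-i-k+1}-q^{i+k-d-1})$. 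Multiplying the displayed expression for $B$ out, the diagonal entry is $a^2q^{d-2i}+(1-a^2)q^{d-2i}=q^{d-2i}$, the matrix is upper triangular (since $K$ is diagonal and $\widehat\psi$ strictly upper triangular), and for $i<j$ the $(i,j)$-entry equals $(1-a^2)a^{-(j-i)}q^{\,j-i}q^{d-2j}$ times the $(i,j)$-entry of $\widehat\psi^{\,j-i}$, the only term of the sum contributing to position $(i,j)$.

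The remaining work is to simplify that superdiagonal product into the claimed $q$-factorial form. Writing each factor as $q^m-q^{-m}=(q-q^{-1})[m]_q$, the $2(j-i)$ factors contribute $(q-q^{-1})^{2(j-i)}$ together with $\prod_{k=i+1}^{j}[k]_q$ and $\prod_{k=i+1}^{j}[d-k+1]_q$, which telescope to $\frac{[j]_q^!}{[i]_q^!}$ and $\frac{[d-i]_q^!}{[d-j]_q^!}$ respectively. Combining the powers of $q$ via $q^{j-i}q^{d-2j}=q^{d-i-j}$ then yields exactly $(1-a^2)a^{i-j}q^{d-i-j}(q-q^{-1})^{2(j-i)}\frac{[j]_q^![d-i]_q^!}{[i]_q^![d-j]_q^!}$, as desired. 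I expect the only delicate point to be the index bookkeeping in this last step, in particular shifting the index in $\prod(q^{d-k+1}-q^{k-d-1})$ so that it telescopes to $[d-i]_q^!/[d-j]_q^!$; but this is routine once the range $k=i+1,\mathellipsis,j$ has been pinned down from $n=j-i$.
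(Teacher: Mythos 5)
Your proposal is correct and follows essentially the same route as the paper: both establish the operator identity $B=\bigl(a^{2}I+(1-a^{2})\sum_{n=0}^{d}a^{-n}q^{n}\psi^{n}\bigr)K$ (the paper derives it from Lemma \ref{thm:psiequations} and Lemma \ref{lemma:invertible1}, you derive the same identity from Theorem \ref{thm:BK} and \eqref{eq:psiinv1'}, which is equivalent) and then read off the matrix entries using Proposition \ref{prop:matpsi} and the diagonal matrix representing $K$. Your explicit computation of the entries of $\widehat\psi^{\,n}$ and the telescoping into $q$-factorials is exactly the verification the paper leaves implicit, and it checks out.
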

\begin{proof}
Evaluating the equation on the left in \eqref{eq:psiequations1}  
using the equation on the right in \eqref{eq:invertible1a}  
we get
\begin{equation}
B=\left(a^{2}I+(1-a^{2})\sum_{n=0}^d a^{-n}q^{n}\psi^n\right)K. \label{eq:KKBB2-2}
\end{equation}
The result follows from this along with Proposition \ref{prop:matpsi} and the fact that 
the matrix ${\rm diag}(q^d,q^{d-2},\mathellipsis,q^{-d})$ represents $K$ with respect to $\{u_i\}_{i=0}^d$.
\end{proof}

\begin{lemma} 
We give the matrix in ${\rm Mat}_{d+1}(\mathbb{K})$ that represents $B$ with respect to $\{w_i\}_{i=0}^d$.  
For $0\leq i\leq d$, the $(i,i)$-entry is $q^{d-2i}$.  For $1\leq i\leq d$, the $(i-1,i)$-entry is $$-aq^{d-2i+1} (q^{i}-q^{-i})(q^{d-i+1}-q^{i-d-1}).$$  All other entries are zero.
\end{lemma}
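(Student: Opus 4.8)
The plan is to mimic the proof of Lemma \ref{lemma:matKw}, with $K$ replaced by $B$ and the factorization of $K$ replaced by the corresponding factorization of $B$. Both ingredients needed are already in hand. By Definition \ref{def:W}, $W_i$ is the eigenspace of $\mathcal{M}$ for the eigenvalue $q^{d-2i}$, so $\mathcal{M}$ is represented with respect to $\{w_i\}_{i=0}^d$ by the diagonal matrix ${\rm diag}(q^d,q^{d-2},\mathellipsis,q^{-d})$. By Proposition \ref{prop:matpsi}, the matrix $\widehat\psi$ of Definition \ref{def:psihat} represents $\psi$ with respect to $\{w_i\}_{i=0}^d$.

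Next I would bring in the factorization of $B$ in terms of $\mathcal{M}$ and $\psi$. The equation on the left in \eqref{eq:B-M} gives $B=(I-aq\psi)\mathcal{M}$. Passing to matrices with respect to $\{w_i\}_{i=0}^d$, the matrix representing $B$ equals $(I-aq\widehat\psi)\,{\rm diag}(q^d,q^{d-2},\mathellipsis,q^{-d})$.

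It remains to multiply out this product. Since $\widehat\psi$ is supported on the superdiagonal, so is the product $\widehat\psi\,{\rm diag}(q^d,\mathellipsis,q^{-d})$, and its $(i-1,i)$-entry is the $(i-1,i)$-entry of $\widehat\psi$ scaled by the $i$th diagonal entry $q^{d-2i}$. Multiplying by $-aq$ and substituting the value from Definition \ref{def:psihat} yields the asserted $(i-1,i)$-entry $-aq^{d-2i+1}(q^{i}-q^{-i})(q^{d-i+1}-q^{i-d-1})$, while the identity term of the factorization contributes the diagonal entries $q^{d-2i}$. This computation presents no genuine obstacle; the only point requiring attention is to use the factorization $B=(I-aq\psi)\mathcal{M}$, so that the coefficient $aq$ appears in place of the $a^{-1}q$ that arises for $K$ in Lemma \ref{lemma:matKw}.
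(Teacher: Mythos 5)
Your proposal is correct and follows exactly the paper's own argument: the paper's proof likewise combines the fact that ${\rm diag}(q^d,q^{d-2},\mathellipsis,q^{-d})$ represents $\mathcal{M}$ with respect to $\{w_i\}_{i=0}^d$, Proposition \ref{prop:matpsi}, and the equation $B=(I-aq\psi)\mathcal{M}$ from \eqref{eq:B-M}. The explicit matrix multiplication you carry out is the intended (and correct) computation.
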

\begin{proof}
The matrix ${\rm diag}(q^d,q^{d-2},\mathellipsis,q^{-d})$ represents $\mathcal{M}$ with respect to $\{w_i\}_{i=0}^d$.
	Use this fact along with Proposition \ref{prop:matpsi} and the equation on the left in (\ref{eq:B-M}). 
	\end{proof}\\

Next we consider the matrices 
\begin{equation}
\exp_q\left(\frac{a}{q-q^{-1}}\widehat{\psi}\right),\qquad\qquad \exp_q\left(\frac{a^{-1}}{q-q^{-1}}\widehat{\psi}\right).\label{eq:expmats1}
\end{equation}
Their inverses are
\begin{equation}
\exp_{q^{-1}}\left(-\frac{a}{q-q^{-1}}\widehat{\psi}\right),\qquad\qquad \exp_{q^{-1}}\left(-\frac{a^{-1}}{q-q^{-1}}\widehat{\psi}\right)\label{eq:expmats2}
\end{equation}
respectively.  
The matrices in \eqref{eq:expmats1},\eqref{eq:expmats2}
are upper triangular.  We now consider the entries of \eqref{eq:expmats1},\eqref{eq:expmats2}.\\

\begin{lemma}
For $0\neq x\in\mathbb{K}$, the matrix $\exp_q(x\widehat{\psi})$ is upper triangular.  For $0\leq i\leq j\leq d$, the $(i,j)$-entry is
\begin{equation}
x^{j-i}q^{\binom{j-i}{2}}\left(q-q^{-1}\right)^{2(j-i)}\cdot \frac{ [j]_q^![d-i]_q^! }{[i]_q^![j-i]_q^![d-j]_q^!}.
\end{equation}
The matrix $exp_{q^{-1}}(x\widehat{\psi})$ is upper triangular.  For $0\leq i\leq j\leq d$, the $(i,j)$-entry is
\begin{equation}
x^{j-i}q^{-\binom{j-i}{2}}\left(q-q^{-1}\right)^{2(j-i)}\cdot \frac{ [j]_q^![d-i]_q^! }{[i]_q^![j-i]_q^![d-j]_q^!}.
\end{equation}
\end{lemma}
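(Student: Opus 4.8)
The plan is to compute the matrix powers $\widehat{\psi}^n$ explicitly and substitute them into the defining series for the two $q$-exponentials. First I would record the structural fact underlying everything: by Definition \ref{def:psihat} the matrix $\widehat{\psi}$ is supported on the first superdiagonal, with $(k-1,k)$-entry $c_k := (q^k-q^{-k})(q^{d-k+1}-q^{k-d-1})$ and all other entries zero. A trivial induction then shows $\widehat{\psi}^n$ is supported on the $n$-th superdiagonal, its $(i,i+n)$-entry being the product $c_{i+1}c_{i+2}\cdots c_{i+n}$ of the intervening superdiagonal entries of $\widehat{\psi}$. In particular both matrices in the statement are upper triangular, and for fixed $0\leq i\leq j\leq d$ only the single term $n=j-i$ of the defining series contributes to the $(i,j)$-entry; all higher powers vanish in that position and lower powers do not reach it.

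Next I would evaluate the product $\prod_{k=i+1}^{j}c_k$. Using $q^k-q^{-k}=(q-q^{-1})[k]_q$ and $q^{d-k+1}-q^{k-d-1}=(q-q^{-1})[d-k+1]_q$, each factor becomes $c_k=(q-q^{-1})^2[k]_q[d-k+1]_q$, so the product separates into $(q-q^{-1})^{2(j-i)}$ times two telescoping factorial products,
\begin{equation*}
\prod_{k=i+1}^{j}[k]_q=\frac{[j]_q^!}{[i]_q^!},\qquad\qquad \prod_{k=i+1}^{j}[d-k+1]_q=\frac{[d-i]_q^!}{[d-j]_q^!},
\end{equation*}
the second after the reindexing $m=d-k+1$. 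Hence the $(i,j)$-entry of $\widehat{\psi}^{j-i}$ equals $(q-q^{-1})^{2(j-i)}[j]_q^![d-i]_q^!/([i]_q^![d-j]_q^!)$.

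Finally I would assemble the surviving term. The coefficient of $\widehat{\psi}^{\,n}$ in $\exp_q(x\widehat{\psi})$ is $x^n q^{\binom{n}{2}}/[n]_q^!$ by \eqref{eq:qexpdef}, while in $\exp_{q^{-1}}(x\widehat{\psi})$ it is $x^n q^{-\binom{n}{2}}/[n]_q^!$, since $\exp_{q^{-1}}$ is $\exp_q$ with $q$ replaced by $q^{-1}$ and $[n]_{q^{-1}}^!=[n]_q^!$. Multiplying this coefficient (at $n=j-i$) by the $(i,j)$-entry of $\widehat{\psi}^{j-i}$ found above produces exactly the two displayed formulas, which differ only in the sign of the exponent of $q$ in $q^{\pm\binom{j-i}{2}}$.

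This is a direct computation with no real obstacle; the only point needing care is the second telescoping product, where the argument $d-k+1$ decreases as $k$ increases, so the surviving factorials are $[d-i]_q^!$ (from the smallest index $k=i+1$) over $[d-j]_q^!$. As a sanity check, the $n=0$ term gives diagonal entries equal to $1$, and the $n=1$ term reproduces the $(i,i+1)$-entry $x\,c_{i+1}=x(q-q^{-1})^2[i+1]_q[d-i]_q$, consistent with $\widehat{\psi}$ itself scaled by $x$.
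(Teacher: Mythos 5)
Your computation is correct and complete: the paper states this lemma without proof, treating it as the routine calculation you have carried out, and your argument supplies exactly that calculation. In particular the superdiagonal structure of $\widehat{\psi}^{\,n}$, the telescoping of $\prod_{k=i+1}^{j}[k]_q$ and $\prod_{k=i+1}^{j}[d-k+1]_q$, and the identification of the $\exp_{q^{-1}}$ coefficients via $[n]_{q^{-1}}^!=[n]_q^!$ are all handled correctly, so nothing is missing.
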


\begin{lemma}\label{lemma:trans1}
The transition matrices between the basis  $\{w_i\}_{i=0}^d$ and the bases $\{u_i\}_{i=0}^d$, $\{u_i^\Downarrow\}_{i=0}^d$ are given in the table below. 
\begin{center}
\begin{tabular}{ c |c |c }
 \text{\ \ \ From \ \ \ } & \text{\ \ \ To \ \ \ } & \text{\ Transition Matrix\ }\\
 \hline
 $\{u_i\}_{i=0}^d$& $\{w_i\}_{i=0}^d$ &  $\exp_{q^{-1}}\left(-\frac{a^{-1}}{q-q^{-1}}\widehat\psi\right)$\\
 $\{w_i\}_{i=0}^d$ &$ \{u_i\}_{i=0}^d$ & $\exp_q\left(\frac{a^{-1}}{q-q^{-1}}\widehat\psi\right)$\\
 $\{u_i^\Downarrow\}_{i=0}^d$ & $\{w_i\}_{i=0}^d$ & $\exp_{q^{-1}}\left(-\frac{a}{q-q^{-1}}\widehat\psi\right)$\\
  $\{w_i\}_{i=0}^d$ & $\{u_i^\Downarrow\}_{i=0}^d$ & $\exp_q\left(\frac{a}{q-q^{-1}}\widehat\psi\right)$\\
 \end{tabular}
\end{center}
\end{lemma}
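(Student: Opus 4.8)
The plan is to read each of the four transition matrices directly off the four scalar identities in Lemma \ref{lemma:uw}, using the elementary fact that, for a fixed basis, passing from an operator to the matrix representing it is an algebra homomorphism. First I would record the following general principle: for any basis $\{v_i\}_{i=0}^d$ of $V$, the map ${\rm End}(V)\to{\rm Mat}_{d+1}(\mathbb{K})$ sending each $T$ to the matrix that represents it with respect to $\{v_i\}_{i=0}^d$ is a $\mathbb{K}$-algebra homomorphism; in particular, if a matrix $X$ represents $T$, then $p(X)$ represents $p(T)$ for every polynomial $p$. Since $\psi$ is nilpotent by Lemma \ref{lemma:psiU} and $\widehat\psi$ is strictly upper triangular (hence $\widehat\psi^{\,d+1}=0$), the $q$-exponentials in \eqref{eq:qexpdef} and \eqref{eq:qexpinv} are in fact polynomials in $\psi$ and in $\widehat\psi$, so this principle applies to them.

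Next I would combine this with Proposition \ref{prop:matpsi}, which asserts that $\widehat\psi$ represents $\psi$ with respect to each of $\{u_i\}_{i=0}^d$, $\{u_i^\Downarrow\}_{i=0}^d$, $\{w_i\}_{i=0}^d$. It follows that, with respect to $\{w_i\}_{i=0}^d$, the matrix $\exp_q(\frac{a^{-1}}{q-q^{-1}}\widehat\psi)$ represents $\exp_q(\frac{a^{-1}}{q-q^{-1}}\psi)$, and likewise each of the four exponentials occurring in Lemma \ref{lemma:uw} is represented by the corresponding exponential in $\widehat\psi$, each with respect to the appropriate one of the three bases.

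Finally I would treat the four rows of the table one at a time. For instance, for the row ``from $\{w_i\}_{i=0}^d$ to $\{u_i\}_{i=0}^d$'', the equation $u_j=\exp_q(\frac{a^{-1}}{q-q^{-1}}\psi)\,w_j$ of Lemma \ref{lemma:uw}, together with the fact that $\exp_q(\frac{a^{-1}}{q-q^{-1}}\widehat\psi)$ represents that exponential with respect to $\{w_i\}_{i=0}^d$, gives $u_j=\sum_{i=0}^d[\exp_q(\frac{a^{-1}}{q-q^{-1}}\widehat\psi)]_{ij}\,w_i$, which is exactly the assertion that $\exp_q(\frac{a^{-1}}{q-q^{-1}}\widehat\psi)$ is the transition matrix from $\{w_i\}_{i=0}^d$ to $\{u_i\}_{i=0}^d$. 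The remaining three rows follow identically, each from the matching equation in Lemma \ref{lemma:uw} and the matching basis in Proposition \ref{prop:matpsi}. The only ``hard part'' here is purely bookkeeping: for each of the four exponentials one must keep straight which of the three bases $\widehat\psi$ is being interpreted against and align this with the From/To direction of the table; I anticipate no genuine computational difficulty.
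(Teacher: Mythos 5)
Your proposal is correct and follows exactly the paper's route: the paper's proof is the one-line instruction to combine Lemma \ref{lemma:uw} with Proposition \ref{prop:matpsi}, which is precisely what you do, just with the bookkeeping (the representation map being an algebra homomorphism applied to the polynomial $q$-exponentials in the nilpotent $\psi$) written out explicitly.
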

\begin{proof}
Use Lemma \ref{lemma:uw} and Proposition \ref{prop:matpsi}.
\end{proof}

We next consider the product 
\begin{equation}
\exp_q\left(\frac{a}{q-q^{-1}}\widehat\psi\right)\exp_{q^{-1}}\left(-\frac{a^{-1}}{q-q^{-1}}\widehat\psi\right).\label{eq:matpsiexp}
\end{equation}
The inverse of \eqref{eq:matpsiexp} is 
\begin{equation}
\exp_q\left(\frac{a^{-1}}{q-q^{-1}}\widehat\psi\right)\exp_{q^{-1}}\left(-\frac{a}{q-q^{-1}}\widehat\psi\right).
\label{eq:matpsiexpinv}
\end{equation}
The matrices in \eqref{eq:matpsiexp},\eqref{eq:matpsiexpinv}
are upper triangular. \\

\begin{lemma}
The transition matrices between the bases $\{u_i\}_{i=0}^d$, $\{u_i^\Downarrow\}_{i=0}^d$ are given in the table below. 
\begin{center}
\begin{tabular}{ c |c |c }
 \text{\ \ \ From \ \ \ } & \text{\ \ \ To \ \ \ } & \text{\ Transition Matrix\ }\\
 \hline
 $\{u_i\}_{i=0}^d$ & $\{u^\Downarrow_i\}_{i=0}^d$ & $\exp_q\left(\frac{a}{q-q^{-1}}\widehat\psi\right)\exp_{q^{-1}}\left(-\frac{a^{-1}}{q-q^{-1}}\widehat\psi\right) $\\
 $\{u^\Downarrow_i\}_{i=0}^d$ & $\{u_i\}_{i=0}^d$ & $\exp_q\left(\frac{a^{-1}}{q-q^{-1}}\widehat\psi\right)\exp_{q^{-1}}\left(-\frac{a}{q-q^{-1}}\widehat\psi\right)$ 
 \end{tabular}
\end{center}
\end{lemma}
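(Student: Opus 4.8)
The plan is to read off both transition matrices directly from the factorization of $\Delta$ in Theorem \ref{thm:Deltaqexp}, using $u_i^\Downarrow=\Delta u_i$ together with the matrix representation of $\psi$. First I would pin down the convention, verified against Lemma \ref{lemma:trans1}: the transition matrix from a basis $\{x_i\}_{i=0}^d$ to a basis $\{y_i\}_{i=0}^d$ is the matrix $P$ with $y_j=\sum_{i=0}^d P_{ij}x_i$, so that $P$ is exactly the matrix representing, with respect to $\{x_i\}_{i=0}^d$, the operator that sends each $x_j$ to $y_j$.

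For the top row, Lemma \ref{lemma:uddDeltau} gives $u_i^\Downarrow=\Delta u_i$ for $0\leq i\leq d$, so the transition matrix from $\{u_i\}_{i=0}^d$ to $\{u_i^\Downarrow\}_{i=0}^d$ is precisely the matrix representing $\Delta$ with respect to $\{u_i\}_{i=0}^d$. By Theorem \ref{thm:Deltaqexp}, $\Delta=\exp_q(\frac{a}{q-q^{-1}}\psi)\exp_{q^{-1}}(-\frac{a^{-1}}{q-q^{-1}}\psi)$. Since $T\mapsto$ (its matrix with respect to a fixed basis) is an algebra homomorphism, and since $\psi$ is nilpotent by Lemma \ref{lemma:psiU} so that both exponentials are finite sums in $\psi$, a polynomial in $\psi$ is represented by the same polynomial in the matrix of $\psi$. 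By Proposition \ref{prop:matpsi}, $\widehat\psi$ represents $\psi$ with respect to $\{u_i\}_{i=0}^d$; hence $\Delta$ is represented with respect to $\{u_i\}_{i=0}^d$ by $\exp_q(\frac{a}{q-q^{-1}}\widehat\psi)\exp_{q^{-1}}(-\frac{a^{-1}}{q-q^{-1}}\widehat\psi)$, which is \eqref{eq:matpsiexp}. This yields the first row.

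For the second row I would invoke that the transition matrix in the reverse direction is the inverse of the one just found. The text immediately preceding the lemma already records that \eqref{eq:matpsiexpinv} is the inverse of \eqref{eq:matpsiexp}, so the transition matrix from $\{u_i^\Downarrow\}_{i=0}^d$ to $\{u_i\}_{i=0}^d$ equals \eqref{eq:matpsiexpinv}, as claimed. Alternatively, the same conclusion follows by the direct argument: $u_i=\Delta^{-1}u_i^\Downarrow$ by Lemma \ref{lemma:uddDeltau}, and since $\widehat\psi$ also represents $\psi$ with respect to $\{u_i^\Downarrow\}_{i=0}^d$ by Proposition \ref{prop:matpsi}, the factorization of $\Delta^{-1}$ in Theorem \ref{thm:Deltaqexp} shows that $\Delta^{-1}$ is represented with respect to $\{u_i^\Downarrow\}_{i=0}^d$ by \eqref{eq:matpsiexpinv}.

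The only point requiring genuine care — and the most likely source of a sign or ordering error — is the bookkeeping of conventions: confirming the direction convention against Lemma \ref{lemma:trans1}, and, in the direct version of the second row, remembering to compute the matrix of $\Delta^{-1}$ with respect to $\{u_i^\Downarrow\}_{i=0}^d$ rather than $\{u_i\}_{i=0}^d$, which is legitimate precisely because Proposition \ref{prop:matpsi} supplies the same matrix $\widehat\psi$ for $\psi$ in both bases. I would also note that the two exponential factors commute, being power series in the single matrix $\widehat\psi$, so the factor order is immaterial; this also makes the composition of the two rows of Lemma \ref{lemma:trans1} through $\{w_i\}_{i=0}^d$ a consistent independent check.
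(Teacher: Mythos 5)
Your proof is correct, but it takes a different route from the paper's. The paper proves this lemma in one line by chaining the two transitions already recorded in Lemma \ref{lemma:trans1}: from $\{u_i\}_{i=0}^d$ to $\{w_i\}_{i=0}^d$ via $\exp_{q^{-1}}\bigl(-\frac{a^{-1}}{q-q^{-1}}\widehat\psi\bigr)$ and then from $\{w_i\}_{i=0}^d$ to $\{u_i^\Downarrow\}_{i=0}^d$ via $\exp_q\bigl(\frac{a}{q-q^{-1}}\widehat\psi\bigr)$, the two factors commuting as power series in the single matrix $\widehat\psi$ (a point you also flag). You instead go through $u_i^\Downarrow=\Delta u_i$ (Lemma \ref{lemma:uddDeltau}), the factorization of $\Delta$ in Theorem \ref{thm:Deltaqexp}, and Proposition \ref{prop:matpsi}; in effect you re-derive the content of the paper's Lemma \ref{lemma:DeltaMat} (the matrix representing $\Delta$ in these bases) and read the transition matrix off from it. Both arguments are sound and rest on the same underlying facts; the paper's is more economical given that Lemma \ref{lemma:trans1} is already in hand, while yours makes the identification of the transition matrix with the matrix of $\Delta$ explicit, which is arguably the more illuminating statement. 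Your care with the direction convention ($y_j=\sum_i P_{ij}x_i$, checked against Lemma \ref{lemma:trans1}) and with computing $\Delta^{-1}$ in the basis $\{u_i^\Downarrow\}_{i=0}^d$ is exactly the right bookkeeping.
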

\begin{proof}
Use Lemma \ref{lemma:trans1}.
\end{proof}

\begin{lemma}\label{lemma:DeltaMat}
With respect to each of the bases $\{u_i\}_{i=0}^d, \{u_i^\Downarrow\}_{i=0}^d, \{w_i\}_{i=0}^d$, the matrices that represent $\Delta$ and $\Delta^{-1}$ are $\exp_q\left(\frac{a}{q-q^{-1}}\widehat\psi\right)\exp_{q^{-1}}\left(-\frac{a^{-1}}{q-q^{-1}}\widehat\psi\right)$
and
$\exp_q\left(\frac{a^{-1}}{q-q^{-1}}\widehat\psi\right)\exp_{q^{-1}}\left(-\frac{a}{q-q^{-1}}\widehat\psi\right)$
respectively.
\end{lemma}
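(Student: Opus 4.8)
The plan is to combine the operator factorization of $\Delta$ from Theorem \ref{thm:Deltaqexp} with the fact, recorded in Proposition \ref{prop:matpsi}, that the single matrix $\widehat\psi$ represents $\psi$ with respect to all three bases $\{u_i\}_{i=0}^d$, $\{u_i^\Downarrow\}_{i=0}^d$, $\{w_i\}_{i=0}^d$ simultaneously. The guiding observation is that fixing any basis $\{v_i\}_{i=0}^d$ of $V$ determines a $\mathbb{K}$-algebra isomorphism ${\rm End}(V)\to{\rm Mat}_{d+1}(\mathbb{K})$ that sends each $T$ to the matrix representing $T$. Being an algebra homomorphism, this map sends any polynomial in $\psi$ to the very same polynomial evaluated at the matrix that represents $\psi$.

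First I would recall from Lemma \ref{lemma:psiU} that $\psi$ is nilpotent with $\psi^{d+1}=0$. Consequently each of $\exp_q(\frac{a}{q-q^{-1}}\psi)$ and $\exp_{q^{-1}}(-\frac{a^{-1}}{q-q^{-1}}\psi)$ is a genuine polynomial in $\psi$ of degree at most $d$, so that the right-hand side of \eqref{eq:Delta} is a polynomial expression in $\psi$. I would then apply the representation homomorphism attached to a chosen basis: by Proposition \ref{prop:matpsi}, $\psi$ is represented by $\widehat\psi$ with respect to that basis, hence every occurrence of $\psi$ in \eqref{eq:Delta} is replaced by $\widehat\psi$, and $\Delta$ is represented by $\exp_q(\frac{a}{q-q^{-1}}\widehat\psi)\exp_{q^{-1}}(-\frac{a^{-1}}{q-q^{-1}}\widehat\psi)$. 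Since $\widehat\psi$ represents $\psi$ with respect to each of the three bases, this single matrix works for all three at once.

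The assertion for $\Delta^{-1}$ follows identically by starting from \eqref{eq:Deltainv} in place of \eqref{eq:Delta}, yielding the representing matrix $\exp_q(\frac{a^{-1}}{q-q^{-1}}\widehat\psi)\exp_{q^{-1}}(-\frac{a}{q-q^{-1}}\widehat\psi)$. There is essentially no hard step here: the entire content is the bookkeeping remark that passing to matrices is an algebra homomorphism, that the $q$-exponentials are polynomials in the nilpotent operator $\psi$, and that Proposition \ref{prop:matpsi} supplies a common matrix $\widehat\psi$ for $\psi$ relative to all three bases. The only point deserving care is emphasizing that it is precisely this shared representation of $\psi$ that allows one fixed pair of matrices to represent $\Delta$ and $\Delta^{-1}$ simultaneously with respect to $\{u_i\}_{i=0}^d$, $\{u_i^\Downarrow\}_{i=0}^d$, and $\{w_i\}_{i=0}^d$.
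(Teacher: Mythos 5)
Your proposal is correct and takes essentially the same route as the paper: the paper's proof is the one-line ``Use Theorem \ref{thm:Deltapoly} and Proposition \ref{prop:matpsi},'' i.e.\ exactly the observation that $\Delta^{\pm 1}$ is a polynomial in the nilpotent operator $\psi$ and that $\widehat\psi$ represents $\psi$ with respect to all three bases. The only cosmetic difference is that you start from the factored form in Theorem \ref{thm:Deltaqexp} while the paper cites the power-series form, but these are identified by Corollary \ref{cor:prodsum}, so the content is the same.
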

\begin{proof}
	Use Theorem \ref{thm:Deltapoly} and Proposition \ref{prop:matpsi}.
\end{proof}
\medskip

We give the entries of the matrices representing $\Delta,\Delta^{-1}$ in the following lemma. 

\begin{lemma}
The matrix in \eqref{eq:matpsiexp} is upper triangular. 
For $0\leq i\leq j\leq d$, the $(i,j)$-entry of \eqref{eq:matpsiexp} is
\begin{equation}
 \frac{\left(q-q^{-1}\right)^{j-i} [j]_q^![d-i]_q^! }{[i]_q^![j-i]_q^![d-j]_q^! } \ \prod_{n=1}^{j-i} \left(aq^{n-1}-a^{-1}q^{1-n}\right).
\end{equation}
The matrix in \eqref{eq:matpsiexpinv} is upper triangular. 
For $0\leq i\leq j\leq d$, the $(i,j)$-entry of \eqref{eq:matpsiexpinv} is
\begin{equation}
\frac{\left(q-q^{-1}\right)^{j-i} [j]_q^![d-i]_q^! }{[i]_q^![j-i]_q^![d-j]_q^! } \ \prod_{n=1}^{j-i} \left(a^{-1}q^{n-1}-aq^{1-n}\right).
\end{equation}
\end{lemma}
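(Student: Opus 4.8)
The plan is to identify the matrix in \eqref{eq:matpsiexp} with a polynomial in the nilpotent superdiagonal matrix $\widehat\psi$ and then read off its entries one superdiagonal at a time. By Lemma \ref{lemma:DeltaMat} the matrix in \eqref{eq:matpsiexp} represents $\Delta$ with respect to, say, $\{u_i\}_{i=0}^d$, and by Theorem \ref{thm:Deltapoly} we have $\Delta=\sum_{k=0}^d c_k\psi^k$ with $c_k=\prod_{m=1}^k\frac{aq^{m-1}-a^{-1}q^{1-m}}{q^m-q^{-m}}$. Since $\widehat\psi$ represents $\psi$ by Proposition \ref{prop:matpsi}, the matrix $\sum_{k=0}^d c_k\widehat\psi^k$ also represents $\Delta$; by uniqueness of the representing matrix with respect to a fixed basis, the matrix in \eqref{eq:matpsiexp} equals $\sum_{k=0}^d c_k\widehat\psi^k$. (Alternatively one may substitute $\widehat\psi$ into the first identity of Corollary \ref{cor:prodsum}, which is legitimate since both sides are polynomials in a nilpotent argument.)

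Next I would compute the powers of $\widehat\psi$. From Definition \ref{def:psihat}, $\widehat\psi$ is supported on the first superdiagonal, so $\widehat\psi^k$ is supported on the $k$-th superdiagonal; in particular every $\widehat\psi^k$, and hence the whole matrix, is upper triangular, and its $(i,j)$-entry is simply $c_{j-i}(\widehat\psi^{j-i})_{i,j}$. For $j-i=k$, the entry $(\widehat\psi^k)_{i,j}$ is the product of the superdiagonal entries $\widehat\psi_{l-1,l}$ over $l=i+1,\ldots,j$, namely $\prod_{l=i+1}^j (q^l-q^{-l})(q^{d-l+1}-q^{l-d-1})$.

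The final step is to rewrite this product using $q$-factorials and combine it with $c_k$. Applying $q^n-q^{-n}=(q-q^{-1})[n]_q$ to each factor, each superdiagonal entry becomes $(q-q^{-1})^2[l]_q[d-l+1]_q$, whence $(\widehat\psi^k)_{i,j}=(q-q^{-1})^{2k}\,\frac{[j]_q^!}{[i]_q^!}\cdot\frac{[d-i]_q^!}{[d-j]_q^!}$; here the first ratio is $\prod_{l=i+1}^j[l]_q$ and the second is the reversed product $\prod_{l=i+1}^j[d-l+1]_q=[d-i]_q[d-i-1]_q\cdots[d-j+1]_q$. Multiplying by $c_k$, whose denominator $\prod_{m=1}^k(q^m-q^{-m})$ equals $(q-q^{-1})^k[k]_q^!$, the powers of $(q-q^{-1})$ collapse to $(q-q^{-1})^{j-i}$ and the factorials assemble into $\frac{[j]_q^![d-i]_q^!}{[i]_q^![j-i]_q^![d-j]_q^!}$, while the numerator product $\prod_{n=1}^{j-i}(aq^{n-1}-a^{-1}q^{1-n})$ is carried along unchanged. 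This gives the asserted entry of \eqref{eq:matpsiexp}. The entry of \eqref{eq:matpsiexpinv} then follows verbatim, using instead the second identity of Theorem \ref{thm:Deltapoly} (equivalently Corollary \ref{cor:prodsum}), which only swaps $a$ and $a^{-1}$ in the numerator product.

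The argument is almost entirely mechanical; the one place demanding care is the bookkeeping of the reversed product, where the index $d-l+1$ decreases as $l$ increases, so one must verify that $\prod_{l=i+1}^j[d-l+1]_q$ telescopes to $[d-i]_q^!/[d-j]_q^!$ and not to its reciprocal or an off-by-one variant. I expect that reindexing, together with correctly tracking the accumulated powers of $(q-q^{-1})$ from the three sources (the entries of $\widehat\psi$, the factor $[k]_q^!$ in the denominator of $c_k$, and the overall normalization), to be the only—and rather modest—obstacle.
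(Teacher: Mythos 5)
Your proposal is correct and follows essentially the same route as the paper, whose proof is simply ``Use Corollary \ref{cor:prodsum} and Proposition \ref{prop:matpsi}'': identify the matrix in \eqref{eq:matpsiexp} with $\sum_{k}c_k\widehat\psi^{\,k}$ and read off the entries of the powers of the superdiagonal matrix $\widehat\psi$. Your bookkeeping of the reversed product $\prod_{l=i+1}^{j}[d-l+1]_q=[d-i]_q^!/[d-j]_q^!$ and of the powers of $(q-q^{-1})$ checks out.
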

\begin{proof}
Use Corollary \ref{cor:prodsum} and Proposition \ref{prop:matpsi}.
\end{proof}

We finish the paper by giving the matrix that represents $A$ with respect to $\{w_i\}_{i=0}^d$.

\begin{lemma}
We give the matrix in ${\rm Mat}_{d+1}(\mathbb{K})$ that represents $A$ with respect to $\{w_i\}_{i=0}^d$.  
For $1\leq i\leq d$, the $(i,i-1)$-entry is 1.
For $0\leq i\leq d$, the $(i,i)$-entry is $(a+a^{-1})q^{d-2i}$.  For $1\leq i\leq d$, the $(i-1,i)$-entry is $$-q^{d-2i+1}(q^i-q^{-i})(q^{d-i+1}-q^{i-d-1}).$$  All other entries are zero.
\end{lemma}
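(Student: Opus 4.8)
The plan is to determine the three nonzero diagonals separately and to show that every remaining entry vanishes. Throughout I would use that $\{w_i\}_{i=0}^d$ induces $\{W_i\}_{i=0}^d$, that each $W_i$ is one-dimensional (since $\rho_i=1$), and the already-known actions $\mathcal{M}^{-1}w_i=q^{2i-d}w_i$ (by Definition \ref{def:W}) and $\psi w_i=(q^i-q^{-i})(q^{d-i+1}-q^{i-d-1})w_{i-1}$ (by Proposition \ref{prop:matpsi} and Definition \ref{def:psihat}). First I would invoke Lemma \ref{lemma:AW}: since $(A-(a+a^{-1})q^{d-2i}I)W_i\subseteq W_{i-1}+W_{i+1}$ and each $W_j$ is one-dimensional, the matrix representing $A$ with respect to $\{w_i\}_{i=0}^d$ is tridiagonal and its $(i,i)$-entry is forced to be $(a+a^{-1})q^{d-2i}$. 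This yields both the diagonal entries and the vanishing of all entries off the three central diagonals.

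For the superdiagonal I would apply Lemma \ref{lemma:qAM^} to $w_i$, namely
\[
\frac{qA\mathcal{M}^{-1}-q^{-1}\mathcal{M}^{-1}A}{q-q^{-1}}\,w_i=\bigl((a+a^{-1})I-(q+q^{-1})\psi\bigr)w_i,
\]
and then extract the $W_{i-1}$-component of both sides. Writing the unknown $w_{i-1}$-coefficient of $Aw_i$ as $\alpha_i$ and using $\mathcal{M}^{-1}w_i=q^{2i-d}w_i$ together with $\mathcal{M}^{-1}w_{i-1}=q^{2i-2-d}w_{i-1}$, the left side contributes $q^{2i-d-1}(q+q^{-1})\,\alpha_i$, while the right side contributes $-(q+q^{-1})(q^i-q^{-i})(q^{d-i+1}-q^{i-d-1})$. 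Solving the resulting scalar equation (the factor $q+q^{-1}$ is nonzero because $q^4\neq1$) gives exactly $\alpha_i=-q^{d-2i+1}(q^i-q^{-i})(q^{d-i+1}-q^{i-d-1})$, the claimed $(i-1,i)$-entry. As consistency checks, the $W_i$-component of the same identity reproduces the diagonal entry already found, and its $W_{i+1}$-component reduces to the trivial identity $0=0$.

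The subdiagonal entries are the subtle point and I expect them to be the main obstacle, precisely because that last $W_{i+1}$-component gives $0=0$, so Lemma \ref{lemma:qAM^} carries no information about them. To pin them down I would compare with the basis $\{u_i\}_{i=0}^d$. By Lemma \ref{lemma:uw}, $u_i=\exp_q(\frac{a^{-1}}{q-q^{-1}}\psi)w_i$, and since $\exp_q(\cdot)-I$ is a polynomial in $\psi$ with zero constant term, Lemma \ref{lemma:psiW} gives $u_i-w_i\in W_0+W_1+\cdots+W_{i-1}$; in particular the $W_i$-component of $u_i$ is $w_i$. Now fix $j$ with $0\le j\le d-1$. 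Since $w_j-u_j\in W_0+\cdots+W_{j-1}$ and $A$ maps $W_0+\cdots+W_{j-1}$ into $W_0+\cdots+W_j$ by tridiagonality, the $W_{j+1}$-components of $Aw_j$ and $Au_j$ coincide. By \eqref{eq:Amatrices} we have $Au_j=\theta_j u_j+u_{j+1}$, where $u_j$ has no $W_{j+1}$-component while the $W_{j+1}$-component of $u_{j+1}$ is $w_{j+1}$. Hence the $W_{j+1}$-component of $Aw_j$ is $w_{j+1}$, i.e. the $(j+1,j)$-entry equals $1$; taking $i=j+1$ gives the asserted subdiagonal and completes the determination of all entries.
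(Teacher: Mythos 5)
Your proof is correct and follows essentially the same route as the paper: Lemma \ref{lemma:AW} for tridiagonality and the diagonal entries, Lemma \ref{lemma:qAM^} applied to $w_i$ for the superdiagonal, and comparison with the basis $\{u_i\}_{i=0}^d$ for the subdiagonal. Your component-wise argument that the $(j+1,j)$-entries equal $1$ is simply a more explicit unpacking of the paper's one-line claim that $\mathcal{A}=\exp_{q^{-1}}\left(-\frac{a^{-1}}{q-q^{-1}}\widehat\psi\right)\mathcal{A}'\exp_{q}\left(\frac{a^{-1}}{q-q^{-1}}\widehat\psi\right)$ forces the subdiagonal to be preserved under conjugation by unipotent upper-triangular matrices.
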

\begin{proof} Let $\mathcal{A}$ denote the matrix that represents $A$ with respect to $\{w_i\}_{i=0}^d$.  
By Lemma \ref{lemma:AW}, $\mathcal{A}$ is tridiagonal with $(i,i)$-entry 
given by $(a+a^{-1})q^{d-2i}$ for $0\leq i\leq d$. \smallskip

We now show that the subdiagonal entires of $\mathcal{A}$ are all 1.  Let $\mathcal{A'}$ denote the matrix that represents $A$ with respect to $\{u_i\}_{i=0}^d$.  Recall that this matrix is displayed on the left in \eqref{eq:Amatrices}.  
Observe that $\mathcal{A}$ is equal to $\exp_{q^{-1}}(-\frac{a^{-1}}{q-q^{-1}}\widehat\psi) \mathcal{A}' \exp_{q}(\frac{a^{-1}}{q-q^{-1}}\widehat\psi)$.  It follows from this fact that the subdiagonal entries of $\mathcal{A}$ are all 1. \smallskip

We next obtain the superdiagonal entries of $\mathcal{A}$.  Let $0\leq i\leq d$.  Apply both sides of  \eqref{eq:qAM^} to $w_i$.  Evaluate the result using Proposition \ref{prop:matpsi} and the fact that the $w_i$ is an eigenvector for $\mathcal{M}$ with eigenvalue $q^{2i-d}$. Analyze the result in light of the above comments concerning the entries of $\mathcal{A}$ to obtain the desired result. 
\end{proof}

%%%%%%%%%%%%%%%%%%%%%%%%%%%%%%%%%%%%%%%%%%%%%%

\bibliographystyle{amsplain}
\bibliography{master}

%%%%%%%%%%%%%%%%%%%%%%%%%%%%%%%%%%%%%%%%%%%%%%
\bigskip
\bigskip
\noindent Sarah Bockting-Conrad \hfil\break
\noindent Department of Mathematical Sciences \hfil\break
\noindent DePaul University \hfil\break
\noindent 2320 N. Kenmore Avenue\hfil\break
\noindent Chicago, IL 60614  USA \hfil\break
\noindent email: {\tt sarah.bockting@depaul.edu }\hfil\break

%%%%%%%%%%%%%%%%%%%%%%%%%%%%%%%%%%%%%%%%%%%%%%

%%%%%%%%%%%%%%%%%%%%%%%%%%%%%%%%%%%%%%%%%%%%%%

\end{document}